\documentclass[11pt]{article}
\usepackage{mathrsfs}
\usepackage{latexsym,amssymb,amsmath,amscd,amscd,amsthm,amsxtra,cancel,xypic}
\usepackage[utf8]{inputenc}
\usepackage[T1]{fontenc}
\usepackage{amssymb}
\usepackage{nicefrac,mathtools,enumitem}
\usepackage[colorinlistoftodos]{todonotes}
\usepackage{soul}
\usepackage{tikz}
\usepackage{xcolor}
\usepackage{geometry}
\def\dn{\mathrm{dn}}

\usepackage{times}
\usepackage{amsthm}
\usepackage{amsmath}
\usepackage{amsfonts}
\usepackage{amssymb}
\usepackage{color}
\usepackage{mathrsfs}
\usepackage{stmaryrd}
\SetSymbolFont{stmry}{bold}{U}{stmry}{m}{n}
\usepackage{pifont}
\usepackage{bm}

\usepackage{tikz-cd}
\usepackage{tikz}
\usepackage{graphicx}
\usepackage[percent]{overpic}
\usepackage{subcaption}

\newcommand{\be }{\begin{equation}}
	\newcommand{\ee }{\end{equation}}

\newtheorem{assumption}{Assumption}

\usepackage{thmtools}

\DeclareMathOperator{\re}{Re}

\declaretheoremstyle[spaceabove=0.25cm,spacebelow=0.25cm,notefont=\normalfont\bfseries, notebraces={(}{)}]{theorem}
\declaretheoremstyle[spaceabove=0.25cm,spacebelow=0.25cm,bodyfont=\normalfont,notefont=\normalfont\bfseries, notebraces={(}{)}]{noital}
\declaretheoremstyle[spaceabove=0.25cm,spacebelow=0.25cm,bodyfont=\normalfont\color{darkgreen},notefont=\normalfont\bfseries, notebraces={(}{)}]{green}
\declaretheoremstyle[spaceabove=0.25cm,spacebelow=0.25cm,bodyfont=\normalfont,notefont=\normalfont\bfseries,qed=$\qedsymbol$,notebraces={(}{)}]{proofstyle}

\declaretheorem[name=Theorem,numberwithin=section,style=theorem]{thm}

\declaretheorem[name=Proposition,sibling=thm,style=theorem]{pro}

\declaretheorem[name=Lemma,sibling=thm,style=theorem]{lem}

\declaretheorem[name=Remark,sibling=thm,style=theorem]{rmk}
\declaretheorem[name=Riemann-Hilbert Problem,sibling=thm,style=theorem]{RHP}

\definecolor{lightblue}{rgb}{0.68, 0.85, 0.9}
\definecolor{lightred}{rgb}{1.0, 0.8, 0.8}
\definecolor{lightgreen}{rgb}{0.8, 1.0, 0.8}
\definecolor{darkgreen}{rgb}{0.0, 0.5, 0.0}
\definecolor{color12lines}{rgb}{0.5, 0.0, 0.0}
\definecolor{color13lines}{rgb}{0.0, 0.5, 0.0}
\definecolor{color23lines}{rgb}{0.0, 0.0, 0.6}
\definecolor{darkgreen}{rgb}{0.0, 0.39, 0.0}
\definecolor{lightgray}{rgb}{0.75, 0.75, 0.75}

\tikzset{
	branchpoint/.style={orange, thick, mark=x, mark options={orange, line width=1.25pt}}
}

\tikzset{
	singularpoint/.style={blue, mark=*, mark options={blue, mark size=1.25pt}}
}

\tikzset{
	stokeslabel/.style={gray!95,font=\tiny}
}

\tikzset{
	cutlabel/.style={orange,font=\tiny}
}

\tikzset{
	sheetlabel/.style={font=\small}
}

\tikzset{
	branchcut/.style={orange,dashed,semithick}
}

\tikzset{
	disccolor/.style={gray!12}
}

\tikzset{
	wall/.style={black,thick}
}

\tikzset{
	path/.style={thick,rounded corners}
}

\tikzset{
	witharrow/.style={
		decoration={markings, mark=at position #1 with {\arrow[sloped]{Latex}}},
		postaction={decorate}
	}
}

\tikzset{
	antistokesmark/.style={semithick, black, radius=1.5pt, fill=yellow}
}

\tikzset{
	withbackgroundrectangle/.style={show background rectangle, background rectangle/.style={fill=gray!7}}
}

\numberwithin{equation}{section}

\usepackage[
hypertexnames=false,
colorlinks=true,
pdfstartview=FitV,
linkcolor=blue,
citecolor=blue,
urlcolor=blue
]{hyperref}

\title{\bf The Direct Scattering Problem for the Defocusing Nonlinear Schr\"odinger Equation with Step-Like Periodic Background}

\date{}

\begin{document}
	
	\maketitle

	\begin{center}
		Dinghao Zhu$^{\dagger, \ddagger}$ \footnote{dhzhu@mail.bnu.edu.cn}
		
		\bigskip
		\begin{minipage}{0.7\textwidth}
			\begin{small}
				\begin{enumerate}
					\item[$^{\dagger}$] {\it School of Mathematical Sciences, Beijing Normal University,\\ Beijing 100875, China}
					\item[$^{\ddagger}$] {\it Institut de Recherche en Math\'ematique et Physique, UCLouvain\\ Chemin du Cyclotron2,
						B-1348 Louvain-La-Neuve, Belgium}
				\end{enumerate}
			\end{small}
		\end{minipage}
	\end{center}
	
	\begin{abstract}
		We consider the defocusing nonlinear Schr\"odinger equation for step-like data connecting two, in general different, genus-one periodic states. Starting from the Jost solutions associated with the left and right backgrounds, we determine the corresponding scattering data and reorganize them through a scalar conjugation and a suitable symmetrization. The resulting formulation identifies the inverse problem with a full dark-soliton gas Riemann--Hilbert problem supplemented by a radiative jump on the real axis. In this way, the effects of the two periodic backgrounds are separated, and a precise link is established between step-like periodic scattering and the full-gas construction. Assuming the absence of discrete eigenvalues and appropriate regularity of the scattering data, we establish the existence and uniqueness of the associated Riemann--Hilbert problem.
	\end{abstract}
	\setcounter{tocdepth}{1}
	\tableofcontents
	\section{Introduction}
	We consider the defocusing nonlinear Schr\"{o}dinger (dNLS) equation
	\begin{equation}\label{eq:dNLS}
		i  u_t+ u_{xx}- 2(|u|^2  - 1)u=0,
	\end{equation}
	with initial step-like periodic wave data
	\begin{equation}\label{initial-periodic}
		\begin{cases}
			u(x,0) - u_0^{\ell}(x,0) \longrightarrow 0, & x\to -\infty, \\
			u(x,0) - u_0^{r}(x,0) \longrightarrow 0, & x\to +\infty,
		\end{cases}
	\end{equation}
	where, for $s\in\{\ell,r\}$, $u_0^s(x,0)$ denotes the initial profile of the genus-one finite-gap solution $u_0^s(x,t)$ given by \eqref{eq:u0}, which can be written in the form of a classical traveling-wave solution \eqref{eq:classical-trav} of the dNLS equation.
	\par 
	The dNLS equation admits a rich class of nondecaying solutions under finite-density boundary conditions. Besides dark solitons, it possesses periodic finite-gap solutions whose amplitudes can be expressed in terms of elliptic functions. Such solutions play an important role in the modulation theory and stability analysis of the dNLS equation; see, for example, \cite{BottmanDeconinckNivala2011,ForestLee1986,GallayHaragus2007}.  
	\par
	The inverse-scattering and Riemann--Hilbert analysis of the dNLS equation with nonzero or step-like plane-wave backgrounds has been developed in a number of works; as discussed in \cite{BiondiniFagerstromPrinari2016,DemontisPrinariVanderMeeVitale2013,FrommLenellsQuirchmayr2025}. Even for genus-zero asymptotic states, the long-time dynamics may generate modulated elliptic-wave regions, as shown through Whitham theory and nonlinear steepest descent in \cite{BiondiniKodama2006,Jenkins2015}. Periodic-background scattering for the dNLS equation was further studied in \cite{BiondiniZhang}. These results naturally lead to the question of how the scattering theory changes when the two spatial infinities are governed by different periodic waves.
	\par
	Related step-like finite-gap problems have also been investigated for the KdV equation in \cite{BoutetDeMonvelEgorovaTeschl2008,EgorovaGrunertTeschl2009,EgorovaTeschl2011,Zhu2026}. For the focusing NLS equation, a direct-scattering construction with two elliptic backgrounds and its connection with a full soliton-gas Riemann--Hilbert problem (RHP) was recently developed in \cite{GravaJenkinsZhangZhang2026}. These works provide the main background for the defocusing step-like elliptic problem considered here.
	\par 
	The dNLS equation admits the following Lax-pair representation \cite{Zakharov-Shabat-1973}:
	\begin{equation}\label{Laxpair}
		\Phi_x=\mathcal{L}\Phi, \qquad 
		i\Phi_t=\mathcal{B}\Phi,
	\end{equation}
	where the $2\times2$ matrices $\mathcal{L}$ and $\mathcal{B}$ are defined by
	\begin{equation*}
		\mathcal{L}=i\sigma_3(Q-kI),\qquad 
		\mathcal{B}=-2ik\mathcal{L}-(Q^2-I)\sigma_3+iQ_x.
	\end{equation*}
	Here
	\begin{equation*}
		Q=Q(x,t)=
		\begin{pmatrix}
			0&\overline{u(x,t)}\\
			u(x,t)&0
		\end{pmatrix},
		\qquad
		I=
		\begin{pmatrix}
			1&0\\
			0&1
		\end{pmatrix},
	\end{equation*}
	and $\sigma_1,\sigma_2,\sigma_3$ denote the Pauli matrices
	\begin{equation*}
		\sigma_1=
		\begin{pmatrix}
			0&1\\
			1&0
		\end{pmatrix},
		\qquad
		\sigma_2=
		\begin{pmatrix}
			0&-i\\
			i&0
		\end{pmatrix},
		\qquad
		\sigma_3=
		\begin{pmatrix}
			1&0\\
			0&-1
		\end{pmatrix}.
	\end{equation*}
	\par 
	For the dNLS equation with symmetric nonzero boundary conditions, the two-sheeted spectral surface can be uniformized by the Joukowski variable 
	\begin{equation*}
		k = k(z) := \frac{z+z^{-1}}{2},
	\end{equation*}
	introduced in the monograph of Faddeev and Takhtajan \cite{FaddeevTakhtajan1987}. This formulation was subsequently used and developed in the inverse-scattering analyses of vector, multicomponent, and scalar dNLS systems in \cite{PrinariAblowitzBiondini2006,PrinariBiondiniTrubatch2011,DemontisPrinariVanderMeeVitale2013}, and later in the long-time analysis of \cite{Jenkins2016}.
	\par 
	In this paper, we study the direct and inverse scattering problems for the dNLS equation with step-like periodic initial data. Starting from the scattering data associated with the two asymptotic elliptic backgrounds, we construct the corresponding RHP and, through a nontrivial transformation, reduce it to a full dark-soliton gas RHP \cite{YanGengChen2026} with an additional radiative component. This establishes a direct connection between step-like periodic scattering and the full-gas framework \cite{DyachenkoZakharovZakharov2016}, and provides a natural starting point for the analysis of the long-time behavior of the resulting solutions.
	\par 
	Hereafter, for \(n\in\mathbb N_0\) and \(1\leq p\leq\infty\), we denote by
	\begin{equation*}
		W^{n,p}(\mathbb R)
		:=
		\left\{
		f\in L^p(\mathbb R):
		\partial_x^j f\in L^p(\mathbb R),\ 1\leq j\leq n
		\right\}
	\end{equation*}
	the standard Sobolev space, and by \(W_{\mathrm{loc}}^{n,p}(\mathbb R)\) the corresponding local Sobolev space. We also set
	\begin{equation*}
		L^{p,s}(\Sigma)
		:=
		\left\{
		f:\langle z\rangle^s f\in L^p(\Sigma)
		\right\},
		\qquad
		\langle z\rangle=(1+|z|^2)^{1/2}.
	\end{equation*}
	On contours in the complex plane, the \(L^p\)-norm is taken with respect to arclength measure.
	\par 
	We now state the main results of the direct scattering analysis. 
	\begin{thm}[Direct scattering]\label{thm:direct-scattering}
		Suppose that for $t = 0$, the initial-value condition (\ref{initial-periodic}) satisfies
		$u(x)-u_0^\ell(x)\in L^1(\mathbb R^-)$,
		$u(x)-u_0^r(x)\in L^1(\mathbb R^+)$.
		Then the left and right Jost solutions exist uniquely and determine the scattering coefficients
		\begin{equation*}
			\begin{aligned}
				a(z)=&\frac{\det[\Psi_1^\ell(x;z),\Psi_2^r(x;z)]}{1-z^{-2}}, \quad z \in \mathbb{C}^+\setminus ( \Sigma_1^{\ell} \cup \Sigma_1^r ),
				&&b(z)=\frac{\det[\Psi_1^r(x;z),\Psi_1^\ell(x;z)]}{1-z^{-2}},
				\quad z\in\mathbb R\setminus\{0\},\\
				b_{1}(z)=&\frac{\det[\Psi_{1}^r(x;z),\Psi_{1}^\ell(x;z)]}{1-z^{-2}},
				\quad z\in\Sigma_1^r,
				&&b_{2}(z)=-\frac{\det[\Psi_{2}^r(x;z),\Psi_{2}^\ell(x;z)]}{1-z^{-2}},
				\quad z\in\Sigma_1^\ell.
			\end{aligned}
		\end{equation*}
		At every noncoincident band endpoint, $a(z)$, $b_1(z)$, and $b_2(z)$ have at most fourth-root singularities. Note that the Jost functions $\Psi^{\ell}$ and $\Psi^r$ are continuous for $z \to \pm 1$, the scattering coefficients $a(z)$ and $b(z)$ have simple poles at $z=\pm1$.
		\par 
		If, in addition, $u(x)-u_0^\ell(x)\in W^{1,1}(\mathbb R^-)$, $u(x)-u_0^r(x)\in W^{1,1}(\mathbb R^+)$, then the corresponding expansions are given by
		\begin{equation*}
			a(z)
			e^{
				-\frac{i}{2}(x_0^\ell-x_0^r)(z-z^{-1})}
			=
			1+\mathcal O(z^{-1}),
			\quad z\to\infty, \qquad
			a(z)
			e^{
				-\frac{i}{2}(x_0^\ell-x_0^r)(z-z^{-1})}
			=
			1+\mathcal O(z),
			\quad z\to0.
		\end{equation*}
		\par 
		If, furthermore, $u\in W_{\mathrm{loc}}^{4,1}(\mathbb R)$, $u-u_0^\ell\in W^{4,1}(\mathbb R^-)$, $u-u_0^r\in W^{4,1}(\mathbb R^+)$, then, for $z\in\mathbb R$,
		\begin{equation*}
			b(z)e^{-\frac{i}{2}(x_0^\ell-x_0^r)(z-z^{-1})}
			=
			\mathcal O(z^{-4}),
			\quad z\to\pm\infty, \qquad 
			b(z)e^{-\frac{i}{2}(x_0^\ell-x_0^r)(z-z^{-1})}
			=
			\mathcal O(z^{4}),
			\quad z\to 0.
		\end{equation*}
	\end{thm}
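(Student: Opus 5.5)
The proof splits into three steps: construct the Jost solutions, read off the scattering coefficients and their local behaviour, then integrate by parts for the large-$z$ and small-$z$ asymptotics.

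\textbf{Step 1: Jost solutions.} I start from the background solutions. For each $s\in\{\ell,r\}$, the genus-one finite-gap potential $u_0^s$ has Baker--Akhiezer type solutions $\Psi_0^s(x;z)$ of the Lax pair \eqref{Laxpair}. These are expressed through theta functions on the elliptic curve, uniformized in the Joukowski variable. Their columns are bounded quasi-periodic on the real axis and on the bands $\Sigma_1^s$, and exponentially decaying or growing off them. I then write the Jost solutions through Volterra integral equations:
\begin{equation*}
\Psi^s(x;z)=\Psi_0^s(x;z)+\int_{\mp\infty}^{x}\Psi_0^s(x;z)\Psi_0^s(y;z)^{-1}\,i\sigma_3\bigl(Q(y)-Q_0^s(y)\bigr)\,\Psi^s(y;z)\,dy,
\end{equation*}
with the lower limit $-\infty$ for $s=\ell$ and $+\infty$ for $s=r$. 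The kernel $\Psi_0^s(x)\Psi_0^s(y)^{-1}$ is bounded in the appropriate half-plane for one column, uniformly on compacts away from band endpoints and $z=0$. Since $Q-Q_0^s\in L^1$, the Neumann series converges. This gives existence, uniqueness and analyticity of $\Psi_1^\ell,\Psi_2^r$ in $\mathbb C^+$ minus the cuts, and continuous boundary values on $\mathbb R$ and on the bands. Both columns exist on $\mathbb R$ and on $\Sigma_1^s$.

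\textbf{Step 2: scattering coefficients and local singularities.} Since $\mathcal L$ is traceless, the Wronskians are $x$-independent. The background normalization gives $\det\Psi_0^s=1-z^{-2}$, so $a,b,b_1,b_2$ as defined are independent of $x$, and the scattering relation $\Psi^\ell=\Psi^r S(z)$ holds on the continuous spectrum. Near a noncoincident band endpoint, the background Baker--Akhiezer functions have the factor $((z-E)(z-\bar E))^{-1/4}$ from the normalization of the theta-function eigenvectors. The Volterra iteration preserves this rate, and the determinants therefore inherit at most fourth-root singularities. At $z=\pm1$ the Jost functions are continuous, because the $\Psi_0^s$ columns stay finite there. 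The division by $1-z^{-2}$ then yields at most simple poles.

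\textbf{Step 3: large- and small-$z$ asymptotics.} For $W^{1,1}$ data I integrate the Volterra equation by parts once. This gives $\Psi^s=\Psi_0^s(I+\mathcal O(z^{-1}))$ as $z\to\infty$. The analogous statement as $z\to0$ follows from the symmetry $z\mapsto z^{-1}$, relating the two sheets. The background Baker--Akhiezer functions carry phases $e^{\pm\frac i2(z-z^{-1})(x-x_0^s)}$ up to normalized theta quotients tending to $1$. So the Wronskian $\det[\Psi_1^\ell,\Psi_2^r]$ behaves like $(1-z^{-2})e^{\frac i2(x_0^\ell-x_0^r)(z-z^{-1})}(1+\mathcal O(z^{-1}))$, which is the stated expansion for $a$.

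For $b$ with $W^{4,1}$ data, I write $b$ as an integral of $e^{\pm i(z-z^{-1})y}$ against a combination of $u-u_0^s$ and background quasi-periodic factors. I split at $x=0$ and use $u\in W^{4,1}_{\mathrm{loc}}$ so that no boundary terms appear at the junction. Integrating by parts four times then gives $\mathcal O(z^{-4})$, and $\mathcal O(z^4)$ at $0$ again by the inversion symmetry.

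\textbf{Main obstacle.} I expect the hardest part to be Step 3 for $b$. The background factors are quasi-periodic, not constant, so each integration by parts produces derivatives of $\Psi_0^s$ that grow like powers of $z$. My first attempt would be to transform to the variables $\Psi_0^{s,-1}\Psi^s$ and iterate a Born-series representation, tracking the oscillatory phase exactly, so that the cancellation appears at each order. Uniform control near band endpoints is a secondary difficulty; I would handle it by renormalizing with the $(\cdot)^{1/4}$ factors before iterating.
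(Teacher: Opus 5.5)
Your Steps 1--2 and the asymptotics of $a$ follow essentially the paper's route: Volterra equations relative to the finite-gap background, $x$-independent Wronskians with $\det\Psi^s=1-z^{-2}$, one integration by parts at $z=\infty$, and inversion symmetry for $z\to0$.

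\textbf{First gap: the decay of $b$.} You claim that, after splitting at $x=0$, the hypothesis $u\in W^{4,1}_{\mathrm{loc}}(\mathbb R)$ prevents boundary terms at the junction. This is false. The two half-line perturbations are $u-u_0^\ell$ and $u-u_0^r$, and their values and derivatives at $x=0$ differ by those of $u_0^r-u_0^\ell$. So integrating by parts already leaves a junction term of order $z^{-1}$, proportional to $u_0^r(0)-u_0^\ell(0)$, and further such terms at orders $z^{-2}$ and $z^{-3}$. These terms are cancelled only by the pure background Wronskian $\det[\Psi^r_{0,1}(0;z),\Psi^\ell_{0,1}(0;z)]$, which by itself decays only like $z^{-1}$; compare the pure-step example, where $b=\tfrac12(\Lambda-\Lambda^{-1})$. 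Checking this cancellation through three orders, including the higher Born terms, is exactly what your plan leaves open. Passing to $\Psi_0^{s,-1}\Psi^s$ does not reveal it, because $M^\ell$ and $M^r$ are expanded relative to different backgrounds.

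The paper avoids integral representations altogether. It expands $W^s=\Psi^s e^{i(x-x_0^s)k(z)\sigma_3}=I+\sum_{j=1}^{3}C_j^s(x)z^{-j}+\mathcal O(z^{-4})$ for both $s$ at the same $x$. Both expansions satisfy $\partial_xW=i\sigma_3QW-ik(z)[\sigma_3,W]$ with the same $Q$. The resulting recursion, $\chi_1^s=u$, $\partial_x\xi_j^s=i\bar u\chi_j^s$, $\chi_{j+1}^s=-i\partial_x\chi_j^s+u\xi_j^s-\chi_{j-1}^s$, involves only $u$. Hence $\xi_1^\ell-\xi_1^r$ is constant in $x$, and the coefficients of $z^{-1},z^{-2},z^{-3}$ in $\det[W_1^r,W_1^\ell]$ vanish identically. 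The $W^{4,1}$ regularity is used only to justify the expansion and the differentiation of its coefficients.

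\textbf{Second gap: the endpoint claim.} Renormalizing by the fourth-root factors cannot help. The kernel $\Psi_0^s(x;z)\Psi_0^s(y;z)^{-1}$ is unchanged when $\Psi_0^s$ is multiplied on the right by any $x$-independent factor, and its naive size near an endpoint $\zeta$ is $\mathcal O(|z-\zeta|^{-1/2})$. So ``the Volterra iteration preserves this rate'' is not automatic. The paper proves, via the decomposition \eqref{eq:background-propagator-decomposition} and the endpoint identities for the theta-function quotients, that the singular parts cancel. This gives $\|\Psi_0^s(x;z)\Psi_0^s(y;z)^{-1}\|\le\mathcal C(1+|x-y|)$ uniformly near the endpoints. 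Because of this secular growth, the Neumann series needs a first moment, $u-u_0^s\in L^{1,1}$, and $L^{1,2}$ for the refined endpoint expansion, rather than bare $L^1$. That is what the paper actually assumes in property~\ref{prop:pro3.3-property-4} of Proposition~\ref{pro3.3}.
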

	Here, for $s\in{\ell,r}$, the Jost solution $\Psi^s$ and the contour $\Sigma_1^s$ are defined in \eqref{eq:direct_z_M_def} and \eqref{eq:Sigma12}, respectively. The real constant $x_0^s$ denotes the spatial phase shift of the corresponding genus-one periodic background $u_0^s(x)$.
	\par 
	Throughout this paper, we assume that no discrete eigenvalues are present. We also use the notation
	$
	\Sigma^\ell:=\Sigma_1^\ell\cup\Sigma_2^\ell$,
	$
	\Sigma^r:=\Sigma_1^r\cup\Sigma_2^r.
	$
	\begin{assumption}\label{ass:no-discrete-spectrum}
		The analytic scattering coefficient $a(z)$ has no zeros in $\mathbb C^+\setminus\left(\Sigma_1^\ell\cup\Sigma_1^r\right)$.
		$\Sigma^{\ell}$ and $\Sigma^r$ have no common endpoints.
	\end{assumption}
	\par 
	Under Assumption~\ref{ass:no-discrete-spectrum}, the scalar function $\gamma(z)$ is introduced through the multiplicative jump problem given explicitly in \eqref{eq:gamma}. We then define
	\begin{equation*}
		a_1(z)
		=
		\sqrt{\frac{a(z)}{\gamma(z)}}
		e^{-\frac{i}{4}(x_0^\ell-x_0^r)(z-z^{-1})},
		\quad 
		a_2(z)
		=
		\sqrt{a(z)\gamma(z)}
		e^{\frac{i}{4}(x_0^\ell-x_0^r)(z-z^{-1})},
		\qquad z\in\mathbb C^+.
	\end{equation*}
	The two band coefficients and the real-axis reflection coefficient are defined by
	\begin{equation*}
		\begin{aligned}
			r_1(z)
			&=
			\frac{a_{1-}(z)}{a_{2-}(z)}
			\frac{e^{-ix_0^r(z-z^{-1})}}
			{a_{2+}(z)a_{1-}(z)-ib_{2-}(z)},
			&&z\in\Sigma_1^\ell,\\
			r_2(z)
			&=
			\frac{a_{2-}(z)}{a_{1-}(z)}
			\frac{e^{ix_0^r(z-z^{-1})}}
			{a_{2-}(z)a_{1+}(z)+ib_{1-}(z)},
			&&z\in\Sigma_1^r,\\
			\rho(z)
			&=
			\frac{b(z)}
			{a_2(z)a_1^*(z)}
			e^{-ix_0^r(z-z^{-1})},
			&&z\in\mathbb R,
		\end{aligned}
	\end{equation*}
	where the superscript $*$ denotes Schwarz conjugation, namely,	$g^*(z):=\overline{g(\overline{z})}$.
	\par 
	Next we introduce the full soliton gas RHP on a background, i.e., in the case $\rho(z) \neq 0$.
	\begin{RHP}\label{RHP2}\
		\begin{enumerate}
			\item Analyticity: $  m(z)$ is analytic for $z\in\mathbb{C}\setminus(\mathbb{R}\cup\Sigma^{\ell}\cup\Sigma^r \cup\{0\})$.
			\item Asymptotic behaviors: $  m(z)= I +\mathcal{O}(z^{-1})$ as $z\to\infty$ and $  m(z)=\frac{\sigma_1}{z}+\mathcal{O}(1)$ as $z\to0$.
			\item Symmetry conditions: $  m(z)=\sigma_1\overline{ m(\bar{z})}\sigma_1=z^{-1} m(z^{-1})\sigma_1$.
			\item Jump conditions: 
			\begin{equation*}
				m_+(z;x,t)= m_-(z;x,t) V^{ m}(z;x,t),
			\end{equation*}
			where
			\begin{equation}\label{jump2}
				V^{ m}(z;x,t) =	\begin{cases}
					\begin{pmatrix}
						1 & 0 \\
						-2ir_1(z)e^{if(z;x,t)} & 1
					\end{pmatrix}, & z\in \Sigma_1^{\ell} \setminus \Sigma_1^r,
					\\
					\begin{pmatrix}
						1 & -2ir_2(z) e^{-if(z;x,t)} \\
						0 & 1
					\end{pmatrix}, & z\in \Sigma_1^r \setminus \Sigma_1^{\ell},
					\\
					\begin{pmatrix}
						\frac{1 - r_1(z) r_2(z)}{1 + r_1(z) r_2(z)} & \frac{-2ir_2(z)}{1 + r_1(z) r_2(z)} e^{-if(z;x,t)} \\
						\frac{-2ir_1(z)}{1 + r_1(z) r_2(z)} e^{if(z;x,t)} & \frac{1 - r_1(z) r_2(z)}{1 + r_1(z) r_2(z)}
					\end{pmatrix}, & z\in \Sigma_1^{\ell} \cap \Sigma_1^r,
					\\
					\begin{pmatrix}
						1 & 2ir_1^*(z)e^{-if(z;x,t)} \\
						0 & 1
					\end{pmatrix}, & z\in \Sigma_2^{\ell} \setminus \Sigma_2^r,
					\\
					\begin{pmatrix}
						1 & 0 \\
						2ir_2^*(z) e^{if(z;x,t)} & 1
					\end{pmatrix}, & z\in \Sigma_2^r \setminus \Sigma_2^{\ell},
					\\
					\begin{pmatrix}
						\frac{1 - r_1^*(z) r_2^*(z)}{1 + r_1^*(z) r_2^*(z)} & \frac{2ir_1^*(z)}{1 + r_1^*(z) r_2^*(z)} e^{-if(z;x,t)} \\
						\frac{2ir_2^*(z)}{1 + r_1^*(z) r_2^*(z)} e^{if(z;x,t)} & \frac{1 - r_1^*(z) r_2^*(z)}{1 + r_1^*(z) r_2^*(z)}
					\end{pmatrix}, & z\in \Sigma_2^{\ell} \cap \Sigma_2^r,
					\\
					\begin{pmatrix}
						1 - |\rho(z)|^2  & - \overline{\rho(z)} e^{-i f(z;x,t)}  \\
						\rho(z) e^{i f(z;x,t)}  &  1
					\end{pmatrix}, & z \in \mathbb{R},
				\end{cases}
			\end{equation}
			where 
			\begin{equation}\label{eq:f}
				f(z;x,t)=x(z-z^{-1})-t(z^2-z^{-2}).
			\end{equation}
		\end{enumerate}
	\end{RHP}
	
	\begin{thm}[Reconstruction]\label{thm:inverse-problem}
		Under the assumptions of Theorem \ref{thm:direct-scattering}, the symmetrized scattering coefficients $r_1(z)$ and $r_2(z)$ are positive. Furthermore, if
		\begin{equation*}
			\rho\in L^{2,2}(\mathbb R)\cap L^{1,2}(\mathbb R) \cap L^{\infty,2}(\mathbb{R}),
			\qquad
			r_1\in L^2(\Sigma_1^\ell) \cap L^{\infty}(\Sigma_1^\ell),
			\qquad
			r_2\in L^2(\Sigma_1^r) \cap L^{\infty}(\Sigma_1^r).
		\end{equation*}
		Then the full soliton gas RHP \ref{RHP2} is uniquely solvable for every $(x,t)\in\mathbb R^2$. The potential function is reconstructed by
		\begin{equation}\label{eq:reconstruction-intro}
			u(x,t)
			=
			\gamma(\infty)
			\lim_{z\to\infty}z\,m_{21}(z;x,t),
		\end{equation}
		where $\gamma(\infty)$ is defined by \eqref{eq:gamma-asym} and $ u(x,t) \in  C^2(\mathbb{R}) \times C^1(\mathbb{R}^+)$ is a classical solution of the dNLS equation.
	\end{thm}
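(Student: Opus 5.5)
The proof has three parts: positivity of $r_1,r_2$, unique solvability of RHP~\ref{RHP2} by a vanishing lemma plus Fredholm theory, and reconstruction of $u$ by the dressing method.

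\textbf{Positivity of $r_1,r_2$.} I would use the Jost-function relations on the bands. On $\Sigma_1^\ell$ the boundary values of $\Psi^\ell$ from the two sides are related by a permutation–phase matrix, while $\Psi^r$ is continuous across $\Sigma_1^\ell$ (away from $\Sigma_1^r$). Taking determinants then gives $a_+-a_-$ in terms of $b_2$, so that
\[
a_{2+}a_{1-}-ib_{2-}=\frac{a_{2-}a_{1+}}{\cdots}\,.
\]
Combined with the multiplicative jump of $\gamma$ from \eqref{eq:gamma}, which was chosen exactly to symmetrize, this identity shows that $r_1 e^{ix_0^r(z-z^{-1})}$ equals a ratio of the form $|a_{1-}|^2/|\,\cdot\,|^2$ times a Wronskian of the periodic background. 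The sign of that Wronskian on the band is fixed by the reality of the Floquet spectrum.

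The Schwarz symmetry $\Psi(z)=\sigma_1\overline{\Psi(\bar z)}\sigma_1$ and the inversion $z\mapsto z^{-1}$ map $\Sigma_1$ onto itself with reversed orientation. This forces $r_1$ to be real, and the quadratic form above makes it positive. The argument for $r_2$ on $\Sigma_1^r$ is identical.

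\textbf{Unique solvability.} I would first remove the pole at $0$ by the standard device: multiply by $(I+\frac{\sigma_1}{z})^{-1}$-type factors, or equivalently work with $m(z)(I-\sigma_1/z)$ after noting that $\det(I-\sigma_1/z)$ vanishes at $z=\pm1$. This reduces RHP~\ref{RHP2} to a normalized problem without singularity at $0$. The band contours lie off $\mathbb R$ in Schwarz-conjugate pairs $\Sigma_1,\Sigma_2$, and $\rho\in L^{2,2}\cap L^{\infty,2}$ controls the real-axis jump, including near $z=0$ via the inversion symmetry.

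I would then write the Beals–Coifman singular integral equation $\mu=I+C_w\mu$ in $L^2(\Sigma)$ with $\Sigma=\mathbb R\cup\Sigma^\ell\cup\Sigma^r$. Because the jumps have fourth-root singularities only at noncoincident endpoints and Assumption~\ref{ass:no-discrete-spectrum} holds, $I-C_w$ is Fredholm of index zero on $L^2$. This follows from the continuity of the jumps and from the fact that at the endpoints the jump tends to triangular form, since $r_j\in L^\infty$.

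Uniqueness then reduces to a vanishing lemma: if $m_0$ solves the homogeneous problem with $m_0=\mathcal O(z^{-1})$, set $H(z)=m_0(z)\,m_0^\dagger(\bar z)$ in $\mathbb C^+$ and integrate over $\mathbb R\cup\Sigma_1\cup\Sigma_2$. The key facts are:
\begin{itemize}
\item on $\mathbb R$, $V+V^\dagger$ is positive definite, since its diagonal is $2-|\rho|^2,2$ and its off-diagonal entries cancel, giving $\mathrm{diag}(2-|\rho|^2,2)$ with $|\rho|<1$;
\item on the bands, the Schwarz-reflected pairs combine through the symmetry $m=\sigma_1\overline{m(\bar z)}\sigma_1$, and the positivity $r_j>0$ makes the contributions sign-definite, as in the focusing and defocusing soliton-gas vanishing lemmas of \cite{GravaJenkinsZhangZhang2026,YanGengChen2026}.
\end{itemize}
I expect this step to be the main obstacle. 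The nonstandard part is the overlap region $\Sigma_1^\ell\cap\Sigma_1^r$, where I would check directly that the symmetric matrix $\frac{1}{1+r_1r_2}\begin{pmatrix}1-r_1r_2&\cdot\\\cdot&1-r_1r_2\end{pmatrix}$ yields a nonnegative Hermitian part after the triangular factorization. The bound $|\rho|<1$ itself follows from the unitarity relation $|a|^2-|b|^2=\ldots$ of the scattering matrix together with the normalization by $a_2a_1^*$.

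\textbf{Reconstruction and regularity.} Uniqueness plus the symmetry conditions imply that $\sigma_1\overline{m(\bar z)}\sigma_1$ and $z^{-1}m(z^{-1})\sigma_1$ also solve the problem, hence coincide with $m$. For regularity, the weights $L^{1,2},L^{2,2}$ on $\rho$ allow differentiating $\mu$ in $x$ twice and in $t$ once under the integral, giving $u\in C^2\times C^1$. The dressing argument then applies: $\Phi=m\,e^{-\frac i2 f\sigma_3}$ has $(x,t)$-independent jumps, so $\Phi_x\Phi^{-1}$ and $\Phi_t\Phi^{-1}$ are entire up to poles at $0$ and $\infty$. Liouville's theorem identifies them with $\mathcal L,\mathcal B$ and yields \eqref{eq:reconstruction-intro}, where the factor $\gamma(\infty)$ accounts for the scalar conjugation used to pass from the original RHP to RHP~\ref{RHP2}.
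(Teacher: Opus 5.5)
\textbf{The main gap: positivity of $r_1,r_2$ on the overlap.} Your plan for solvability and reconstruction follows the paper: a singular integral equation on $\Gamma=\mathbb R\cup\Sigma^\ell\cup\Sigma^r$, a vanishing lemma using $\frac12(V+V^\dagger)=\operatorname{diag}(1-|\rho|^2,1)$ on $\mathbb R$ with $|\rho|^2=1-|a|^{-2}$, then differentiating the integral equation and a Lax-pair argument. The genuine gap is positivity. Your argument only covers the part of $\Sigma_1^\ell$ where $\Psi^r$ is continuous, and ``the argument for $r_2$ is identical'' never reaches the overlap $\Sigma_1^\ell\cap\Sigma_1^r$. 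There both Jost matrices jump, $a_1$ has a genuine jump, and $r_1=\frac{a_{1-}}{a_{2-}}\frac{e^{-ix_0^r(z-z^{-1})}}{a_{2+}a_{1-}-ib_{2-}}$ is not a modulus squared. Symmetry does not even give reality there. Using $a_{j+}=\overline{a_{j-}}$ and $b_{2+}=\overline{b_{2-}}$, one finds $\overline{r_1}=r_1$ if and only if $\frac{a_{2+}a_{1-}}{a_{2-}a_{1+}}=-\frac{b_{2-}}{b_{2+}}$, which is exactly the overlap condition \eqref{eq:gamju2} imposed on $\gamma$. No background Wronskian or Floquet sign is involved.

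The missing idea is the paper's splitting
$r_1=\frac{e^{2x_0^r\operatorname{Im}z}}{|a_{2-}|^2}\cdot\frac{w}{w-1}$, with $w=\frac{a_{2+}a_{1-}}{ib_{2-}}$,
together with the identity $w^2=\frac{a_+a_-}{b_{1-}b_{2-}}=1+|b_{1-}|^{-2}$. This follows from \eqref{eq:gamju2}, the overlap relation \eqref{b2-b1-} (so $a_+=a_-^*$ and $b_{2-}=b_{1+}=\overline{b_{1-}}$), and $\det S_-=1$. Hence $w$ is real with $|w|>1$, and so $w/(w-1)>0$. Off the overlap no such identity is needed: $a_+=-ib_{2-}$ and $a_{1+}=a_{1-}$ give $r_1=e^{2x_0^r\operatorname{Im}z}/(2|a_{2-}|^2)$ directly, cf.\ \eqref{eq:r1r2setm}.

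\textbf{Secondary problems in the solvability step.} First, the pole removal. Right-multiplying by $(I-\sigma_1/z)$ is not equivalent to right-multiplying by $(I+\sigma_1/z)^{-1}=\frac{z^2}{z^2-1}(I-\sigma_1/z)$; the former creates a $z^{-2}$ term at $0$. The latter puts poles at $z=\pm1\in\mathbb R$ and conjugates the jumps, which destroys the structure you need on $\mathbb R$. The paper instead subtracts the jump-free $m^{(0)}=I+\sigma_1/z$ and solves $(I-\mathcal C_w)\nu=C_-[m^{(0)}w]$.

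Second, a slip on the real axis: $V+V^\dagger=\operatorname{diag}(2-2|\rho|^2,2)$, not $\operatorname{diag}(2-|\rho|^2,2)$. It degenerates at $z=\pm1$, where $|\rho|=1$.

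Third, the bands. A nonnegative Hermitian part is not available there: the overlap jump has diagonal $\frac{1-r_1r_2}{1+r_1r_2}$, which is negative when $r_1r_2>1$. It is also not what Zhou's lemma asks for off $\mathbb R$. What is needed is that $N(z)N(\bar z)^\dagger$ be jump-free, or at least sign-definite, across the upper bands. With the counterclockwise orientation of RHP~\ref{RHP2}, the $\sigma_1$-symmetry gives $V(\bar z)^\dagger=V(z)$, hence $V(z)V(\bar z)^\dagger=V(z)^2\neq I$. So Schwarz pairing alone leaves a band term $\int N_-(z)(V^2-I)N_-(\bar z)^\dagger\,dz$ that couples $z$ and $\bar z$.

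The paper compresses this step into $V(\bar z)=V(z)^\dagger$ plus a citation of Zhou, so you are no thinner than the paper here, but the step needs more. One route is to bring in the inversion symmetry as well. Since $\bar z=z^{-1}$ on $\mathbb T$, a homogeneous solution that also satisfies $N(z)=z^{-1}N(z^{-1})\sigma_1$ has $N_-(\bar z)=zN_+(z)\sigma_1$. The term on $\Sigma_1^\ell\setminus\Sigma_1^r$ then becomes $4r_1e^{if}\,N_-^{(2)}(N_-^{(2)})^\dagger\,d\theta$, where $N_-^{(2)}$ is the second column and $e^{if}>0$ on $\mathbb T$. That, rather than a Hermitian-part check, is where $r_j>0$ can actually enter.
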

	\begin{rmk}\label{rmk:full-gas-interpretation}
		RHP~\ref{RHP2} may be interpreted as a full dark-soliton gas RHP coupled to a radiative component. When $\rho\equiv0$, the jump on the real axis disappears and the problem is supported only on the elliptic spectral bands $\Sigma^\ell\cup\Sigma^r$. The resulting band jumps have the same algebraic structure as those arising from continuum limits of multisoliton data. If one of the two band coefficients $r_1$ and $r_2$ vanishes identically, these jumps reduce to the one-sided triangular jumps appearing in dark-soliton gas constructions such as \cite{BertolaWangYanZhu2026}. When $\rho\not\equiv0$, the additional jump on $\mathbb R$ incorporates the radiative part of the scattering data. Thus RHP~\ref{RHP2} can be regarded as a full dark-soliton gas RHP in the presence of nonzero radiation. Closely related interpretations for step-like finite-gap scattering problems have been given for the focusing NLS and KdV equations in \cite{GravaJenkinsZhangZhang2026,Zhu2026}; see also \cite{GravaJenkinsZhangZhangInverse2026} for an inverse-scattering problem involving an elliptic background and a full soliton gas.
		\par 
		Unlike a general full-gas RHP, the band data in the present setting are constrained by their direct-scattering origin. In particular, $r_1$ and $r_2$ satisfy the symmetries, sign conditions, and prescribed square-root endpoint behavior inherited from the step-like initial condition. Hence the class obtained here is more restrictive than an abstract admissible class of full-gas data. This is consistent with the constructions in \cite{BertolaWangYanZhu2026}, where the resulting potentials may approach finite-gap backgrounds only at the rate $\mathcal O(|x|^{-1})$ and therefore need not belong to the $L^1$ step-like class considered in Theorem~\ref{thm:direct-scattering}.
		\par 
		Consequently, the scattering data obtained in the present paper form a distinguished subclass of full-gas data, and we do not claim that every admissible full-gas datum lies in the range of the direct-scattering map considered here. The main point of RHP~\ref{RHP2} is that it is derived directly from prescribed step-like elliptic initial data, thereby providing a concrete bridge between classical direct scattering theory and the more general full-gas Riemann--Hilbert framework.
	\end{rmk}
	
	\section{Genus-one finite gap solution}
	In this section, we construct the genus-one finite-gap backgrounds associated with the left and right periodic initial data.
	\subsection{Spectral geometry and band configurations}\label{sec:sepctral geo}
	We begin by determining the spectral bands on $z$-plane associated with the left and right reflectionless periodic initial data. When no two band endpoints coincide, there are six possible configurations. 
	\par 
	For the dNLS equation, the periodic Zakharov--Shabat operator is self-adjoint, and its Floquet spectrum consists of real spectral bands; see \cite{BiondiniZhang}. Under the Joukowski transformation
	$
	k=\frac{1}{2}\left(z+z^{-1}\right),
	$
	which is standard in the inverse-scattering theory for the defocusing NLS equation with nonzero boundary conditions \cite{DemontisPrinariVanderMeeVitale2013,Jenkins2016}, the relevant real spectral intervals are represented by pairs of Schwarz-symmetric arcs of the unit circle. 
	\par 
	The same unit-circle spectral geometry also appears in recent RHP constructions of dark-soliton gases \cite{BertolaWangYanZhu2026}. Accordingly, each genus-one background considered here determines one arc in the upper unit semicircle together with its complex conjugate in the lower half-plane. 
	\par 
	To describe them, we first introduce some notation.
	\par 
	Let
	$
	\mathbb{T}:=\{z\in\mathbb{C}:|z|=1\}.
	$
	For two points $a,b\in\mathbb{T}$, we denote by $(a,b)_{\mathbb{T}}$ the open arc of the unit circle oriented counterclockwise from $a$ to $b$. For $s\in\{\ell,r\}$, write
	\[
	\eta_j^s=e^{ i \theta_j^s},
	\qquad
	0<\theta_1^s<\theta_2^s<\pi,
	\qquad j=1,2,
	\]
	and define the upper and lower spectral bands by
	\begin{equation}\label{eq:Sigma12}
		\Sigma_1^s
		:=
		(\eta_1^s,\eta_2^s)_{\mathbb{T}},
		\qquad
		\Sigma_2^s
		:=
		(\overline{\eta_2^s},
		\overline{\eta_1^s})_{\mathbb{T}}.
	\end{equation}

	Depending on the relative positions of the endpoints, the upper spectral bands are decomposed into
	\[
	\Sigma_1^\ell\setminus\Sigma_1^r,
	\qquad
	\Sigma_1^\ell\cap\Sigma_1^r,
	\qquad
	\Sigma_1^r\setminus\Sigma_1^\ell,
	\]
	with the corresponding decomposition on the lower half-plane obtained by complex conjugation. The six possible relative configurations of $\Sigma_1^\ell$ and $\Sigma_1^r$ are illustrated in Figure \ref{fig:six-cases}.

	\begin{figure}[htbp]
		\centering
		
		\begin{subfigure}[b]{0.31\textwidth}
			\centering
			\includegraphics[width=\linewidth]{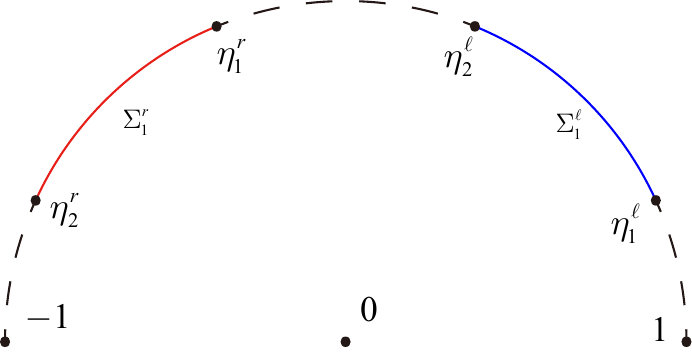}
			\caption{}
			\label{fig:cut1}
		\end{subfigure}
		\hfill
		\begin{subfigure}[b]{0.31\textwidth}
			\centering
			\includegraphics[width=\linewidth]{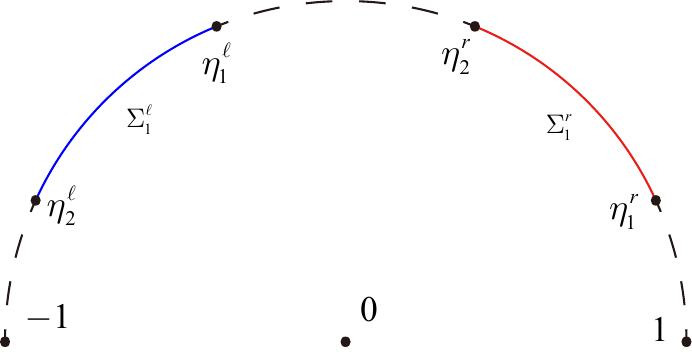}
			\caption{}
			\label{fig:cut2}
		\end{subfigure}
		\hfill
		\begin{subfigure}[b]{0.31\textwidth}
			\centering
			\includegraphics[width=\linewidth]{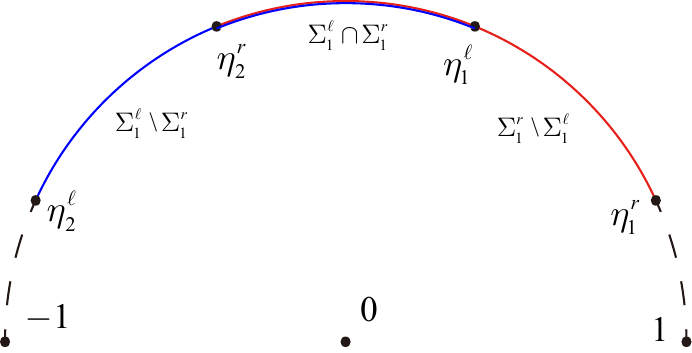}
			\caption{}
			\label{fig:cut3}
		\end{subfigure}
		
		\medskip
		
		\begin{subfigure}[b]{0.31\textwidth}
			\centering
			\includegraphics[width=\linewidth]{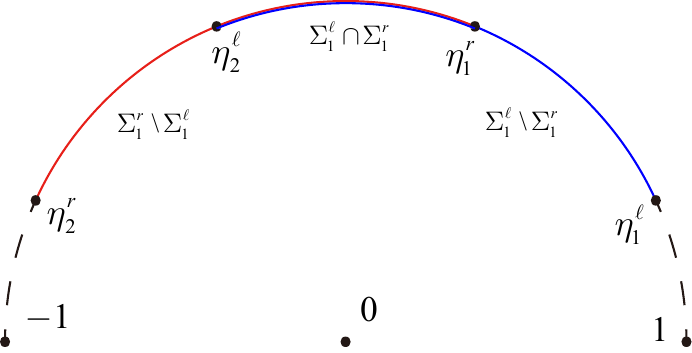}
			\caption{}
			\label{fig:cut4}
		\end{subfigure}
		\hfill
		\begin{subfigure}[b]{0.31\textwidth}
			\centering
			\includegraphics[width=\linewidth]{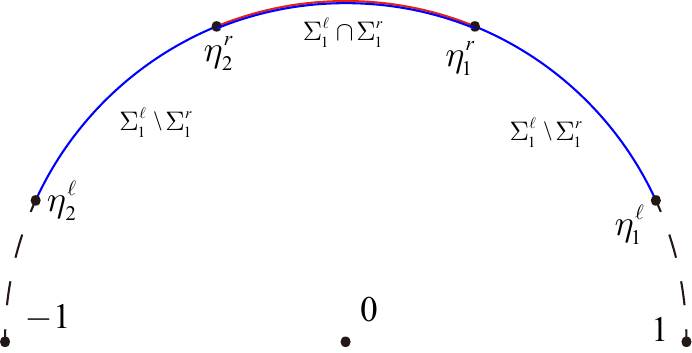}
			\caption{}
			\label{fig:cut5}
		\end{subfigure}
		\hfill
		\begin{subfigure}[b]{0.31\textwidth}
			\centering
			\includegraphics[width=\linewidth]{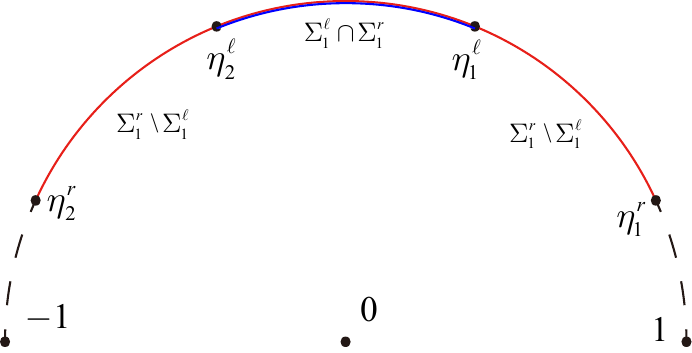}
			\caption{}
			\label{fig:cut6}
		\end{subfigure}
		
		\caption{The six possible configurations of the spectral bands on $\mathbb{T}\cap\mathbb{C}^+$. The case in the lower half-plane can be obtained by complex conjugation. Here, the spectral bands shown in red correspond to the right background wave, while those shown in blue correspond to the left background wave.
		}
		\label{fig:six-cases}
	\end{figure}

	\subsection{Riemann surfaces and the Joukowski transformation}
	In this section, we describe the Riemann surfaces arising naturally in the $z$- and $k$-planes and clarify their relation through the Joukowski transformation
	$
	k=\frac{1}{2}\left(z+\frac{1}{z}\right).
	$
	\par 
	In Subsection \ref{sec:sepctral geo}, we introduced the corresponding spectral curves. We now discuss their branch points, branch cuts, and sheet structures. We then describe how these objects are mapped into one another under the Joukowski transformation. This correspondence will also be used to relate the Abelian differentials and Abelian integrals defined on the two surfaces. For convenience, throughout this section we suppress the superscripts $r$ and $\ell$ on $\eta_j$, $j =1 ,2$.
	\par 
	Introduce the Riemann surface corresponding to the $z$-plane and $k$-plane:
	\begin{gather*}
		\mathcal{S}^{(z)} = \{ ( z , R ) \in \mathbb{C}^2 : R^2 = (z - \eta_1 )(z - \overline{\eta_1} )(z - \eta_2 )(z - \overline{\eta_2} ) \},  \\
		\mathcal{S}^{(k)} = \{ ( k , \mathcal{R} ) \in \mathbb{C}^2 : \mathcal{R}^2 = (k + 1 )(k - \re\eta_1 )(k - \re\eta_2 )(k - 1 ) \},
	\end{gather*}
	where $R(z)$ and $\mathcal{R}(k)$ denote the corresponding square roots on the sheets characterized by $R(z) \sim z^2$ and $ \mathcal{R}(k) \sim k^2 $ at infinity. The canonical homology bases on $ \mathcal{S}^{(z)} $ and $ \mathcal{S}^{(k)} $ is shown in Figure \ref{fig:homology-bases}. Then the quasi-momentum $dp$ and quasi-energy $dq$ can be expressed by the second kind of Abel differential on $\mathcal{S}^{(z)}$:
	\begin{equation*}
		dp = \frac{ z^4 + A z^3 + B z^2 + A z + 1 }{2 z^2 R(z)} dz, \qquad  dq = -\frac{ z^6 + A z^5 + C z^4 + D z^3 + C z^2 + A z + 1 }{z^3 R(z)} dz,
	\end{equation*}
	which can also be encoded on $\mathcal{S}^{(k)}$:
	\begin{equation*}
		dp = \frac{4k^2 + 2 A k + B - 2}{4 \mathcal{R}(k)} dk, \qquad dq = -\frac{ 8k^3 + 4 A k^2 + (2 C - 6)k + D - 2A }{2 \mathcal{R}(k)} dk.
	\end{equation*}
	The constants $A$, $B$, $C$ and $D$ are determined by imposing
	\begin{equation*}
		\begin{aligned}
			&dp = \left(\frac{1}{2} + \mathcal{O}(z^{-2})\right) dz, \quad dq = \left(-z + \mathcal{O}(z^{-2})\right) dz, \qquad z\to \infty_+, \\
			&dp = \left(\frac{1}{2z^2} + \mathcal{O}(1)\right) dz, \quad dq = \left(-\frac{1}{z^{3}} + \mathcal{O}(1)\right) dz, \qquad z\to 0_+,
		\end{aligned}
	\end{equation*}
	together with the $a$-circle normalization (see Figure \ref{fig:jump})
	\begin{equation*}
		\oint_{a} dp = \oint_{a} dq = 0.
	\end{equation*}
	A direct calculation gives the following expression for $A$, $B$, $C$ and $D$:
	\begin{gather*}
		A = -( \re\eta_1 + \re\eta_2 ), \quad B = 2 + 2( \re\eta_1 - \re\eta_2 ) - 2(1 + \re\eta_1) ( 1 - \re\eta_2 ) \frac{ E(m_1) }{ K(m_1) }, \\
		C = \frac{2\eta_1\eta_2(\eta_1+\eta_2)^2 - (\eta_1-\eta_2)^2(1+\eta_1^2\eta_2^2)}{8\eta_1^2\eta_2^2},  \\
		D = (\re\eta_1 + \re\eta_2) ( \re\eta_1 - \re\eta_2 ) - ( \re\eta_1 + \re\eta_2 ) ( 1 + \re\eta_1 ) ( 1 - \re\eta_2 )
		\left[ \frac{ (1 + \sqrt{m_1})^2 }{2}  \frac{ E(m) }{ K(m) }  + \frac{ 1 - m_1 }{2} \right],
	\end{gather*}
	where 
	\begin{equation*}
		m=\left(\frac{2\sqrt{\operatorname{Im}\eta_1\operatorname{Im}\eta_2}}
		{\lvert\eta_1-\eta_2^{-1}\rvert}\right)^2
		= \frac{4\sqrt{m_1}}{(1 + \sqrt{m_1})^2}, \qquad 
		m_1 = \frac{ (1 - \re\eta_1) (1 + \re\eta_2) }{ (1 + \re\eta_1) (1 - \re\eta_2) }.
	\end{equation*}
	Here we recall the definitions of the complete elliptic integrals of the first and second kind:
	\begin{equation*}
		K(m) = \int_{0}^{\frac{\pi}{2}} \frac{d \theta}{ \sqrt{ 1 - m \sin^2 \theta } } , \qquad 
		E(m) = \int_{0}^{\frac{\pi}{2}} \sqrt{ 1 - m \sin^2 \theta } d \theta.
	\end{equation*}
	By applying the Riemann bilinear relation, we have 
	\begin{equation}\label{eq:Omega12}
		\Omega_1 := -\oint_{b} dp = -\frac{ \pi | \eta_1 - \overline{\eta_2} | }{ K (m) }, \qquad 
		\Omega_2 := -\oint_{b} dq = v\frac{ \pi | \eta_1 - \overline{\eta_2} | }{ K (m) },
	\end{equation}
	where $ v = -\re (\eta_1 + \eta_2 ) $ is the velocity of the traveling wave.
	\par 
	We choose a base point $\eta_1$ on Riemann surface $ \mathcal{S}^{(z)} $ and $\re \eta_1 $ on $ \mathcal{S}^{(k)} $. Then define the Abel integrals by quasi-momentum and quasi-energy:
	\begin{equation*}
		\text{on the $z$-plane:}\quad
		p(z) = \int_{\eta_1}^{z} dp, \quad
		q(z) = \int_{\eta_1}^{z} dq,
		\qquad
		\text{on the $k$-plane:}\quad
		p(k) = \int_{\re\eta_1}^{k} dp, \quad
		q(k) = \int_{\re\eta_1}^{k} dq.
	\end{equation*}
	\par 
	Now we have the expansions in the $z$-plane
	\begin{gather*}
		p(z) = \frac{z}{2} + p_{\infty} + \frac{p_1^{(z)}}{z} + o(z^{-1}), \quad q(z) = -\frac{z^2}{2} + q_{\infty} + o(1), \qquad z \to \infty, \\
		p(z) = -\frac{1}{2z} - p_{\infty} - p_1^{(z)} z + o(z), \quad q(z) = \frac{1}{2z^2} - q_{\infty} + o(1), \qquad z \to 0,
	\end{gather*}
	together with the expansions in the $k$-plane
	\begin{equation}\label{eq:p(k)}
		p(k) = k + p_{\infty}^{(k)}  + \frac{p_1^{(k)}}{k} + o(k^{-1}), \quad q(k) = -2k^2 + q_{\infty}^{(k)}  + o(1), \qquad k \to \infty,
	\end{equation}
	where 
	\begin{equation*}
		p_1^{(z)} = (1 + \re\eta_1)(1 - \re\eta_2) \frac{E(m_1)}{K(m_1)} + \frac{1}{2} - \frac{(2 + \re\eta_1 - \re\eta_2)^2}{4}, \quad 
		p_1^{(k)} = \frac{p_1^{(z)} - \frac{1}{2} }{2}. 
	\end{equation*}
	By Joukowski transformation we further have
	\begin{equation*}
		p_{\infty} = p_{\infty}^{(k)}, \quad  q_{\infty} = q_{\infty}^{(k)} - 1.
	\end{equation*}
	Let  $\Omega_0\in\mathbb{R}$ be the phase constant fixing the spatial translation of the periodic wave. Using \eqref{eq:Omega12}, the dispersion information can be encoded by defining 
	\begin{equation*}
		\Omega = \Omega_0 + x\Omega_1 +t\Omega_2 = \Omega_1( x - x_0 + v t ),
	\end{equation*}
	where $ x_0 = -\frac{\Omega_0}{ \Omega_1} $.
	\par 
	To analyze the domains of analyticity of the Jost functions introduced in Section \ref{Secfast}, we introduce the following lemma.
	\begin{lem}\label{lemfoin}
		Away from the spectral bands, the imaginary part of the normalized quasi-momentum $p$ has signs 
		\begin{equation*}
			\operatorname{Im}p(z)>0,\qquad z\in\mathbb C^+\setminus\overline{\Sigma_1},
			\qquad
			\operatorname{Im}p(z)<0,\qquad z\in\mathbb C^-\setminus\overline{\Sigma_2},
		\end{equation*}
		where \(\overline{\Sigma_j}\) denotes the closure of the contour \(\Sigma_j\), $j =1, 2$, including its endpoints.
	\end{lem}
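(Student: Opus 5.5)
We would prove Lemma~\ref{lemfoin} by a harmonic-function argument for $\operatorname{Im}p$ on the complement of the bands, using the reality of $dp$ on the relevant arcs and its prescribed singularities at $\infty$ and $0$.

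\textbf{Step 1: $p$ is single-valued off the bands.} On the sheet fixed by $R(z)\sim z^2$ at infinity, $R$ is analytic in $\mathbb C\setminus(\overline{\Sigma_1}\cup\overline{\Sigma_2})$. That domain is doubly connected, and the only cycle around which $dp$ could have a period is homologous to the $a$-cycle. The normalization $\oint_a dp=0$ therefore makes $p(z)=\int_{\eta_1}^z dp$ single-valued there. Hence $h:=\operatorname{Im}p$ is harmonic in $\mathbb C\setminus(\overline{\Sigma_1}\cup\overline{\Sigma_2}\cup\{0\})$.

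\textbf{Step 2: $h$ vanishes on the unit circle off the bands.} Writing $dp$ in the $k$-variable, $dp=\frac{4k^2+2Ak+B-2}{4\mathcal R(k)}\,dk$ has real coefficients. Along the two components of $\mathbb T\setminus(\overline{\Sigma_1}\cup\overline{\Sigma_2})$, $k$ runs over the gaps $(-1,\re\eta_2)$ and $(\re\eta_1,1)$, where $\mathcal R$ is real. So $dp$ is real along these arcs of $\mathbb T$. Starting from $\eta_1$, where $p=0$, this gives $h=0$ on the gap arc adjacent to $\eta_1$. The $a$-normalization (equivalently, the reality of $\oint_a dp$) then gives $h=0$ on the other gap arc as well.

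\textbf{Step 3: $h$ vanishes on the bands too.} The boundary values $p_\pm$ on $\Sigma_j$ satisfy $p_++p_-=\text{const}$ with real constant, the $b$-period $\Omega_1$. Moreover $dp_\pm$ is purely imaginary along the bands, so $h$ is continuous across the bands only up to sign.

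The cleaner route is the symmetry $h(\bar z)=-h(z)$, coming from the Schwarz symmetry of $dp$, together with $h(1/\bar z)$. Using $p(1/z)$ versus $p(z)$ from the expansions at $0$ and $\infty$, we get $p(1/z)=-p(z)+\text{real}$, hence $h(1/\bar z)=h(z)$. So it suffices to work in $D:=\{|z|>1,\ \operatorname{Im}z>0\}$.

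\textbf{Step 4: boundary values and maximum principle on $D$.} On $\mathbb R\setminus\{0\}$ with $|z|>1$ we have $k$ real with $|k|>1$, so $\mathcal R$ is real and $h$ is constant there. Comparing with the value at $z=\pm1$, which lies on $\mathbb T$, this constant is $0$. On the arc of $\mathbb T\cap\overline{\mathbb C^+}$, $h=0$ on the gap and $h$ is undetermined on $\Sigma_1$. As $z\to\infty$, $h\sim\operatorname{Im}(z/2)\to+\infty$.

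To control $\Sigma_1$, evaluate the boundary value of $h$ from the exterior side, $\pm\operatorname{Im}\int_{\eta_1}^{z}dp$ along the band. Since $dp/d\theta$ is purely imaginary there and does not change sign between consecutive zeros of the numerator $z^4+Az^3+Bz^2+Az+1$, we want to show that numerator has exactly one zero-pair in the relevant gap. That zero placement follows from $\oint_a dp=0$: the integral over the gap must change sign. Then $h\ge0$ on $\Sigma_1$ from the outside, and $h$ is strictly monotone away from $\eta_1,\eta_2$.

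The maximum principle applied to the harmonic function $h-\epsilon\operatorname{Im}z$ on $D$ then gives $h>0$ in $D$. Reflecting through $z\mapsto1/\bar z$ gives the inside region, and the conjugate symmetry gives $\mathbb C^-$.

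\textbf{Main obstacle.} The delicate point is the sign of $h$ on the band itself, i.e. locating the real zeros of the numerator of $dp$ relative to the bands and fixing the correct sign of $R_\pm$ there. I would first try to settle it via the $a$-normalization: this forces exactly one zero in each gap of the $k$-line, so the numerator has no zero on the band, and the sign of $dp$ on the band is fixed by continuity from $\eta_1$.
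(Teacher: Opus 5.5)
The maximum-principle strategy is the right one, but you have the reality of $dp$ on $\mathbb T$ backwards, and the boundary analysis in Steps~2--4 fails because of it.

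The gap arcs of $\mathbb T$ are mapped by $k=\frac12(z+z^{-1})$ onto $(-1,\re\eta_2)\cup(\re\eta_1,1)$. There $\mathcal R(k)^2=(k+1)(k-\re\eta_1)(k-\re\eta_2)(k-1)<0$; these are exactly the cuts of RHP~\ref{RHp2}. Since $dk=-\sin\theta\,d\theta$ is real along $z=e^{i\theta}$, $dp$ is purely \emph{imaginary} along the gaps. The band $\Sigma_1$ is mapped onto $(\re\eta_2,\re\eta_1)$, where $\mathcal R^2>0$, so $dp$ is \emph{real} along the band. Because the base point $\eta_1$ is an endpoint of $\Sigma_1$, this gives $\operatorname{Im}p_\pm=0$ on both sides of $\Sigma_1$.

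So your conclusion ``$h=0$ on the gap arcs'' is false. It in fact contradicts the lemma: the open upper gap arcs lie in $\mathbb C^+\setminus\overline{\Sigma_1}$, where $h>0$ is what you are proving. The boundary data you use on $D=\{|z|>1,\ \operatorname{Im}z>0\}$ are therefore inverted. In reality $h$ is known to vanish on $\Sigma_1$ and is unknown on the gaps, and your zero-counting argument is spent on the band, where nothing needs to be shown.

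Step~1 has the same inversion at the level of cycles. Here $\omega_z=c_z\,dz/R$ with $c_z\in i\mathbb R$ is real along the gaps and imaginary along the bands. Since $\oint_a\omega_z=1$ and $\tau_z\in i\mathbb R$, the paper's $a$-cycle is the one passing through the gaps. The loop around $\Sigma_1$ is the $b$-cycle, whose $dp$-period $-\Omega_1$ is real and nonzero. Hence $p$ is not single-valued off the bands; it jumps by $\Omega_1$ across $\Sigma_0$, as used in Appendix~\ref{app:pro1}. Only $\operatorname{Im}p$ is single-valued.

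What $\oint_a dp=0$ actually gives is $\int_{\eta_1}^{\overline{\eta_1}}dp=0$ along the gap arc through $z=1$, and likewise through $z=-1$. Together with Schwarz symmetry this yields $p(1)=0$ and $p_\pm(-1)\in\mathbb R$, hence $\operatorname{Im}p=0$ on $\mathbb R\setminus\{0\}$. With your normalization (zero period around the band), $\operatorname{Im}p$ would be a nonzero constant on $(0,\infty)$ and the lemma would fail.

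The paper uses exactly these facts and never cuts along $\mathbb T$. It applies the minimum principle on $\{z\in\mathbb C^+\setminus\overline{\Sigma_1}:\varepsilon<|z|<R\}$. There the gap arcs are interior points: the jump of $p$ across $\Sigma_0$ is real, so $\operatorname{Im}p$ stays harmonic. On the boundary, $\operatorname{Im}p=0$ on $\mathbb R$ and on both sides of $\Sigma_1$, and the expansions at $0$ and $\infty$ give positivity on the two semicircles.

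If you want to keep $D$ and the reflection $z\mapsto1/\overline z$ (which is fine), you must additionally prove $h>0$ on the upper gap arcs. Your zero-placement idea does this once it is applied there. The vanishing gap integrals force $4k^2+2Ak+B-2$ to have exactly one root in each of $(-1,\re\eta_2)$ and $(\re\eta_1,1)$. So $h$ is unimodal on each upper gap arc, vanishes at its ends, and has the sign of $p'(\pm1)>0$.
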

	
	\begin{proof}
		We prove the upper-half-plane inequality. Since $p$ is analytic in $\mathbb C^+\setminus\overline{\Sigma_1}$, its imaginary part is harmonic there. By the Schwarz symmetry of $p$ and the boundary relations on the spectral band, the boundary values of $p$ on $\mathbb R\setminus\{0\}$ and on both sides of $\Sigma_1$ are real. Hence $\operatorname{Im}p(z)=0$ on these parts of the boundary.
		\par 
		For $0<\varepsilon<1<R$, consider
		$
		D_{\varepsilon,R}:=\left\{z\in\mathbb C^+\setminus\overline{\Sigma_1}:\varepsilon<|z|<R\right\}.
		$
		The asymptotic expansions of $p$ at infinity and at the origin give
		\begin{equation*}
			\operatorname{Im}p(z)=\frac12\operatorname{Im}z+o(1),\quad z\to\infty, \qquad 
			\operatorname{Im}p(z)=\frac{\operatorname{Im}z}{2|z|^2}+o(|z|^{-1}),\quad z\to0.
		\end{equation*}
		Thus $\operatorname{Im}p$ is nonnegative on the two semicircular parts of $\partial D_{\varepsilon,R}$ when $\varepsilon$ is sufficiently small and $R$ is sufficiently large. The minimum principle therefore yields
		$
		\operatorname{Im}p(z)\geq0$, $z\in\mathbb C^+\setminus\overline{\Sigma_1}.
		$
		Since $\operatorname{Im}p$ is positive near infinity, it is not identically zero. The strong minimum principle then gives
		$
		\operatorname{Im}p(z)>0$,  $z\in\mathbb C^+\setminus\overline{\Sigma_1}.
		$
		The lower-half-plane inequality follows from Schwarz symmetry.
	\end{proof}

	\subsection{Riemann--Hilbert problem formulation}\label{subrpf}
	In this subsection, we construct the reflectionless genus-one periodic background used throughout the subsequent scattering analysis. We first formulate the corresponding model Riemann--Hilbert problem on the $z$-plane and then transform it to the $k$-plane. The transformed problem can be solved explicitly in terms of the genus-one Abel map and the Riemann theta function. Finally, by relating the resulting matrix solution to the dNLS Lax pair, we recover a globally smooth, nonsingular genus-one periodic solution of the defocusing NLS equation.
	\par 
	For simplicity, throughout this subsection we continue to omit the superscripts $\ell$ and $r$ associated with the left and right backgrounds, respectively.
	\par 
	We begin by formulating the following RHP.
	\begin{RHP}\label{RH5}\
		\begin{enumerate}
			\item Analyticity: $ Y(z)$ is analytic for
			$z\in\mathbb{C}\setminus( \Sigma_1 \cup \Sigma_2 \cup \Sigma_0 \cup\{0\})$.
			\item Asymptotic behaviors: $ Y(z)= I +\mathcal{O}(z^{-1})$ as $z\to\infty$ and $ Y(z)=\frac{\sigma_1}{z}+\mathcal{O}(1)$ as $z\to0$.
			\item Symmetry: $ Y(z)=\sigma_1\overline{Y(\bar{z})}\sigma_1=z^{-1}Y(z^{-1})\sigma_1$.
			\item Jump matrices: 
			\begin{equation*}
				Y_+(z;x,t)=Y_-(z;x,t)\begin{cases}
					\begin{pmatrix}
						0 & -i  \\
						-i & 0
					\end{pmatrix}, & z\in \Sigma_1 ,
					\\
					\begin{pmatrix}
						0 & i \\
						i  & 0
					\end{pmatrix}, & z\in \Sigma_2,
					\\
					e^{i \Omega \sigma_3}, & z\in \Sigma_0.
				\end{cases}
			\end{equation*}
			\item $Y(z)$ admits fourth-root behavior at the endpoints.
		\end{enumerate}	
	\end{RHP}
	Here, $\Sigma_j$, $j =1,2$, are defined in \eqref{eq:Sigma12}, and $\Sigma_0 = (\eta_{2}, \overline{\eta_2})_{\mathbb{T}}$.
	This genus-one RHP \ref{RH5} can be solved on $z$-plane directly, see Appendix A of \cite{BertolaWangYanZhu2026}. Here we briefly introduce the necessary notation.
	As illustrated in Figure~\ref{fig:homology-bases}, we introduce the Abel map associated with the genus-one Riemann surface $\mathcal{S}^{(z)}$. Let $a$ and $b$ denote the canonical homology cycles, and let
	\begin{equation*}
		\omega_z=\frac{c_z\,dz}{R(z)}, \qquad c_z = \frac{i|\eta_1 - \eta_2^{-1}|}{4K(m)},
	\end{equation*}
	be the normalized holomorphic differential satisfying
	\begin{equation*}
		\oint_a\omega_z=1,
		\qquad
		\tau_z=\oint_b\omega_z    = \frac{iK(1-m)}{K(m)}.
	\end{equation*}
	The Abel map is defined by
	\begin{equation*}
		J_z(z)=\int_{1}^{z}\omega_z,
	\end{equation*}
	where, the base point $1$ is chosen to agree with the base point on the $k$-plane introduced below. One can also obtain the symmetry $ J_z(z) + J_z(z^{-1}) = 0$. These formulas are introduced in preparation for the subsequent analysis of the endpoint singularities.
	\par 
	For convenience here, we will use the transformation $ F(k) = \frac{1}{ \sqrt{1 - z(k)^{-2}} } Y(z(k))  $ in \cite{BertolaWangYanZhu2026} to obtain the following RHP on $k$-plane.
	\begin{RHP}\label{RHp2}\
		\begin{enumerate}
			\item Analyticity: $ F(k)$ is analytic for $k\in\mathbb{C}\setminus \left( (-1,\re \eta_{2}) \cup (\re \eta_{1},1) \right)$ with jump conditions:
			\begin{equation*}
				F_+(k)=F_-(k)\begin{cases}
					\begin{pmatrix}
						0 & -ie^{i \Omega } \\
						-ie^{-i \Omega } & 0
					\end{pmatrix}, & k\in (-1,\re \eta_{2}),
					\\
					\begin{pmatrix}
						0 & -i  \\
						-i  & 0
					\end{pmatrix}, & k\in (\re \eta_{1},1). 
				\end{cases}
			\end{equation*}
			\item Asymptotic behaviors: $ F(k)= I +\mathcal{O}(k^{-1})$ as $k\to\infty$.
			\item Symmetry conditions: $ F(k)=\sigma_1\overline{F(\bar{k})}\sigma_1$.
			\item $F(k)$ admits fourth-root behavior at the endpoints.
		\end{enumerate}
	\end{RHP}
	Now we solve the genus-one RHP \ref{RHp2}. 
	\par 
	Recall the function $\mathcal{R}(k) := \sqrt{(k + 1 )(k - \re\eta_1 )(k - \re\eta_2 )(k - 1 )}$ with $\mathcal{R}(k) \sim k^{2}$, as $k \to \infty$. We define the modified Abel differentials of the first kind as $\omega = \frac{c dk}{\mathcal{R}(k)} $ with $c := \frac{ i\sqrt{ (1 + \re \eta_1) (1 - \re \eta_2) } }{ 4 K(m_1)  }$. The homology basis for $\mathcal{S}^{(k)}$ is shown in Figure \ref{fhsk1}.  The period ratio is defined by $ \tau := \oint_{b_1} \omega = i \frac{ K( 1 - m_1 ) }{K(m_1)}$.
	\begin{figure}[htbp]
		\centering
		\begin{subfigure}[b]{0.45\textwidth}
			\centering
			\includegraphics[width=3cm]{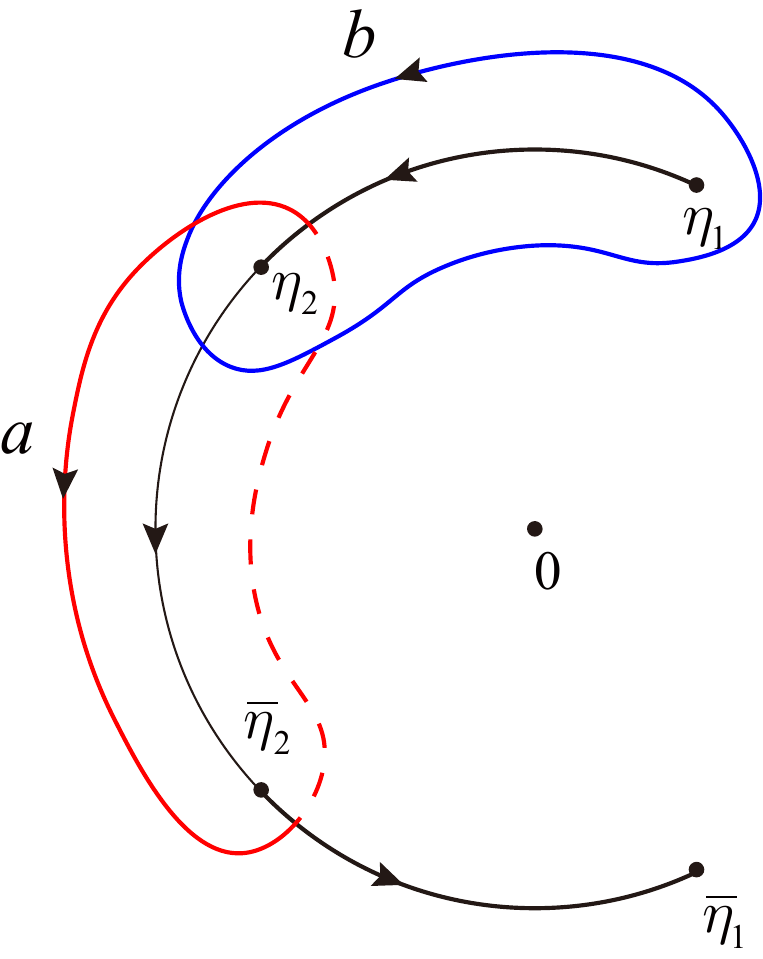}
			\caption{The canonical homology basis cycles on $\mathcal{S}^{(z)}$.}
			\label{fig:jump}
		\end{subfigure}
		\hfill
		\begin{subfigure}[b]{0.45\textwidth}
			\centering
			\includegraphics[width=\textwidth]{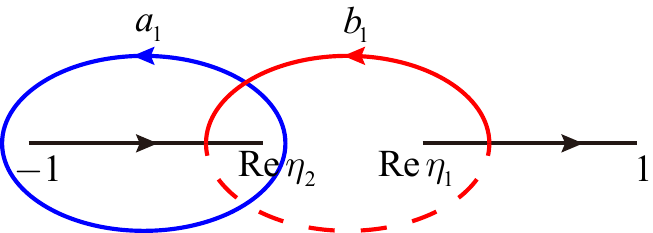}
			\caption{The homology basis for $\mathcal{S}^{(k)}$.}
			\label{fhsk1}
		\end{subfigure}
		\caption{The canonical homology bases on $\mathcal{S}^{(z)}$ and $\mathcal{S}^{(k)}$.}
		\label{fig:homology-bases}
	\end{figure}
	\par 
	Define the function $h(k) :=  \left[ \frac{ ( k - 1 ) ( k - \re \eta_2 ) }{ ( k + 1 ) ( k - \re \eta_1 ) }  \right]^{\frac{1}{4}} $, which satisfies
	\begin{equation*}
		\frac{h_+(k) + h_+(k)^{-1} }{2} = i \frac{ h_-(k) - h_-(k)^{-1} }{2}, \quad k \in (-1 , \re \eta_2) \cup ( \re \eta_1, 1 ).
	\end{equation*}
	\par 
	Consider the Abel-Jacobi map 
	\begin{equation}\label{Abelmap}
		J(k):= \int_{1}^{k} \omega,
	\end{equation}
	and the Riemann theta function
	\begin{equation*}
		\theta_3(k) := \theta_3(k; \tau) = \sum_{n \in \mathbb{Z}} \exp \left( \pi i n^2 \tau  + 2\pi i n k \right).
	\end{equation*}
	The solution of RHP \ref{RHp2} is then given by
	\begin{equation}\label{solF}
		F(k) = 
		\frac{\theta_3(0)}{\theta_3(\frac{\Omega}{2\pi})}
		\begin{pmatrix}
			\frac{h(k) + h(k)^{-1}}{2} \frac{\theta_3( -\frac{\Omega}{2\pi} + J(k) - J(\infty) )}
			{\theta_3( J(k) - J(\infty) )} & 
			\frac{h(k) - h(k)^{-1}}{-2} \frac{\theta_3( \frac{\Omega}{2\pi} + J(k) + J(\infty) )}
			{\theta_3( J(k) + J(\infty) )}   \\
			\frac{h(k) - h(k)^{-1}}{-2} \frac{\theta_3( -\frac{\Omega}{2\pi} + J(k) + J(\infty) )}
			{\theta_3( J(k) + J(\infty) )} & 
			\frac{h(k) + h(k)^{-1}}{2} \frac{\theta_3( \frac{\Omega}{2\pi} + J(k) - J(\infty) )}
			{\theta_3( J(k) - J(\infty) )}
		\end{pmatrix}.
	\end{equation}
	\begin{rmk}\label{rem:relation-Abel-maps}
		The Abel maps constructed directly on the $z$-plane and on the $k$-plane are related by the Landen transformation. More precisely,
		\begin{equation*}
			m=\frac{4\sqrt{m_1}}{(1+\sqrt{m_1})^2},
		\end{equation*}
		and the corresponding normalization constants coincide:
		\begin{equation*}
			c_z=c
			=
			\frac{
				i\sqrt{(1+\operatorname{Re}\eta_1)
					(1-\operatorname{Re}\eta_2)}
			}{
				4K(m_1)
			}.
		\end{equation*}
		Under the Joukowski transformation $k=\frac{1}{2}\left(z+z^{-1}\right)$, one has $ \omega = 2 \omega_z $. Consequently, the corresponding period ratios satisfy
		$
		\tau=2\tau_z.
		$
		\par 
		Since the base point is chosen to be $1$ in both constructions, the Abel map on the $k$-plane, pulled back to the $z$-plane, is related to the Abel map constructed directly on the $z$-plane by
		\begin{equation*}
			\mathcal J(z)
			:=
			J\left(\frac{z+z^{-1}}{2}\right)
			=
			2J_z(z),
			\quad
			\operatorname{mod}\bigl(\mathbb Z+\tau\mathbb Z\bigr).
		\end{equation*}
		Thus, the two constructions describe the same genus-one geometry with different period lattices: the direct $z$-plane construction uses $(J_z(z),\tau_z)$, whereas the pullback of the $k$-plane construction uses $(2J_z(z),2\tau_z)$.
		\par 
		We further obtain the endpoint values
		\begin{gather}
			J_z(\eta_1) = -\frac{1}{4},\quad J_z(\eta_2) = -\frac{1}{4} - \frac{\tau_z}{2},\quad J_z(\overline{\eta_1}) = \frac{1}{4},\quad J_z(\overline{\eta_2}) = \frac{1}{4} - \frac{\tau_z}{2}, 
			\\
			J(1) = 0,\quad J(\operatorname{Re}\eta_1) = -\frac{1}{2},\quad J(\operatorname{Re}\eta_2) = -\frac{1}{2} + \frac{\tau}{2},\quad J(-1) = \frac{\tau}{2}.
		\end{gather}
		Note that the interval $(\operatorname{Re}\eta_2,\operatorname{Re}\eta_1)$ is not a branch cut in the $k$-plane, and hence the Abel map has no upper and lower boundary values there as a function of the planar variable $k$. Instead, each $k\in(\operatorname{Re}\eta_2,\operatorname{Re}\eta_1)$ has two lifts to the two sheets of $\mathcal S^{(k)}$. Under the Joukowski transformation, the positive and negative sides of the corresponding band in the $z$-plane are mapped to these two lifts, respectively. Thus, the two values obtained from the $z$-plane should be interpreted as Abel integrals along different lifted paths on $\mathcal S^{(k)}$.
		\par 
		Taking the value at $\eta_2$ as an example, we obtain
		\begin{equation*}
			2J_z(\eta_2) = 2\int_{1}^{\eta_1} \omega_{z } + 2 \int_{\eta_1}^{\eta_2} \omega_{z+} 
			= -\frac{1}{2} - \tau_z
			= \int_{ 1 }^{\re \eta_1} \omega_+ + \left.\int_{ \re \eta_1 }^{\re \eta_2} \omega\right|_{\text{lower sheet}}
			= -\frac{1}{2} - \frac{\tau}{2}
			= J(\re \eta_2) - \tau.
		\end{equation*}
	\end{rmk}
	\par 
	Next, we connect the solution of RHP \ref{RH5} with the Lax pair \eqref{Laxpair}.
	\begin{pro}\label{prop:periodic-background}
		The matrix 
		\begin{equation*}
			F_0(x,t;k) = e^{i ((x - x_0 )p_{\infty}^{(k)}+t (q_{\infty}^{(k)}-1))\sigma_3}
			F(x,t;k) e^{-i[(x-x_0) p(k) + tq(k) ]\sigma_3},
		\end{equation*}
		solves the Lax pair \eqref{Laxpair} with potential 
		\begin{equation}\label{eq:u0}
			u_0(x,t) = \frac{ 2 - \re\eta_1 + \re\eta_2 }{2} 
			\frac{ \theta_3(0) \theta_3( \frac{\Omega}{2\pi}  + 2J(\infty) ) }{ \theta_3( \frac{\Omega}{2\pi} ) \theta_3( 2 J(\infty) )  }
			e^{-2i( (x - x_0 ) p_{\infty}^{(k)} + t (q_{\infty}^{(k)} - 1 ))},
		\end{equation}
		whose modulus can be expressed in terms of Jacobi elliptic functions:
		\begin{equation}\label{eq:modu0}
			|u_0(x,t)|^2 = \rho_1-(\rho_1-\rho_3)\operatorname{dn}^2\left(
			\sqrt{\rho_1-\rho_3}\left(x-(\operatorname{Re}\eta_1+\operatorname{Re}\eta_2)t-x_0\right)
			-K(m_1);m_1
			\right),
		\end{equation}
		where $\rho_1 = \frac{ (2 + \re\eta_1 - \re\eta_2)^2 }{4} $ and $ \rho_3 = \frac{ ( \re\eta_1 + \re\eta_2)^2 }{4} $.
	\end{pro}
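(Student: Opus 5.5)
The argument follows the standard dressing scheme for finite-gap solutions. First I show that the $x$- and $t$-logarithmic derivatives of $F_0$ are polynomial in $k$, of degrees one and two respectively. Then I identify the coefficients with the Lax matrices $\mathcal L$ and $\mathcal B$. The first step is to check that $F_0$ has \emph{$(x,t)$-independent} jumps on $(-1,\operatorname{Re}\eta_2)\cup(\operatorname{Re}\eta_1,1)$. Across $(\operatorname{Re}\eta_1,1)$ the Abel integral $p$ changes sign, i.e. $p_+=-p_-$. Across $(-1,\operatorname{Re}\eta_2)$ one has $p_++p_-=-\oint_b dp=\Omega_1$, and similarly $q_++q_-=\Omega_2$. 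Hence the factor $e^{-i[(x-x_0)p+tq]\sigma_3}$ converts the jump $\begin{pmatrix}0&-ie^{i\Omega}\\-ie^{-i\Omega}&0\end{pmatrix}$ of RHP~\ref{RHp2} into the constant matrix $-i\sigma_1$. Here $\Omega=\Omega_1(x-x_0)+\Omega_2 t$ up to the phase constant, which is absorbed by the choice of $x_0$, and the $a$-normalization $\oint_a dp=\oint_a dq=0$ guarantees single-valuedness. The left factor $e^{i(\cdot)\sigma_3}$ is independent of $k$, so it does not affect the jumps. At the endpoints, $F_0$ retains only fourth-root singularities.

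Next, set $L:=\partial_xF_0\,F_0^{-1}$. Since $\det F\equiv1$, $F_0^{-1}$ is well defined. $L$ has no jumps, and at the endpoints it has at worst $(k-k_j)^{-1/2}$ singularities, which are removable. So $L$ is entire. For large $k$, the expansion $p(k)=k+p_\infty^{(k)}+p_1^{(k)}/k+\dots$ from \eqref{eq:p(k)} and $F=I+F_1/k+\mathcal O(k^{-2})$ give $L=-ik\sigma_3+i[\sigma_3,\widetilde F_1]+\mathcal O(k^{-1})$, where $\widetilde F_1$ is $F_1$ conjugated by the left factor. The constant term $ip^{(k)}_\infty\sigma_3$ cancels exactly because of the left normalizing factor. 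By Liouville, $L=i\sigma_3(Q-kI)$ with $Q=-[\sigma_3,\widetilde F_1]\sigma_3$, which is off-diagonal. The symmetry $F=\sigma_1\overline{F(\bar k)}\sigma_1$ forces $Q_{12}=\overline{Q_{21}}$, so $Q$ has the required form with $u_0=2i(\widetilde F_1)_{21}$ up to sign convention. The same procedure for $\partial_tF_0F_0^{-1}$ uses $q=-2k^2+q_\infty^{(k)}+o(1)$ and the expansion of $F$ to order $k^{-2}$, together with the shift $q_\infty^{(k)}-1$, which produces the $+I$ in $Q^2-I$. The standard computation then yields $i\partial_tF_0=\mathcal BF_0$ with $\mathcal B=-2ik\mathcal L-(Q^2-I)\sigma_3+iQ_x$. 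The key identities relating $F_2$ to $F_1$ follow from the $x$-equation at order $k^{-1}$.

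Formula \eqref{eq:u0} then comes from expanding \eqref{solF} at $k=\infty$. We have $h(k)=1+\frac{-1-\operatorname{Re}\eta_2+\operatorname{Re}\eta_1+1}{4k}+\dots$, so $\frac{h-h^{-1}}{-2}=-\frac{2-\operatorname{Re}\eta_1+\operatorname{Re}\eta_2}{4k}\cdot(\pm1)+\dots$. In the theta quotient, evaluated at $J(k)\to J(\infty)$, the arguments become $2J(\infty)$. Multiplying by the phase from the left conjugation gives $e^{-2i(\cdots)}$. I would carefully track signs and the factor $2i$ against the prefactor $\frac{2-\operatorname{Re}\eta_1+\operatorname{Re}\eta_2}{2}$.

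I expect the main obstacle to be the modulus formula \eqref{eq:modu0}. My plan is to avoid direct theta manipulations. The finite-gap trace formula (equivalently, the $k^{-2}$ term of the diagonal of $F$) gives $|u_0|^2$ in terms of $\mu(x,t)$, the zero of the $(1,2)$ entry on the Riemann surface: $|u_0|^2=\rho_1+\rho_3$-type constant minus a linear function of $\mu$. Here $\mu$ solves the Dubrovin equation $\mu_x=\pm2i\mathcal R(\mu)$ with the branch points $\pm1,\operatorname{Re}\eta_1,\operatorname{Re}\eta_2$. Integrating this ODE gives $\mu$ as a $\operatorname{dn}^2$ (equivalently $\operatorname{sn}^2$) function with modulus $m_1$ and frequency $\sqrt{(1+\operatorname{Re}\eta_1)(1-\operatorname{Re}\eta_2)}=\sqrt{\rho_1-\rho_3}$. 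An alternative route is to verify that $n=|u_0|^2$ satisfies $n_x^2=4(\rho_1-n)(n-\rho_2)(n-\rho_3)$ with appropriate $\rho_2$. The velocity $-v=\operatorname{Re}\eta_1+\operatorname{Re}\eta_2$ comes from $\Omega_2/\Omega_1$ in \eqref{eq:Omega12}. The shift $-K(m_1)$ and the phase $x_0$ are then fixed by matching at $\Omega=0$, where $J(\mu)$ sits at a known half-period of the endpoint table in Remark~\ref{rem:relation-Abel-maps}.
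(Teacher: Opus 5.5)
Your derivation of the Lax pair follows the paper's argument: constant jump $-i\sigma_1$, removable endpoint singularities, Liouville for $\partial_xF_0F_0^{-1}$ and $\partial_tF_0F_0^{-1}$, and the order-$k^{-1}$ identity \eqref{Y123} to eliminate $F^{(2)}$. Formula \eqref{eq:u0} is indeed read off from \eqref{solF}. There are three slips to fix in this part.

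First, the conjugated jump on $(-1,\re\eta_2)$ equals $-i\sigma_1$ exactly when $(x-x_0)(p_++p_-)+t(q_++q_-)\equiv-\Omega \pmod{2\pi}$. With the signs you wrote ($p_++p_-=\Omega_1$, $q_++q_-=\Omega_2$) the phase doubles instead of cancelling. Second, your own $Q=-[\sigma_3,\widetilde F_1]\sigma_3$ equals $2\,\mathrm{offdiag}(\widetilde F_1)$, so $u_0=2(\widetilde F_1)_{21}$ with no factor $i$, as in the paper. Third, $h(k)=1-\frac{2-\re\eta_1+\re\eta_2}{4k}+\mathcal O(k^{-2})$, since the factor $(k+1)^{-1/4}$ contributes $-\frac1{4k}$. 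This gives exactly the prefactor $\frac{2-\re\eta_1+\re\eta_2}{2}$.

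The genuine gap is your route to \eqref{eq:modu0}. A trace formula that makes $|u_0|^2$ an affine function of the Dubrovin point is the KdV/Schr\"odinger one, and it fails for the Zakharov--Shabat operator. To see this, put $W:=\mathcal R(k)F_0\sigma_3F_0^{-1}=\begin{pmatrix}A&B\\ C&-A\end{pmatrix}$. It is a polynomial, it satisfies $W_x=[\,i\sigma_3(Q_0-kI),W\,]$, and $A^2+BC=\mathcal R(k)^2$. Comparing coefficients gives $A=k^2-\frac{s_1}{2}k+a_0$ with $a_0=\frac12\bigl(s_2+|u_0|^2-\frac{s_1^2}{4}\bigr)$, and $B=-\overline{u_0}\,(k-\mu)$. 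Here $s_1=\re\eta_1+\re\eta_2$, $s_2=\re\eta_1\re\eta_2-1$, and
\[
\mu=\frac{s_1}{2}-\frac{i}{2}\,\partial_x\log\overline{u_0},\qquad \mu_x=-2iA(\mu),\qquad |u_0|^2=i\mu_x-2\mu^2+s_1\mu-s_2+\frac{s_1^2}{4}.
\]
So the linear trace formula controls $\partial_x\log u_0$, not $|u_0|^2$, which depends on $\mu^2$ and $\mu_x$. Moreover $\mu$ is genuinely complex, with $\operatorname{Im}\mu=-\partial_x|u_0|^2/(4|u_0|^2)$. It therefore does not run along a real oval, and integrating the Dubrovin equation cannot produce a real $\operatorname{dn}^2$ profile.

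The paper does not derive \eqref{eq:modu0} in its proof; it defers to the cited references. Appendix~\ref{subsec:classical-elliptic} supplies the mechanism, which is your alternative route. There $n=|u_0|^2$ satisfies $n_x^2=4(n-\rho_1)(n-\rho_2)(n-\rho_3)$; note that your sign makes the right side negative on $[\rho_3,\rho_2]$. The $\rho_j$ of \eqref{eq:rho-eta-relations} are obtained by comparing \eqref{eq:classical-spectral-curve} with $\mathcal R(k)^2$. You can also get this ODE directly from the $k^1$ and $k^0$ coefficients of $A^2+BC=\mathcal R^2$. The first makes $\operatorname{Im}(\overline{u_0}\,\partial_xu_0)$ affine in $n$, and the second then makes $n_x^2$ a cubic in $n$.

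The phase needs no divisor point. At $\Omega\equiv0$ the theta quotient in \eqref{eq:u0} equals $1$, so $|u_0|^2=\frac{(2-\re\eta_1+\re\eta_2)^2}{4}=\rho_2$. This matches $\operatorname{dn}^2(K(m_1);m_1)=1-m_1$ in \eqref{eq:modu0}.
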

	\begin{proof}
		We first observe that the jump matrices of \(F_0\) are independent of \(x\) and \(t\). The jump conditions of $F_0$ are given by
		\begin{equation*}
			F_{0+}(k) =  F_{0-}(k) (-i \sigma_1), \quad  k \in (-1, \re\eta_2) \cup ( \re\eta_1 , 1 ).
		\end{equation*}
		Hence the two matrices
		$
		(F_0)_xF_0^{-1}$,
		$
		(F_0)_tF_0^{-1}
		$
		have no jumps across the contour. The possible singularities at the branch points are removable by the local growth condition. Therefore both matrices are entire functions of \(k\).
		
		Assume that, as \(k\to\infty\),
		\begin{equation*}
			F=I+\frac{F^{(1)}}{k}
			+\frac{F^{(2)}}{k^2}+\mathcal{O}(k^{-3}), \qquad 
			F^{-1}=I-\frac{F^{(1)}}{k}
			+\frac{(F^{(1)})^2-F^{(2)}}{k^2}+\mathcal{O}(k^{-3}),
		\end{equation*}
		then
		\[
		F\sigma_3F^{-1}
		=
		\sigma_3-\frac{[\sigma_3,F^{(1)}]}{k}
		+\frac{[\sigma_3,F^{(1)}]F^{(1)}
			-[\sigma_3,F^{(2)}]}{k^2}
		+\mathcal{O}(k^{-3}).
		\]
		Using \eqref{eq:p(k)}, we get
		\[
		\begin{aligned}
			(F_0)_xF_0^{-1}
			&=ip_{\infty}^{(k)}\sigma_3 +  e^{i ((x - x_0 )p_{\infty}^{(k)}+t (q_{\infty}^{(k)}-1))\sigma_3} 
			\left( F_xF^{-1} - i p(k) F \sigma_3 F^{-1} \right) 
			e^{-i ((x - x_0 )p_{\infty}^{(k)}+t (q_{\infty}^{(k)}-1))\sigma_3}  \\
			&=
			-ik\sigma_3
			+ie^{i ((x - x_0 )p_{\infty}^{(k)}+t (q_{\infty}^{(k)}-1))\sigma_3} [\sigma_3,F^{(1)}]
			e^{-i ((x - x_0 )p_{\infty}^{(k)}+t (q_{\infty}^{(k)}-1))\sigma_3}+\mathcal{O}(k^{-1}).
		\end{aligned}
		\]
		Since the left-hand side is entire and grows at most linearly at infinity, Liouville's theorem gives
		\[
		(F_0)_xF_0^{-1} =
		-ik\sigma_3+ie^{i ((x - x_0 )p_{\infty}^{(k)}+t (q_{\infty}^{(k)}-1))\sigma_3} [\sigma_3,F^{(1)}]
		e^{-i ((x - x_0 )p_{\infty}^{(k)}+t (q_{\infty}^{(k)}-1))\sigma_3}.
		\]
		Define
		\begin{equation}\label{eq:Q_0}
			Q_0(x,t)=
			\begin{pmatrix}
				0& 2 F^{(1)}_{12} e^{2i ((x - x_0 )p_{\infty}^{(k)}+t (q_{\infty}^{(k)}-1))} \\
				2 F^{(1)}_{21} e^{-2i ((x - x_0 )p_{\infty}^{(k)}+t (q_{\infty}^{(k)}-1))} &0
			\end{pmatrix}.
		\end{equation}
		Then
		\[
		e^{i ((x - x_0 )p_{\infty}^{(k)}+t (q_{\infty}^{(k)}-1))\sigma_3} [\sigma_3,F^{(1)}]
		e^{-i ((x - x_0 )p_{\infty}^{(k)}+t (q_{\infty}^{(k)}-1))\sigma_3}=\sigma_3 Q_0,
		\]
		and therefore
		\[
		(F_0)_xF_0^{-1}
		=
		-ik\sigma_3+i\sigma_3Q_0
		=
		i\sigma_3(Q_0-kI).
		\]
		Thus
		\[
		(F_0)_x=\mathcal L_0F_0,
		\qquad
		\mathcal L_0=i\sigma_3(Q_0-kI).
		\]
		
		It remains to check the time part. From the coefficient of \(k^{-1}\) in the preceding \(x\)-expansion one obtains
		\begin{equation}\label{Y123}
			(F^{(1)})_x
			=
			i\left(
			[\sigma_3,F^{(1)}](F^{(1)} - p_{\infty}^{(k)} I)
			-[\sigma_3,F^{(2)}]
			+p_1^{(k)}\sigma_3
			\right).
		\end{equation}
		The off-diagonal entries in \eqref{Y123} give
		\begin{equation}\label{Y123y}
			F^{(1)}_{12x} = 2i (F^{(1)}_{12} F^{(1)}_{22} - F^{(2)}_{12} - F_{12}^{(1)} p_{\infty}^{(k)}),
			\qquad
			F^{(1)}_{21x} = 2i (F^{(2)}_{21} - F^{(1)}_{21} F^{(1)}_{11} + F_{21}^{(1)}  p_{\infty}^{(k)}  ),
		\end{equation}
		and from definition \eqref{eq:Q_0}, we have
		\begin{equation}\label{eq:Q1221}
			\begin{aligned}
				(Q_0)_{12,x} &= 2 e^{2i ( (x-x_0) p_{\infty}^{(k)} + t ( q_{\infty}^{(k)} - 1 ) )} \left( F_{12,x}^{(1)} + 2i p_{\infty}^{(k)} F_{12}^{(1)} \right), \\
				(Q_0)_{21,x} &= 2 e^{-2i ( (x-x_0) p_{\infty}^{(k)} + t ( q_{\infty}^{(k)} - 1 ) )} \left( F_{21,x}^{(1)} - 2i p_{\infty}^{(k)} F_{21}^{(1)} \right).
			\end{aligned}
		\end{equation}
		Now, by \eqref{eq:p(k)}, we have
		\[
		\begin{aligned}
			(F_0)_tF_0^{-1}
			=&
			i(q_{\infty}^{(k)} - 1)\sigma_3
			+e^{i ((x - x_0 )p_{\infty}^{(k)}+t (q_{\infty}^{(k)}-1))\sigma_3} \left(
			F_t F^{-1}  - i q(k) F \sigma_3 F^{-1}\right)
			e^{-i ((x - x_0 )p_{\infty}^{(k)}+t (q_{\infty}^{(k)}-1))\sigma_3}  \\
			=&
			2ik^2\sigma_3 -i\sigma_3 
			+
			e^{i ((x - x_0 )p_{\infty}^{(k)}+t (q_{\infty}^{(k)}-1))\sigma_3} \\
			&\times\left(-2ik[\sigma_3,F^{(1)}]  
			+2i\left(
			[\sigma_3,F^{(1)}]F^{(1)}
			-[\sigma_3,F^{(2)}]
			\right)\right)
			e^{-i ((x - x_0 )p_{\infty}^{(k)}+t (q_{\infty}^{(k)}-1))\sigma_3}
			+\mathcal{O}(k^{-1}).
		\end{aligned}
		\]
		Again, the left-hand side is entire and grows at most quadratically, so the negative powers vanish. Hence
		\[
		\begin{aligned}
			(F_0)_tF_0^{-1}
			=&
			2ik^2\sigma_3 -i\sigma_3 
			+
			e^{i ((x - x_0 )p_{\infty}^{(k)}+t (q_{\infty}^{(k)}-1))\sigma_3} \\
			&\times\left(-2ik[\sigma_3,F^{(1)}]  
			+2i\left(
			[\sigma_3,F^{(1)}]F^{(1)}
			-[\sigma_3,F^{(2)}]
			\right)\right)
			e^{-i ((x - x_0 )p_{\infty}^{(k)}+t (q_{\infty}^{(k)}-1))\sigma_3}.
		\end{aligned}
		\]
		A direct calculation gives
		\begin{equation}\label{eq:F1F2}
			2[\sigma_3,F^{(1)}]F^{(1)}
			-2[\sigma_3,F^{(2)}]
			=
			2\begin{pmatrix}
				2 F^{(1)}_{12} F^{(1)}_{21} & 2 F^{(1)}_{12} F^{(1)}_{22}  - 2 F^{(2)}_{12}\\
				2 F^{(2)}_{21} -2 F^{(1)}_{21}  F^{(1)}_{11} &-2 F^{(1)}_{12} F^{(1)}_{21}
			\end{pmatrix}.
		\end{equation}
		Using \eqref{Y123y}, \eqref{eq:Q1221} and \eqref{eq:F1F2}, we obtain
		\[
		\begin{aligned}
			e^{i\left((x-x_0)p_{\infty}^{(k)}
				+t\left(q_{\infty}^{(k)}-1\right)\right)\sigma_3}
			2i\left(
			[\sigma_3,F^{(1)}]F^{(1)}
			-[\sigma_3,F^{(2)}]
			\right)
			e^{-i\left((x-x_0)p_{\infty}^{(k)}
				+t\left(q_{\infty}^{(k)}-1\right)\right)\sigma_3}
			=
			iQ_0^2\sigma_3+(Q_0)_x.
		\end{aligned}
		\]
		Consequently,
		\[
		i(F_0)_tF_0^{-1}
		=
		-2k^2\sigma_3
		+2k\sigma_3Q_0
		-(Q_0^2-I)\sigma_3
		+i(Q_0)_x.
		\]
		Since
		\[
		-2ik\mathcal L_0
		=
		-2ik\,i\sigma_3(Q_0-kI)
		=
		-2k^2\sigma_3+2k\sigma_3Q_0,
		\]
		we finally obtain
		\[
		i(F_0)_t
		=
		\left[
		-2ik\mathcal L_0
		-(Q_0^2-I)\sigma_3
		+i(Q_0)_x
		\right]F_0.
		\]
		This is precisely the time part of the dNLS Lax pair. Hence \(F_0\) solves the Lax pair with potential \(Q_0\). In particular,
		$u_0=2F^{(1)}_{21} e^{-2i ((x - x_0 )p_{\infty}^{(k)}+t (q_{\infty}^{(k)}-1))} = 2F^{(1)}_{21} e^{-2i ((x - x_0 )p_{\infty}+t q_{\infty})}$.
		For the derivation of \eqref{eq:modu0}, the reader is referred to \cite{Jenkins2015,WangYan2025}.
	\end{proof}

	\begin{rmk}
		Recall that $v=-\re(\eta_1+\eta_2)$ and define $ \varpi_0=2\left(q_{\infty}^{(k)}-1-vp_{\infty}^{(k)}\right)+\frac{v^2}{4} $. Then the potential $u_0$ in \eqref{eq:u0} can be rewritten exactly in the classical travelling-wave form
		\begin{equation}\label{eq:classical-trav}
			u_0^{\rm cl}(x,t)
			=
			e^{-i\varpi_0t}
			e^{-i\left(\frac{v}{2}x+\frac{v^2}{4}t\right)}
			\psi(x+vt-x_0),
		\end{equation}
		for a function $\psi$ which satisfies \eqref{eq:classical-profile-ode}. For an introduction to classical traveling wave solutions, readers are referred to Appendix~\ref{subsec:classical-elliptic}. Hence the potential reconstructed from the RHP \ref{RHp2} coincides with the classical travelling-wave after the corresponding parameters are identified. 
	\end{rmk}

	\begin{rmk}
		A natural question is why the RHP \ref{RH5} associated with the ``free'' solution is formulated with the inversion symmetry
		$
		Y(z)=z^{-1}Y(z^{-1})\sigma_1.
		$
		A reader familiar with \cite{Jenkins2016} may wonder what would happen if the corresponding ``free'' solution instead satisfied the symmetry with a negative sign. In fact, starting from $Y(z)$, we may introduce the transformation
		$
		\widetilde{Y}(z)=\sigma_3Y(z)\sigma_3.
		$
		Then the symmetry of $\widetilde{Y}$ becomes
		\[
		\widetilde{Y}(z)
		=
		-z^{-1}\widetilde{Y}(z^{-1})\sigma_1.
		\]
		Therefore $\widetilde{Y}$ satisfies the jump relation
		$
		\widetilde{Y}_+(z)
		=
		\widetilde{Y}_-(z)\widetilde{V}(z)$,
		$
		\widetilde{V}(z)=\sigma_3V^Y(z)\sigma_3,
		$ where $V^Y(z)$ is the jump matrices of RHP \ref{RH5}
		Thus, the sign in the symmetry cannot be changed independently of the jump matrices and the normalization at the origin.
		\par 
		Indeed, if the background eigenfunction is defined by
		\[
		\Psi_0(x;z)
		=
		e^{i(x-x_0)p_\infty\sigma_3}
		Y(z;x,0)
		e^{-i(x-x_0)p(z)\sigma_3},
		\]
		then the eigenfunction constructed from $\widetilde{Y}$ satisfies
		$
		\widetilde{\Psi}_0(x;z)
		=
		\sigma_3\Psi_0(x;z)\sigma_3.
		$
		If $\Psi_0$ solves
		$
		\partial_x\Psi_0
		=
		i\sigma_3\bigl(Q_0-k(z)I\bigr)\Psi_0,
		$
		then $\widetilde{\Psi}_0$ solves the same type of Lax equation with
		\[
		\widetilde{Q}_0
		=
		\sigma_3Q_0\sigma_3 = -Q_0 = \begin{pmatrix}
			0 & -\overline{u_0}\\
			-u_0 & 0
		\end{pmatrix}.
		\]
		Therefore, under this particular transformation, the negative inversion symmetry corresponds to a global phase shift of $\pi$ in the background solution. This is also consistent with the backgrounds $+1$ and $-1$ considered in \cite{Jenkins2016}.
	\end{rmk}

	\section{From exact periodic steps to asymptotically periodic data}\label{Secfast}
	So far, we have established a connection between RHP \ref{RHp2} on the $k$-plane and the Lax pair \eqref{Laxpair}. We now return to the $z$-plane for $t = 0$ and use the solution of RHP \ref{RH5} to relate the problem to asymptotically step-like initial backgrounds. We then analyze the associated Jost solutions and carry out the standard direct scattering procedure.
	\par 
	Recall that the spacial part of the Lax pair corresponding to parameter $z$ is
	\begin{equation}\label{eq:direct_z_lax}
		\Phi_x=i\sigma_3\left(Q(x)- \frac{ z + z^{-1} }{2}  I \right)\Phi,
	\end{equation}
	where
	$
	Q(x)=
	\begin{pmatrix}
		0 & \overline{u(x)} \\
		u(x) & 0
	\end{pmatrix}.
	$
	We have already obtained the solution $F(k)$ of RHP~\ref{RHp2} in the  $k$-plane. Therefore, by applying the transformation $ \sqrt{1 - z(k)^{-2}} F(k(z)) =  Y(z)  $, we obtain the solution $Y(z)$ of RHP~\ref{RH5} in the $z$-plane.
	\par 
	In Subsection~\ref{subrpf}, we considered only a genus-one RHP associated  with a single periodic background, since the only spectral bands chosen  on the unit circle in the $z$-plane were
	$
	\Sigma_1\cup \Sigma_2.
	$
	In what follows, we consider two sets of spectral bands,
	$
	\Sigma^{\ell}
	$
	and
	$
	\Sigma^r,
	$
	as illustrated in Figure~\ref{fig:six-cases}. These two sets correspond,  respectively, to the periodic-wave backgrounds approached by the initial  data as $x\to-\infty$ and $x\to+\infty$. Accordingly, in the remainder  of the paper, we will attach the superscripts $\ell$ and $r$ to certain  parameters and functions whenever it is necessary to distinguish whether  they are associated with the left or right asymptotic background.
	\par 
	For $s\in\{\ell,r\}$, let $u_0^s(x)$ be the background solution as $x\to\infty^s$, where $ \infty^{\ell} = -\infty $ and $ \infty^{r} = + \infty $.  Define
	\begin{equation*}
		Q_0^s(x)=
		\begin{pmatrix}
			0 & \overline{u_0^s(x)}\\
			u_0^s(x) & 0
		\end{pmatrix},
		\qquad
		\Delta Q^s(x)=Q(x)-Q_0^s(x).
	\end{equation*}
	\par 
	For $t = 0 $, the background fundamental matrix is
	\begin{equation}\label{eq:direct_z_background}
		\Psi_0^s(x;z)
		= e^{i  (x - x_0^s )p_{\infty}^s\sigma_3}
		Y^s(z;x,0)
		e^{-i(x-x_0^s)p^s(z)\sigma_3}.
	\end{equation}
	By construction, $\Psi_0^s(x;z)$ solves \eqref{eq:direct_z_lax} with
	$Q(x)$ replaced by $Q_0^s(x)$.
	\par 
	We define the Jost solution $\Psi^s(x;z)$ as the solution of \eqref{eq:direct_z_lax} satisfying
	\begin{equation*}
		\Psi^s(x;z)
		= e^{i  (x - x_0^s )p_{\infty}^s\sigma_3}
		Y^s(z;x,0)\left(I+o(1)\right)
		e^{-i(x-x_0^s)p^s(z)\sigma_3},
		\qquad x\to\infty^s .
	\end{equation*}
	Equivalently, we introduce $M^s(x;z)$ by
	\begin{equation}\label{eq:direct_z_M_def}
		\Psi^s(x;z)
		= e^{i  (x - x_0^s )p_{\infty}^s\sigma_3}
		Y^s(z;x,0)M^s(x;z)
		e^{-i(x-x_0^s)p^s(z)\sigma_3}.
	\end{equation}
	Then the normalization condition is
	\begin{equation}\label{eq:direct_z_M_norm}
		M^s(x;z)\to I,
		\qquad x\to\infty^s .
	\end{equation}
	
	\begin{lem}\label{lem:direct_z_M_equation}
		Suppose that $\Psi^s(x;z)$ is a solution of \eqref{eq:direct_z_lax}, and let $M^s(x;z)$ be defined by \eqref{eq:direct_z_M_def}. Then $M^s(x;z)$ satisfies
		\begin{equation}\label{eq:direct_z_M_ODE}
			\partial_x M^s
			={}
			-i p^s(z)[\sigma_3,M^s]
			+
			Y^s(z;x,0)^{-1}
			e^{-i(x-x_0^s)p_\infty^s\sigma_3}
			i\sigma_3\Delta Q^s(x)
			e^{i(x-x_0^s)p_\infty^s\sigma_3}
			Y^s(z;x,0)M^s .
		\end{equation}
		Together with the normalization condition \eqref{eq:direct_z_M_norm}, this differential equation is equivalent to the Volterra integral equation
		\begin{align}
			M^s(x;z)
			={}&I+
			\int_{\infty^s}^{x}
			e^{-i(x-y)p^s(z)\sigma_3}
			Y^s(z;y,0)^{-1}
			e^{-i(y-x_0^s)p_\infty^s\sigma_3}
			\nonumber\\
			&\qquad\times
			i\sigma_3\Delta Q^s(y)
			e^{i(y-x_0^s)p_\infty^s\sigma_3}
			Y^s(z;y,0)M^s(y;z)
			e^{i(x-y)p^s(z)\sigma_3}
			\,dy .
			\label{eq:direct_z_Volterra}
		\end{align}
	\end{lem}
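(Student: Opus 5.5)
We plan to obtain the differential equation by substituting the factorization \eqref{eq:direct_z_M_def} into \eqref{eq:direct_z_lax}, and then to integrate it by variation of constants. Write $E^s(x):=e^{i(x-x_0^s)p_\infty^s\sigma_3}$ and $\Theta^s(x;z):=e^{-i(x-x_0^s)p^s(z)\sigma_3}$. Then \eqref{eq:direct_z_background} reads $\Psi_0^s=E^sY^s\Theta^s$.

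The key input is that $\Psi_0^s$ solves \eqref{eq:direct_z_lax} with $Q$ replaced by $Q_0^s$; this is Proposition~\ref{prop:periodic-background} at $t=0$, transferred to the $z$-plane via $Y=\sqrt{1-z^{-2}}F$. The scalar factor $\sqrt{1-z^{-2}}$ is independent of $x$ and so does not affect the $x$-equation. Differentiating $\Psi_0^s=E^sY^s\Theta^s$ gives
\[
\partial_x(E^sY^s)=\bigl(i\sigma_3(Q_0^s-kI)\bigr)E^sY^s+E^sY^s\,ip^s(z)\sigma_3 .
\]
Since $\det Y^s$ is constant in $x$ and nonzero away from the contour, $Y^s$ is invertible there.

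Now we differentiate $\Psi^s=E^sY^sM^s\Theta^s$. Using the identity above, we get
\[
\partial_x\Psi^s=\bigl[i\sigma_3(Q_0^s-kI)E^sY^sM^s+E^sY^s\bigl(ip^s\sigma_3M^s-ip^sM^s\sigma_3+\partial_xM^s\bigr)\bigr]\Theta^s .
\]
Setting this equal to $i\sigma_3(Q_0^s+\Delta Q^s-kI)\Psi^s$, the $Q_0^s-kI$ terms cancel. Multiplying on the left by $(E^sY^s)^{-1}=Y^{s,-1}E^{s,-1}$ and on the right by $\Theta^{s,-1}$ then yields \eqref{eq:direct_z_M_ODE}. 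Here we use $ip^s\sigma_3M^s-ip^sM^s\sigma_3=ip^s[\sigma_3,M^s]$, which moves to the right-hand side with a minus sign. The converse, that \eqref{eq:direct_z_M_ODE} implies $\Psi^s$ solves \eqref{eq:direct_z_lax}, follows by reading the same computation backwards.

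For the integral equation, we set $N(x):=e^{ixp^s\sigma_3}M^s(x)e^{-ixp^s\sigma_3}$. This removes the commutator term, and \eqref{eq:direct_z_M_ODE} becomes
\[
\partial_xN=e^{ixp^s\sigma_3}G^s(x)M^s e^{-ixp^s\sigma_3},
\]
where $G^s$ denotes the potential term $Y^{s,-1}E^{s,-1}i\sigma_3\Delta Q^sE^sY^s$. We integrate from $\infty^s$ to $x$, using $M^s\to I$ from \eqref{eq:direct_z_M_norm}. One subtlety is that $e^{ixp^s\sigma_3}$ need not have a limit, so we conjugate back before passing to the limit. Concretely, we write $M^s(x)-e^{-i(x-X)p^s\sigma_3}M^s(X)e^{i(x-X)p^s\sigma_3}$ as the integral from $X$ to $x$, and then let $X\to\infty^s$. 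Conjugating back by $e^{-ixp^s\sigma_3}$ gives exactly the kernel $e^{-i(x-y)p^s\sigma_3}(\cdots)e^{i(x-y)p^s\sigma_3}$ in \eqref{eq:direct_z_Volterra}.

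Conversely, differentiating \eqref{eq:direct_z_Volterra} recovers \eqref{eq:direct_z_M_ODE}. Moreover, the integral term tends to $0$ as $x\to\infty^s$ whenever the kernel is integrable.

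The main obstacle is justifying that the boundary term $e^{-i(x-X)p^s\sigma_3}(M^s(X)-I)e^{i(x-X)p^s\sigma_3}$ vanishes as $X\to\infty^s$. On the real axis and the bands, $p^s$ is real, so the conjugation is bounded and the limit follows directly from $M^s\to I$. For complex $z$, one of the off-diagonal entries carries a factor $e^{\pm2i(x-X)p^s}$, which grows as $X\to\infty^s$. We will handle this column by column, using Lemma~\ref{lemfoin} to choose the half-plane in which each column's exponential factor decays. Equivalently, we will interpret the equivalence for each column on its natural domain, which is exactly how the Jost solutions are used afterwards. We will also use the boundedness of $Y^s$ and $Y^{s,-1}$ in $x$, which holds because $Y^s$ is quasi-periodic through $\Omega$ via theta functions, together with $\Delta Q^s\in L^1$. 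Together these give integrability of the kernel.
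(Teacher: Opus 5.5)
Your proposal is correct and follows the paper's proof essentially step for step: the paper also uses the background equation to obtain $\partial_x H^s=i\sigma_3(Q_0^s-kI)H^s+ip^s(z)H^s\sigma_3$ for $H^s=e^{i(x-x_0^s)p_\infty^s\sigma_3}Y^s$, compares with the full Lax equation to get \eqref{eq:direct_z_M_ODE}, and integrates after conjugating by $e^{ixp^s(z)\sigma_3}$; your column-by-column handling of the boundary term $e^{-i(x-X)p^s\sigma_3}M^s(X)e^{i(x-X)p^s\sigma_3}$ via Lemma~\ref{lemfoin} is more careful than the paper, which simply invokes $M^s\to I$. One small slip: $\det Y^s=1-z^{-2}$ vanishes at $z=\pm1$, and $z=1$ does not lie on the jump contour, so $Y^s$ is not invertible ``away from the contour'' at that point; $z=\pm1$ must simply be excluded, which is harmless because the lemma's formula involves $Y^s(z;x,0)^{-1}$ and is undefined there anyway.
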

	
	\begin{proof}
		For brevity, set
		\begin{equation}\label{def:H}
			H^s(x;z)
			=
			e^{i(x-x_0^s)p_\infty^s\sigma_3}
			Y^s(z;x,0).
		\end{equation}
		Then the background solution and the Jost solution can be written as
		\[
		\Psi_0^s(x;z)
		=
		H^s(x;z)
		e^{-i(x-x_0^s)p^s(z)\sigma_3},
		\qquad
		\Psi^s(x;z)
		=
		H^s(x;z)M^s(x;z)
		e^{-i(x-x_0^s)p^s(z)\sigma_3}.
		\]
		Since $\Psi_0^s$ solves the background equation, we have
		$
		\partial_x\Psi_0^s
		=
		i\sigma_3
		\bigl(Q_0^s(x)-k(z)I\bigr)\Psi_0^s.
		$
		Differentiating the expression for $\Psi_0^s$ gives
		\[
		\left(
		\partial_x H^s
		-i p^s(z)H^s\sigma_3
		\right)
		e^{-i(x-x_0^s)p^s(z)\sigma_3}
		=
		i\sigma_3
		\bigl(Q_0^s(x)-k(z)I\bigr)
		H^s
		e^{-i(x-x_0^s)p^s(z)\sigma_3}.
		\]
		Consequently,
		\begin{equation}\label{eq:Hx_background}
			\partial_x H^s
			=
			i\sigma_3
			\bigl(Q_0^s(x)-k(z)I\bigr)H^s
			+
			i p^s(z)H^s\sigma_3.
		\end{equation}
		
		On the other hand, differentiating
		$
		\Psi^s
		=
		H^sM^s
		e^{-i(x-x_0^s)p^s(z)\sigma_3}
		$
		and using \eqref{eq:Hx_background}, we obtain
		\begin{equation}\label{eq:PQS}
			\partial_x\Psi^s
			={}
			i\sigma_3
			\bigl(Q_0^s(x)-k(z)I\bigr)\Psi^s+
			H^s
			\left(
			\partial_xM^s
			+i p^s(z)[\sigma_3,M^s]
			\right)
			e^{-i(x-x_0^s)p^s(z)\sigma_3}.
		\end{equation}
		Since $\Psi^s$ also satisfies
		$
		\partial_x\Psi^s
		=
		i\sigma_3
		\bigl(Q(x)-k(z)I\bigr)\Psi^s$,
		with
		$Q(x)=Q_0^s(x)+\Delta Q^s(x),
		$
		comparing this and \eqref{eq:PQS} yields
		\[
		H^s
		\left(
		\partial_xM^s
		+i p^s(z)[\sigma_3,M^s]
		\right)
		=
		i\sigma_3\Delta Q^s(x)H^sM^s,
		\]
		which is
		\begin{equation}\label{eq:paer}
			\partial_xM^s
			=
			-i p^s(z)[\sigma_3,M^s]
			+
			(H^s)^{-1}
			i\sigma_3\Delta Q^s(x)
			H^sM^s.
		\end{equation}
		Since
		$
		(H^s)^{-1}
		=
		Y^s(z;x,0)^{-1}
		e^{-i(x-x_0^s)p_\infty^s\sigma_3},
		$
		we see that \eqref{eq:paer} is precisely \eqref{eq:direct_z_M_ODE}.
		
		To derive the integral equation, conjugate \eqref{eq:direct_z_M_ODE} by $e^{ixp^s(z)\sigma_3}$. We obtain
		\begin{align*}
			\frac{d}{dx}
			\left(
			e^{ixp^s(z)\sigma_3}
			M^s(x;z)
			e^{-ixp^s(z)\sigma_3}
			\right)
			={}&
			e^{ixp^s(z)\sigma_3}
			Y^s(z;x,0)^{-1}
			e^{-i(x-x_0^s)p_\infty^s\sigma_3}
			\\
			&\times
			i\sigma_3\Delta Q^s(x)
			e^{i(x-x_0^s)p_\infty^s\sigma_3}
			Y^s(z;x,0)M^s(x;z)
			e^{-ixp^s(z)\sigma_3}.
		\end{align*}
		Integrating from $\infty^s$ to $x$ and using $M^s(x;z)\to I$ as $x\to\infty^s$ gives \eqref{eq:direct_z_Volterra}.
	\end{proof}
	
	\begin{rmk}
		The formula \eqref{eq:direct_z_Volterra} separates the known background from the perturbation. The background at $x\to\infty^s$ is contained in $Y^s(z;x,0)$ and $p^s(z)$, whereas the perturbation appears only through $\Delta Q^s(x)$. In particular, if $\Delta Q^s\equiv0$, then
		\eqref{eq:direct_z_Volterra} gives
		$
		M^s(x;z)\equiv I.
		$
		Thus in the unperturbed case,
		$
		\Psi^s(x;z)=\Psi_0^s(x;z).
		$
	\end{rmk}

	\begin{pro}\label{pro:M-large-z}
		Suppose that
		$
		\Delta Q^\ell\in L^1(\mathbb{R}^-)$,
		$\Delta Q^r\in L^1(\mathbb{R}^+).
		$
		Then, for every fixed $x\in\mathbb{R}$ and
		$s\in\{\ell,r\}$,
		\[
		M^s(x;z)=I+o(1),
		\qquad z\to\infty,
		\]
		where the limit is understood columnwise in the corresponding domains of analyticity.
		
		Moreover, $M^s$ satisfies the symmetry
		\begin{equation}\label{eq:symMs}
			M^s(x;z)
			=
			\sigma_1M^s(x;z^{-1})\sigma_1,
		\end{equation}
		for all $z$ for which both sides are defined. 
		\par 
		Consequently,
		\[
		M^s(x;z)=I+o(1),
		\qquad z\to0,
		\]
		again columnwise in the corresponding domains.
	\end{pro}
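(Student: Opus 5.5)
The plan is to work directly with the Volterra equation \eqref{eq:direct_z_Volterra} of Lemma~\ref{lem:direct_z_M_equation}, column by column. Write the kernel as $K^s(x,y;z)=(H^s(y;z))^{-1} i\sigma_3\Delta Q^s(y) H^s(y;z)$, with $H^s$ as in \eqref{def:H}. Conjugation by $e^{-i(x-y)p^s(z)\sigma_3}(\cdot)e^{i(x-y)p^s(z)\sigma_3}$ multiplies the $(1,2)$ entry by $e^{-2i(x-y)p^s}$ and the $(2,1)$ entry by $e^{2i(x-y)p^s}$. For the first column with $s=\ell$ (so $y<x$), Lemma~\ref{lemfoin} gives $\operatorname{Im}p^\ell>0$ in $\mathbb C^+\setminus\overline{\Sigma_1^\ell}$, hence $|e^{2i(x-y)p^\ell}|\le1$. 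The analogous sign statement holds for the other columns and for $s=r$.

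The first step is to bound $Y^s$ and $(Y^s)^{-1}$ uniformly for $|z|\ge R_0$ and $y\in\mathbb R$. $Y^s$ is built from $F$ in \eqref{solF}, which contains theta quotients that are periodic or quasi-periodic in $\Omega$ and nonsingular, since the solution is globally smooth. Together with $Y^s=I+\mathcal O(z^{-1})$ uniformly in $y$, and $\det Y^s = 1-z^{-2}$ (up to normalization) being bounded away from $0$, this yields a uniform bound. A Neumann series then gives existence and the bound $|M^s|\le \exp(C\|\Delta Q^s\|_{L^1})$.

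To prove $M^s\to I$, I will split $K^s = i\sigma_3\Delta Q^s(y) + E(y;z)$, where $E=\mathcal O(z^{-1})\,|\Delta Q^s(y)|$ uniformly in $y$. The contribution of $E$ vanishes as $z\to\infty$ by dominated convergence. The remaining term is off-diagonal. The first column of $M^s-I$ satisfies $M_{11}-1=\int K_{12}M_{21}$ and $M_{21}=\int e^{2i(x-y)p}(\ldots)M_{11}$. Because $p^s(z)=z/2+p_\infty+o(1)$, the factor $e^{2i(x-y)p^s(z)}$ oscillates or decays for $y\ne x$. A Riemann--Lebesgue argument therefore gives $\int_{\infty^s}^x e^{2i(x-y)p^s(z)}\Delta Q^s(y)g(y)\,dy\to0$ for bounded $g$, uniformly on the closed half-plane. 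I will approximate $\Delta Q^s$ in $L^1$ by smooth compactly supported functions and integrate by parts, which produces a factor $1/p^s(z)$. Iterating through the Neumann series (each term has at most geometric size $\|\Delta Q\|_{L^1}^n/n!$) gives $M^s=I+o(1)$.

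For the symmetry \eqref{eq:symMs}, I will use $Y^s(z)=z^{-1}Y^s(z^{-1})\sigma_1$ from RHP~\ref{RH5} together with the inversion behavior of $p^s$. From the expansions, $p^s(z^{-1})=-p^s(z)$ holds, and this is consistent with $J_z(z)+J_z(z^{-1})=0$ and $dp$ being odd under $z\mapsto z^{-1}$. Since $\sigma_1\sigma_3\sigma_1=-\sigma_3$, I will substitute into \eqref{eq:direct_z_Volterra} at $z^{-1}$ and conjugate by $\sigma_1$; the scalar factors $z^{\pm1}$ cancel. This shows that $\sigma_1M^s(x;z^{-1})\sigma_1$ solves the same Volterra equation as $M^s(x;z)$, so by uniqueness of Volterra solutions the two agree. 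Note that $z\mapsto z^{-1}$ maps $\mathbb C^+$ to $\mathbb C^-$ and swaps the columns via $\sigma_1$, which matches the domains columnwise. The limit $z\to0$ then follows immediately from the limit at infinity.

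The main obstacle is the uniform-in-$y$ control of $Y^s(z;y,0)^{\pm1}$ together with the uniformity of the Riemann--Lebesgue step as $z\to\infty$ within the half-plane. Pointwise decay along real $z$ is not enough; I will handle this by the smooth-approximation and integration-by-parts bound, which is uniform for $|p^s(z)|$ large.
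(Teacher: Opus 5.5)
Your overall route is the paper's: a Neumann series for the Volterra equation with $1/n!$ bounds uniform in large $|z|$ (from Lemma~\ref{lemfoin} and uniform bounds on $Y^s$, $(Y^s)^{-1}$), a splitting of the conjugated kernel into an off-diagonal leading part plus an $\mathcal O(|z|^{-1}\|\Delta Q^s(y)\|)$ remainder, a Riemann--Lebesgue argument for the leading part, a tail cut-off, and the symmetry \eqref{eq:symMs} by uniqueness once $\sigma_1M^s(x;z^{-1})\sigma_1$ is shown to solve the same Volterra problem. One small slip: the leading kernel is $e^{-i(y-x_0^s)p_\infty^s\sigma_3}i\sigma_3\Delta Q^s(y)e^{i(y-x_0^s)p_\infty^s\sigma_3}$, not $i\sigma_3\Delta Q^s(y)$. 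This only multiplies the off-diagonal entries by unimodular factors and is harmless.

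The step that fails as stated is the claim that $\int_{\infty^s}^x e^{2i(x-y)p^s(z)}\Delta Q^s(y)g(y)\,dy\to0$ ``for bounded $g$''. You apply it with $g=M_{11}(\cdot;z)$, and in the Neumann series with $g$ a previous iterate; in both cases $g$ depends on $z$. Riemann--Lebesgue is not uniform over bounded $g$. For real $z$ you can choose a bounded $g$, depending on $z$, that cancels both the phase $e^{2i(x-y)p^s(z)}$ and the argument of the kernel entry. The integral then equals the $L^1$ norm of that entry. For the same reason, integrating by parts after approximating only $\Delta Q^s$ produces a term in $g'$ that you have not controlled.

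The paper avoids this by writing the leading part of the $n$th iterate as an $n$-fold integral over the simplex $\{y_n<\cdots<y_1<x\}$. The density there is $z$-independent and in $L^1$, the phase is nonconstant and linear, and the multidimensional Riemann--Lebesgue lemma applies.

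You can repair your version cheaply in either of two ways. First, only the first iterate $M_1^s$ needs Riemann--Lebesgue, with $g\equiv1$. After that, $M_n^s(y;z)\to0$ pointwise (columnwise) with the $z$-uniform bound $(C\|\Delta Q^s\|_{L^1})^n/n!$, so each later iterate tends to zero by dominated convergence. Second, since the $(1,1)$ entry of $[\sigma_3,M^s]$ vanishes, \eqref{eq:direct_z_M_ODE} gives $\partial_yM^s_{11}=(K^sM^s)_{11}=\mathcal O(\|\Delta Q^s(y)\|)$ uniformly in $z$. Hence $M^s_{11}(\cdot;z)$ has uniformly bounded variation, and your integration-by-parts estimate does hold uniformly.
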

	\par 
	The proof of Proposition \ref{pro:M-large-z} is deferred to Appendix~\ref{app:proMl}. The following proposition summarizes the basic properties of the Jost solutions that will be used in the subsequent scattering analysis.
	\begin{pro}\label{pro3.3}
		Suppose $u(x)-u_0^\ell(x)\in L^1(\mathbb{R}^-)$ and
		$u(x)-u_0^r(x)\in L^1(\mathbb{R}^+)$. Then
		$\Psi^s(x;z)$, $s\in\{\ell,r\}$, have the following properties: 
		
		\begin{enumerate}
			\item \label{prop:pro3.3-property-1}
			For any $x_*\in\mathbb{R}$, the Volterra equations determine uniquely the left and right Jost solutions satisfying
			\[
			\Psi^\ell(\cdot;z)\in
			L^\infty((-\infty,x_*]),
			\qquad
			\Psi^r(\cdot;z)\in
			L^\infty([x_*,\infty)),
			\]
			for $z\in\mathbb{R}\setminus\{0\}$ and for either boundary value on the interior of the corresponding spectral bands.
			\par 
			For each fixed $x\in\mathbb{R}$, the analytic continuation of the Jost solutions is understood columnwise. More precisely,
			\begin{equation*}
				\begin{aligned}
					&\Psi_1^\ell(x;z) \text{ is analytic in }
					\mathbb{C}^+\setminus\Sigma_1^\ell,
					\qquad
					\Psi_2^r(x;z) \text{ is analytic in }
					\mathbb{C}^+\setminus\Sigma_1^r, \\
					&\Psi_2^\ell(x;z) \text{ is analytic in }
					\mathbb{C}^-\setminus\Sigma_2^\ell,
					\qquad
					\Psi_1^r(x;z) \text{ is analytic in }
					\mathbb{C}^-\setminus\Sigma_2^r.
				\end{aligned}
			\end{equation*}
			These analytic columns possess continuous boundary values on the corresponding contours away from $z=0$ and the endpoints of the spectral bands.
			\item \label{prop:pro3.3-property-2}
			For $s\in\{\ell,r\}$, the boundary values of
			$\Psi^s(x;z)$ on the spectral bands satisfy
			\begin{equation}\label{eq:Psijump}
				\Psi_+^s(x;z)
				=
				\Psi_-^s(x;z)
				\begin{cases}
					-i\sigma_1,
					& z\in\Sigma_1^s,
					\\[1ex]
					i\sigma_1,
					& z\in\Sigma_2^s.
				\end{cases}
			\end{equation}
			\item \label{prop:pro3.3-property-3}
			Let $n\geq2$ and assume that
			\[
			u-u_0^\ell\in W^{n,1}(\mathbb{R}^-),
			\qquad
			u-u_0^r\in W^{n,1}(\mathbb{R}^+).
			\]
			Then, for each fixed $x\in\mathbb{R}$, the Jost solutions admit asymptotic expansions at both $z=\infty$ and $z=0$.
			
			As $z\to\infty$,
			\begin{equation*}
				\Psi^s(x;z)
				e^{i(x-x_0^s)k(z)\sigma_3}
				=
				I+
				\sum_{j=1}^{n-1}
				\frac{C_j^s(x)}{z^j}
				+
				\mathcal{O}(z^{-n}),
				\qquad s\in\{\ell,r\},
			\end{equation*}
			where the expansion is understood columnwise in the corresponding domains of analyticity. In particular,
			\begin{equation*}
				C_1^s(x)
				=
				\begin{pmatrix}
					A^s(x) & \overline{u(x)}
					\\
					u(x) & -A^s(x)
				\end{pmatrix},
				\qquad
				\frac{d}{dx}A^s(x)
				=
				i|u(x)|^2.
			\end{equation*}
			
			The behavior at the origin follows from the symmetry
			\begin{equation}\label{eq:invsym}
				\Psi^s(x;z)
				=
				z^{-1}\Psi^s(x;z^{-1})\sigma_1.
			\end{equation}
			Since $k(z^{-1})=k(z)$, one obtains, as $z\to0$,
			\begin{equation*}
				\Psi^s(x;z)
				e^{-i(x-x_0^s)k(z)\sigma_3}
				=
				\frac{\sigma_1}{z}
				+
				\sum_{j=1}^{n-1}
				z^{j-1}C_j^s(x)\sigma_1
				+
				\mathcal{O}(z^{n-1}).
			\end{equation*}
			In particular,
			\begin{equation*}
				\Psi^s(x;z)
				e^{-i(x-x_0^s)k(z)\sigma_3}
				=
				\frac{\sigma_1}{z}
				+
				\begin{pmatrix}
					\overline{u(x)} & A^s(x)
					\\
					-A^s(x) & u(x)
				\end{pmatrix}
				+
				\mathcal{O}(z),
				\qquad z\to0.
			\end{equation*}
			
			\item \label{prop:pro3.3-property-4}
			Suppose, in addition, that
			\[
			u-u_0^\ell\in L^{1,1}(\mathbb{R}^-),
			\qquad
			u-u_0^r\in L^{1,1}(\mathbb{R}^+).
			\]
			Fix $x_*\in\mathbb{R}$. Then there exists a constant $\mathcal{C}>0$, independent of $z$, such that,
			\begin{equation*}
				\begin{aligned}
					\|\Psi^\ell(x;z)\|
					&\leq
					\sup_{y<x_*}\|\Psi_0^\ell(y;z)\|
					\exp\left\{
					\mathcal{C}(1+|x_*|)
					\|u-u_0^\ell\|_
					{L^{1,1}((-\infty,x_*])}
					\right\}, \\
					\|\Psi^r(x;z)\|
					&\leq
					\sup_{y>x_*}\|\Psi_0^r(y;z)\|
					\exp\left\{
					\mathcal{C}(1+|x_*|)
					\|u-u_0^r\|_
					{L^{1,1}([x_*,\infty))}
					\right\}.
				\end{aligned}
			\end{equation*}
			Consequently, the perturbation does not increase the order of the endpoint singularities inherited from the periodic backgrounds. More precisely, for
			$\zeta\in
			\{
			\eta_1^s,\eta_2^s,
			\overline{\eta_1^s},\overline{\eta_2^s}
			\}$,
			$s\in\{\ell,r\}$, 
			one has
			\begin{equation}\label{eq:Psisin}
				\Psi^s(x;z)
				=
				\mathcal{O}\left(
				|z-\zeta|^{-1/4}
				\right),
				\qquad z\to\zeta.
			\end{equation}
			In particular, the Jost solutions have at most fourth-root singularities at the endpoints of the corresponding spectral bands. Moreover, if we assume that
			\begin{equation*}
				u-u_0^\ell\in L^{1,2}(\mathbb{R}^-), \qquad  u-u_0^r\in L^{1,2}(\mathbb{R}^+),
			\end{equation*}
			then, after the singular scalar factors inherited from the periodic backgrounds are removed, the boundary values of the Jost solutions admit finite limits at the endpoints:
			\begin{equation}\label{eq:hhpsi}
				\begin{aligned}
					h_\pm^s(k(z))^{-1}\Psi_\pm^s(x;z)&=\widetilde{\Psi}^s(x;\zeta)+\mathcal O\left(|z-\zeta|^{1/2}\right),\qquad \zeta\in\{\eta_1^s,\overline{\eta_1^s}\},\\
					h_\pm^s(k(z))\Psi_\pm^s(x;z)&=\widetilde{\Psi}^s(x;\zeta)+\mathcal O\left(|z-\zeta|^{1/2}\right),\qquad \zeta\in\{\eta_2^s,\overline{\eta_2^s}\},
				\end{aligned}
			\end{equation}
			where $ \widetilde{\Psi}^s(x;z) $ is defined by \eqref{eq:regularized-Psi-definitions}. For the columns admitting analytic continuation, the same estimates hold in the corresponding slit neighborhoods of the endpoints. The function $h^s(k(z))$ denotes the fourth-root function $h(k)$ introduced in Subsection \ref{subrpf}, with $\eta_j$ replaced by $\eta_j^s$; equivalently, its explicit $z$-plane expression is given in \eqref{eq:h-k}.
		\end{enumerate}
	\end{pro}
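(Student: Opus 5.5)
The plan is to work throughout with the Volterra equation \eqref{eq:direct_z_Volterra} of Lemma~\ref{lem:direct_z_M_equation}, written columnwise, and to transfer each property of $M^s$ to $\Psi^s$ through \eqref{eq:direct_z_M_def}.

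\textbf{Property~\ref{prop:pro3.3-property-1} (existence, uniqueness, analyticity).} Consider $s=\ell$ and the first column $M_1^\ell$. The kernel of \eqref{eq:direct_z_Volterra} acts on the first column through the conjugation $e^{-i(x-y)p\sigma_3}(\cdot)e^{i(x-y)p\sigma_3}$. Its first-column entries therefore carry either no exponential or the factor $e^{2i(x-y)p^\ell(z)}$ with $y<x$.

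By Lemma~\ref{lemfoin}, $\operatorname{Im}p^\ell>0$ in $\mathbb C^+\setminus\overline{\Sigma_1^\ell}$, so this factor is bounded by $1$. On $\mathbb R\setminus\{0\}$ and on either side of the band, $p^\ell$ is real and the factor is unimodular.

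The matrices $Y^\ell(z;y,0)^{\pm1}$ are quasi-periodic in $y$ through $\Omega$, hence bounded uniformly in $y$ for $z$ in compact sets avoiding the endpoints and $0$. This uses $\det Y=1-z^{-2}$, so $Y^{-1}$ is bounded away from $z=\pm1$. At $\pm1$ itself I would instead track $\Psi$ directly, since the Lax equation is regular there.

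The Neumann series then converges and gives $\|M_1^\ell(x)-I_1\|\le e^{C\int_{-\infty}^x|\Delta Q^\ell|}-1$. This yields existence, uniqueness and boundedness on $(-\infty,x_*]$. Each iterate is analytic in $z$ and the convergence is locally uniform, so $M_1^\ell$ is analytic, with continuous boundary values obtained by dominated convergence. The other columns, and $s=r$, are handled the same way.

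\textbf{Property~\ref{prop:pro3.3-property-2} (band jumps).} $Y^s$ has jump $\mp i\sigma_1$ on $\Sigma_{1,2}^s$, and $p^s_+=-p^s_-$ there, so $e^{-ixp_+\sigma_3}=\sigma_1e^{-ixp_-\sigma_3}\sigma_1$. Hence $\Psi^s_-(\mp i\sigma_1)$ solves the Lax equation with the same normalization as $\Psi^s_+$, and uniqueness from Property~\ref{prop:pro3.3-property-1} gives \eqref{eq:Psijump}.

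The inversion symmetry \eqref{eq:invsym} follows similarly. It combines $Y(z)=z^{-1}Y(z^{-1})\sigma_1$, $p(z^{-1})=-p(z)$, and uniqueness, consistently with \eqref{eq:symMs}.

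\textbf{Property~\ref{prop:pro3.3-property-3} (large-$z$ expansion).} Rather than expanding $Y$, I would work with $\Psi e^{i(x-x_0)k\sigma_3}$ and substitute the WKB ansatz $I+\sum C_j z^{-j}$ into the Lax equation. This gives the recursion $[\sigma_3,C_{j+1}]$ in terms of $\sigma_3QC_j$ and $\partial_xC_j$, which fixes the off-diagonal part of $C_1$ as $Q$. The diagonal equation gives $A'=i|u|^2$, with the integration constant fixed by the background expansion of $Y^s$ and $p^s-k$ as $x\to\infty^s$.

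The main obstacle is justifying the remainder $\mathcal O(z^{-n})$. I would integrate the Volterra equation by parts $n-1$ times, using $u-u_0^s\in W^{n,1}$ and the uniform large-$z$ expansion of $Y^s$ (the $\theta$-quotients in \eqref{solF} are smooth in $1/k$). Each integration by parts gains a factor of $(2p^s)^{-1}\sim z^{-1}$, with Proposition~\ref{pro:M-large-z} as the starting point. The $z\to0$ expansion then follows from \eqref{eq:invsym}.

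\textbf{Property~\ref{prop:pro3.3-property-4} (endpoint behavior).} Near the endpoints, $p$ is bounded and real-type, but $Y^s=\mathcal O(|z-\zeta|^{-1/4})$ and $\det Y$ stays bounded. The quantity controlling the kernel is therefore $\|\Psi_0(x)\|\,\|\Psi_0(y)^{-1}\|$. Using the Lax-equation form $\Psi(x)=\Psi_0(x)-\int\Psi_0(x)\Psi_0(y)^{-1}i\sigma_3\Delta Q(y)\Psi(y)\,dy$, this product grows at most linearly in $|x-y|$ near coalescing branch points, as in the standard finite-gap Marchenko estimate.

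A Gronwall argument with weight $(1+|y|)$ gives the stated $L^{1,1}$ bound and hence \eqref{eq:Psisin}. Under the stronger $L^{1,2}$ condition, I would multiply by $h^{\mp1}$, which removes the $(z-\zeta)^{-1/4}$ factor of $Y$. The regularized kernel is then differentiable in $\sqrt{z-\zeta}$, with an extra factor $|x-y|$ coming from differentiating $e^{ip(x-y)}$, where $p-p(\zeta)\sim\sqrt{z-\zeta}$. This gives \eqref{eq:hhpsi}.
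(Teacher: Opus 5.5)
Your route is essentially the paper's: a columnwise Neumann series for $M^s$, uniqueness for the band jumps and the inversion symmetry, integration by parts at infinity, and a linearly growing propagator bound followed by $h^{\mp1}$-regularization at the endpoints. However, property 1 has a genuine gap: you never deal with the auxiliary cut $\Sigma_0^s=(\eta_2^s,\overline{\eta_2^s})_{\mathbb{T}}$.

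Its upper half, the arc $\{e^{i\theta}:\theta_2^s<\theta<\pi\}$, lies in $\mathbb{C}^+\setminus\Sigma_1^s$. Across it both ingredients of the Volterra kernel jump. At $t=0$ one has $Y^s_+=Y^s_-e^{i\Omega_1^s(x-x_0^s)\sigma_3}$, which is the $e^{i\Omega\sigma_3}$ jump in RHP~\ref{RH5}, and also $p^s_+-p^s_-=\Omega_1^s$. Hence the Neumann iterates of \eqref{eq:direct_z_Volterra}, and $M^s$ itself, are \emph{not} analytic there. Your argument therefore only gives analyticity of $\Psi_1^\ell$ and $\Psi_2^r$ in $\mathbb{C}^+\setminus(\Sigma_1^s\cup\Sigma_0^s)$, which is weaker than what is claimed.

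The missing observation is that the two jumps compensate at the level of $\Psi$. The conjugated matrix $e^{i\Omega_1^s(x-x_0^s)\sigma_3}M^s_+e^{-i\Omega_1^s(x-x_0^s)\sigma_3}$ satisfies the Volterra equation for $M^s_-$ columnwise, with the same normalization. By uniqueness it equals $M^s_-$, and this is exactly $\Psi^s_+=\Psi^s_-$ on $\Sigma_0^s$. Continuity of the boundary values and Morera's theorem then remove the arc. This is the same uniqueness device you already use for property 2; it just has to be applied here as well.

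In property 4 there are smaller inaccuracies to repair.

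First, the variation-of-parameters formula is $\Psi=\Psi_0+\int_{-\infty}^{x}\Psi_0(x)\Psi_0(y)^{-1}i\sigma_3\Delta Q(y)\Psi(y)\,dy$, with a plus sign, not a minus.

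Second, the kernel is not controlled by $\|\Psi_0(x)\|\,\|\Psi_0(y)^{-1}\|$, which is of order $|z-\zeta|^{-1/2}$ near an endpoint. It is controlled by the norm of the product $\Psi_0(x;z)\Psi_0(y;z)^{-1}$, and its uniform bound $\mathcal{C}(1+|x-y|)$ rests on a cancellation that you only cite. The paper derives it from the splitting \eqref{eq:background-propagator-decomposition} and the theta-function endpoint identities that yield \eqref{eq:K1-estimate}. A self-contained alternative: gauge $u_0^s$ to a periodic potential, then bound powers of the $SU(1,1)$ monodromy matrix, whose trace lies in $[-2,2]$ on the band, via Chebyshev polynomials.

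Third, \eqref{eq:hhpsi} asserts one common limit $\widetilde{\Psi}^s(x;\zeta)$ for both boundary values. This is used later: it gives $b_{2-}/b_{2+}\to i$ in the proof of Proposition~\ref{prop:gamma-properties}. Your sketch produces a limit on each side but not their equality. The paper obtains the equality from three facts:
\begin{enumerate}
\item the column identity \eqref{eq:regularized-background-sigma1};
\item the relation $h_+^s\Psi_{0+}^s=h_-^s\Psi_{0-}^s\sigma_1$ on the band;
\item the fact that $\Psi_{0+}^s(x)\Psi_{0+}^s(y)^{-1}=\Psi_{0-}^s(x)\Psi_{0-}^s(y)^{-1}$, so both sides satisfy the same limiting Volterra equation.
\end{enumerate}
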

	The proof of the Proposition \ref{pro3.3} is deferred to the Appendix \ref{AppA}.
	\par 
	So far, we have established various properties of the Jost functions. We now proceed to derive the scattering matrix.
	\par 
	Define the scattering matrix $S(z)$ by
	\begin{equation}\label{eq:scat}
		\Psi^\ell(x;z)=\Psi^r(x;z)S(z), \quad z\in\mathbb{R},
		\qquad 
		\Psi_\pm^\ell(x;z)=\Psi^r_\pm(x;z)S_\pm(z), \quad z\in \Sigma^r \cap \Sigma^\ell,
	\end{equation}
	where we recall that $\Sigma^r:=\Sigma^r_1 \cup \Sigma^r_2$ and $ \Sigma^{\ell}:= \Sigma^\ell_1 \cup \Sigma^\ell_2 $.
	\par 
	Recall \eqref{eq:invsym} and the symmetry of $Y(z)$ in RHP \ref{RH5}, we have
	\begin{equation}\label{eq:sym}
		\Psi^s (z) = \sigma_1  \Psi^{s*} ( z )  \sigma_1 = z^{-1} \Psi^s(z^{-1}) \sigma_1.
	\end{equation}
	\par 
	Therefore $S(z)$ satisfies
	\begin{equation}\label{Syms}
		S(z) = \sigma_1  S^* ( z )  \sigma_1 = \sigma_1  S ( z^{-1} )  \sigma_1.
	\end{equation}
	By using the symmetry of $S(z)$, one can write
	\begin{equation*}
		S(z) = \begin{pmatrix} a(z) & b^*(z) \\ b(z) & a^*(z) \end{pmatrix}, \quad  z \in \mathbb{R},
		\qquad
		S_{\pm}(z) = \begin{pmatrix} a_{\pm}(z) & b_{2\pm}(z) \\ b_{1\pm}(z) & a^*_{\pm}(z) \end{pmatrix}, \qquad z \in \Sigma^r_1 \cap \Sigma^\ell_1.
	\end{equation*}
	More specifically, we can obtain
	\begin{equation*}
		\begin{aligned}
			\Psi_1^{\ell}(z) &= a(z)\Psi_1^r(z) + b_1(z)\Psi_2^r(z), \qquad z \in \Sigma_1^r \setminus \Sigma_1^{\ell}, \\
			\Psi_2^{\ell}(z) &= b_1^*(z)\Psi_1^r(z) + a^*(z)\Psi_2^r(z), \qquad z \in \Sigma_2^r \setminus \Sigma_2^{\ell}, \\
			\Psi_2^r(z) &= -b_2(z)\Psi_1^{\ell}(z) + a(z)\Psi_2^{\ell}(z), \qquad z \in \Sigma_1^{\ell} \setminus \Sigma_1^r, \\
			\Psi_1^r(z) &= a^*(z)\Psi_1^{\ell}(z) - b_2^*(z)\Psi_2^{\ell}(z), \qquad z \in \Sigma_2^{\ell} \setminus \Sigma_2^r.
		\end{aligned}
	\end{equation*}
	Thus
	\begin{equation}\label{eq:defab}
		\begin{aligned}
			a(z) &= \frac{\det [\Psi_1^{\ell}(x; z), \Psi_2^r(x; z)]}{ 1 - z^{-2} }, \quad z \in \mathbb{C}^+\setminus ( \Sigma_1^{\ell} \cup \Sigma_1^r ),
			& b(z) &= \frac{\det [\Psi_{1}^r(x; z), \Psi_{1}^{\ell}(x; z)]}{ 1 - z^{-2} }, \quad z \in \mathbb{R}\setminus \{0\}, \\[1ex]
			b_{1\pm}(z) &= \frac{\det [\Psi_{1\pm}^r(x; z), \Psi_{1\pm}^{\ell}(x; z)]}{ 1 - z^{-2} }, \quad z\in \Sigma_1^r , 
			&  b_{2\pm}(z) &= -\frac{\det [\Psi_{2\pm}^r(x; z), \Psi_{2\pm}^{\ell}(x; z)]}{ 1 - z^{-2} }, \quad z\in \Sigma_1^{\ell} ,
		\end{aligned}
	\end{equation}
	where the function $a(z)$ admits a continuous extension to the boundary.
	\begin{rmk}\label{b1b2}
		Here, for clarity, we explicitly write the subscripts ``$\pm$'' to remind the reader that these definitions are given in terms of boundary values. The function $a(z)$ also possesses its own boundary values $a_{\pm}(z)$ on
		$
		\Sigma_1^{\ell}\cup\Sigma_1^r.
		$
		and the boundary value $a_+(z)$ on $\mathbb{R}$.
		In what follows, for simplicity, we sometimes omit the subscripts ``$\pm$'' indicating the boundary values.
		\par 
		Although, according to the definitions, $b_1(z)$ could also be defined on 
		$\Sigma_2^{\ell}$ and $b_2(z)$ could likewise be defined on 
		$\Sigma_2^{r}$, so that, formally, one would have $b_2^*(z)=b_1(z)$, and the notation $b_2$ might therefore appear redundant, we shall still distinguish between these two quantities. This distinction emphasizes that $b_1$ contains only the scattering information generated by the right background wave, whereas $b_2$ contains only that generated by the left background wave. It also corresponds naturally to the subsequent factorization of $a(z)$ into $a_1(z)$ and $a_2(z)$.
	\end{rmk}
	\par

	\begin{pro}\label{prop:scattering-endpoint-behavior}
		Assume that
		$
		u(x)-u_0^\ell(x)\in L^{1,2}(\mathbb R^-)$ and 		
		$u(x)-u_0^r(x)\in L^{1,2}(\mathbb R^+).
		$
		Then the scattering coefficients have the following properties.
		\begin{enumerate}
			\item At the upper-half-plane endpoints,
			\begin{equation}\label{eq:a-endpoint-behavior}
				a(z)=\mathcal O\left(|z-\eta_j^s|^{-1/4}\right),
				\qquad
				z\to\eta_j^s,
				\qquad
				s\in\{\ell,r\},
				\quad j=1,2.
			\end{equation}
			Moreover, regardless of whether $\eta_j^\ell\in\Sigma_1^r$ and whether $\eta_j^r\in\Sigma_1^\ell$, one has
			\begin{equation}\label{eq:b12-endpoint-behavior}
				b_{1\pm}(z)
				=\mathcal O\left(|z-\eta_j^r|^{-1/4}\right),
				z\to\eta_j^r, \qquad
				b_{2\pm}(z)
				=\mathcal O\left(|z-\eta_j^\ell|^{-1/4}\right),
				z\to\eta_j^\ell,
				\qquad j=1,2.
			\end{equation}
			
			\item The scattering coefficients satisfy the symmetries
			\begin{equation}\label{eq:absym}
				a(z)=a^*(z^{-1}),
				\qquad
				b(z)=b^*(z^{-1}).
			\end{equation}
			In particular, their boundary values satisfy
			\begin{equation*}
				a_\pm(z)=\overline{a_\mp(z)},
				\quad z\in\Sigma_1^\ell\cup\Sigma_1^r, \qquad 
				b_{1\pm}(z)=\overline{b_{1\mp}(z)},
				\quad z\in\Sigma_1^r, \qquad 
				b_{2\pm}(z)=\overline{b_{2\mp}(z)},
				\quad z\in\Sigma_1^\ell.
			\end{equation*}
			
			\item If, in addition,
			$u(x)-u_0^\ell(x)\in W^{1,1}(\mathbb R^-)$
			and
			$u(x)-u_0^r(x)\in W^{1,1}(\mathbb R^+)$,
			then
			\begin{equation}\label{eq:a-infinity-zero-asymptotics}
				a(z)e^{-\frac{i}{2}(x_0^\ell-x_0^r)(z-z^{-1})}
				=1+\mathcal O(z^{-1}),
				\quad z\to\infty, \qquad 
				a(z)e^{-\frac{i}{2}(x_0^\ell-x_0^r)(z-z^{-1})}
				=1+\mathcal O(z),
				\quad z\to0.
			\end{equation}
			If, furthermore,
			$u(x) \in W^{4,1}_{\rm loc}(\mathbb{R})$,
			$u(x)-u_0^\ell(x)\in W^{4,1}(\mathbb R^-)$
			and
			$u(x)-u_0^r(x)\in W^{4,1}(\mathbb R^+)$,
			then for $z\in\mathbb R$,
			\begin{equation}\label{eq:b-infinity-zero-asymptotics}
				b(z)e^{-\frac{i}{2}(x_0^\ell-x_0^r)(z-z^{-1})}
				=\mathcal O(z^{-4}),
				\quad z\to\pm\infty, \qquad 
				b(z)e^{-\frac{i}{2}(x_0^\ell-x_0^r)(z-z^{-1})}
				=\mathcal O(z^4),
				\quad z\to0.
			\end{equation}
		\end{enumerate}

	\end{pro}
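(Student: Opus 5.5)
We prove the three items separately, working throughout from the Wronskian representations \eqref{eq:defab}. These are independent of $x$ because both columns solve \eqref{eq:direct_z_lax}, which is traceless. So at each step we are free to evaluate them at whatever $x$ is convenient.

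\emph{Item 1 (endpoint behavior).} For $a(z)$ near $\eta_j^s$ we use Proposition~\ref{pro3.3}, item~\ref{prop:pro3.3-property-4}. If $\zeta=\eta_j^\ell$, the column $\Psi_1^\ell$ is $\mathcal O(|z-\zeta|^{-1/4})$, while $\Psi_2^r$ is analytic and bounded near $\zeta$, since $\zeta$ is not an endpoint of $\Sigma^r$ by Assumption~\ref{ass:no-discrete-spectrum}. The same holds with the roles of $\ell$ and $r$ exchanged. The denominator $1-z^{-2}$ is nonzero at $\zeta\in\mathbb T\setminus\{\pm1\}$, so the determinant inherits the $|z-\zeta|^{-1/4}$ bound.

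For $b_{1\pm}$ at $\eta_j^r$ we argue the same way. Only $\Psi^r_{1\pm}$ is singular there. The column $\Psi^\ell_{1\pm}$ is either analytic near $\eta_j^r$ (when $\eta_j^r\notin\Sigma_1^\ell$) or has bounded continuous boundary values from both sides (when $\eta_j^r\in\Sigma_1^\ell$, being an interior point of that band). This is why the statement holds ``regardless of'' the configuration. The case of $b_{2\pm}$ at $\eta_j^\ell$ is symmetric.

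The step needing care is that the product of two fourth-root singularities never occurs. This is exactly what the no-common-endpoint assumption guarantees. It is also where the $L^{1,2}$ hypothesis enters, through \eqref{eq:hhpsi}, to make the boundary values continuous up to the endpoints.

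\emph{Item 2 (symmetries).} We substitute \eqref{eq:sym} into \eqref{eq:defab}. Using $\Psi^s(z)=z^{-1}\Psi^s(z^{-1})\sigma_1$, we have $\Psi_1^\ell(z)=z^{-1}\Psi_2^\ell(z^{-1})$ and $\Psi_2^r(z)=z^{-1}\Psi_1^r(z^{-1})$. Hence
\[
\det[\Psi_1^\ell(z),\Psi_2^r(z)]=z^{-2}\det[\Psi_2^\ell(z^{-1}),\Psi_1^r(z^{-1})].
\]
Dividing by $1-z^{-2}$ rewrites this as $-\det[\Psi^\ell_2,\Psi^r_1](z^{-1})/(1-z^{2})$. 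By \eqref{Syms} this equals $a^*(z^{-1})$; equivalently, we read $a=a^*(z^{-1})$ directly off $S(z)=\sigma_1S(z^{-1})\sigma_1$ combined with $S=\sigma_1S^*\sigma_1$. The relation for $b$ follows in the same way.

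For the boundary values, note that on $\mathbb T$ we have $z^{-1}=\bar z$. Moreover, the inversion $z\mapsto z^{-1}$ maps the $+$ side of the arc to the $-$ side, which gives $a_\pm=\overline{a_\mp}$. The same argument, applied to $b_{1\pm}$ and $b_{2\pm}$ using the jump \eqref{eq:Psijump}, gives their boundary relations.

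\emph{Item 3 (asymptotics).} We insert the large-$z$ expansions of Proposition~\ref{pro3.3}, item~\ref{prop:pro3.3-property-3}, taking $n=2$, into the Wronskian for $a$. Writing $\Psi^s=(I+C_1^s/z+\dots)e^{-i(x-x_0^s)k\sigma_3}$, we get
\[
\det[\Psi_1^\ell,\Psi_2^r]=e^{-i(x-x_0^\ell)k}e^{i(x-x_0^r)k}\bigl(1+\mathcal O(z^{-1})\bigr)=e^{i(x_0^\ell-x_0^r)k}\bigl(1+\mathcal O(z^{-1})\bigr).
\]
Here $k=\tfrac12(z+z^{-1})$, so we still have to reconcile $k$ with the stated exponent $\tfrac12(z-z^{-1})$. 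The discrepancy comes from the phases $p_\infty^s$ in \eqref{eq:direct_z_M_def} and from the normalization of $Y^s$, so we will track the exact phase in the expansion of $p^s(z)$ rather than rely on $k$ alone. Dividing by $1-z^{-2}=1+\mathcal O(z^{-2})$ then gives the behavior at infinity. The behavior at $z\to0$ follows from $a(z)=a^*(z^{-1})$.

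For $b$ on $\mathbb R$, the leading terms cancel, since $\Psi_1^r$ and $\Psi_1^\ell$ have the same leading column. To obtain $\mathcal O(z^{-4})$ we plan the standard integration by parts. We write $b$ via the Volterra representation as an integral of $\Delta Q$ against oscillatory factors $e^{\pm 2ixk}$ (or $e^{\pm i x(z-z^{-1})}$) times bounded Jost data, and integrate by parts four times. This uses $u-u_0^s\in W^{4,1}$ together with $W^{4,1}_{\rm loc}$ regularity to bridge the two half-lines. Each integration by parts gains one factor $z^{-1}$, and the boundary terms vanish because of the decay and because the $x$-independence of $b$ lets us choose a convenient splitting point.

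This last step is the main obstacle. The background factors $Y^s(z;x,0)$ are themselves $x$-dependent and quasi-periodic, so their derivatives also produce terms. We will use that $Y^s=I+\mathcal O(z^{-1})$ with $x$-derivatives of the same order, so these terms do not spoil the gain of $z^{-1}$ at each step. The behavior at $z\to0$ again follows from $b(z)=b^*(z^{-1})$.
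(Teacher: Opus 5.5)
\textbf{Verdict on the $b$ decay.} Items 1 and 2 and the large-$z$ behavior of $a$ follow the paper's route and are fine in substance. The genuine gap is the $\mathcal O(z^{-4})$ decay of $b$.

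You propose to integrate by parts four times in the Volterra representations. You then assert that the boundary terms vanish because the $x$-independence of $b$ lets you choose a convenient splitting point. That step fails. Since $\Delta Q^\ell$ does not decay at $+\infty$ and $\Delta Q^r$ does not decay at $-\infty$, you must evaluate at some finite $x_*$. Then $(1-z^{-2})b(z)$ equals the background Wronskian $\det[\Psi_0^r(x_*;z)e_1,\Psi_0^\ell(x_*;z)e_1]$ plus the two half-line integrals. That background Wronskian alone is generically only $\mathcal O(z^{-1})$. Appendix~\ref{app:pure-step-elliptic-example} shows this: every integral vanishes there and $b=\frac12(\Lambda-\Lambda^{-1})$. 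Moreover, each integration by parts creates interface terms at $y=x_*$, built from $u-u_0^\ell$, $u-u_0^r$ and their derivatives at $x_*$. These vanish for no choice of $x_*$. So the $z^{-1},z^{-2},z^{-3}$ contributions do not disappear individually; they must cancel against one another, and your proposal gives no mechanism for that cancellation. The $x$-derivatives of $Y^s$, which you single out as the main obstacle, are not the real issue.

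\textbf{The missing cancellation mechanism.} The paper obtains the cancellation algebraically. It takes the expansions $W^s=\Psi^s e^{i(x-x_0^s)k\sigma_3}=I+\sum_{j=1}^{3}C_j^s z^{-j}+\mathcal O(z^{-4})$ at the \emph{same} $x$. These come from Proposition~\ref{pro3.3}, item~\ref{prop:pro3.3-property-3} with $n=4$, which is where $W^{4,1}_{\rm loc}$ is needed. It inserts them into $\partial_x W^s=i\sigma_3 QW^s-ik[\sigma_3,W^s]$, which both $W^\ell$ and $W^r$ satisfy with the same $Q$. Writing $C_j^s e_1=(\xi_j^s,\chi_j^s)^T$, this gives $\chi_1^s=u$, $\partial_x\xi_j^s=i\bar u\chi_j^s$ and $\chi_{j+1}^s=-i\partial_x\chi_j^s+u\xi_j^s-\chi_{j-1}^s$. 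Hence $\xi_1^\ell-\xi_1^r$ is constant, $\chi_2^\ell-\chi_2^r=u(\xi_1^\ell-\xi_1^r)$, and $\chi_3^\ell-\chi_3^r=-iu_x(\xi_1^\ell-\xi_1^r)+u(\xi_2^\ell-\xi_2^r)$. These identities kill the $z^{-1},z^{-2},z^{-3}$ coefficients of $\det[W_1^r,W_1^\ell]$. Put simply, two first columns solving the same ODE have formal expansions differing only by an $x$-independent scalar series. Their Wronskian therefore vanishes to the order the regularity allows. Your interface terms, if actually computed, would have to reproduce exactly these identities.

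\textbf{Smaller points.}
\begin{itemize}
\item In item 3, the discrepancy between $k$ and $\frac12(z-z^{-1})$ has nothing to do with $p_\infty^s$ or $Y^s$; those phases already cancel in $\Psi^s e^{i(x-x_0^s)k\sigma_3}$. Simply $k(z)-\frac12(z-z^{-1})=z^{-1}$, so $e^{i(x_0^\ell-x_0^r)k}=e^{\frac i2(x_0^\ell-x_0^r)(z-z^{-1})}(1+\mathcal O(z^{-1}))$.
\item Citing that item with $n=2$ requires $W^{2,1}$, whereas a single integration by parts suffices under the stated $W^{1,1}$ hypothesis.
\item In item 1, when $\eta_j^\ell\in\Sigma_1^r$ the column $\Psi_2^r$ is not analytic near $\eta_j^\ell$. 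It only has bounded boundary values from each side, which is all you need.
\item The relation $b_{1\pm}=\overline{b_{1\mp}}$ comes from \eqref{eq:sym}, i.e.\ $\Psi_{1\pm}(z)=z^{-1}\sigma_1\overline{\Psi_{1\mp}(z)}$ on $\mathbb T$, not from \eqref{eq:Psijump}.
\end{itemize}
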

	
	\begin{proof}
		The endpoint behavior of $a(z)$ \eqref{eq:a-endpoint-behavior} and that of $b_1(z)$ and $b_2(z)$ \eqref{eq:b12-endpoint-behavior} follows directly from \eqref{eq:Psisin} and the definitions in \eqref{eq:defab}.
		\par 
		By the definitions in \eqref{eq:defab} and the symmetries in \eqref{eq:sym}, we obtain the stated symmetry relations \eqref{eq:absym}.

		When $z$ lies on a spectral band, its the boundary values satisfy
		\begin{equation*}
			a_{\pm }(z) =
			\lim_{s \to z_{\pm}} a(s)
			=
			\lim_{s \to z_{\pm}} \overline{a(\bar{s}^{-1})}
			=
			\lim_{\bar{s}^{-1} \to z_{\mp}} \overline{a(\bar{s}^{-1})}
			= \overline{a_{\mp}(z)}, \quad z\in \Sigma_1^{\ell} \cup \Sigma_1^r, 
		\end{equation*}
		and for $z\in\Sigma_1^r$, using the definition of $b_{1\pm}$ and symmetry \eqref{eq:sym}, we obtain
		\begin{equation*}
			\begin{aligned}
				b_{1\pm}(z)
				=&\frac{\det\left[\Psi_{1\pm}^r(x;z),\Psi_{1\pm}^{\ell}(x;z)\right]}{1-z^{-2}}
				=\frac{\det\left[z^{-1}\sigma_1\overline{\Psi_{1\mp}^r(x;z)},z^{-1}\sigma_1\overline{\Psi_{1\mp}^{\ell}(x;z)}\right]}{1-z^{-2}} \\
				=& \overline{\frac{\det\left[\Psi_{1\mp}^r(x;z),\Psi_{1\mp}^{\ell}(x;z)\right]}{1-z^{-2}}}=\overline{b_{1\mp}(z)}. 
			\end{aligned}
		\end{equation*}
		The case of $\Sigma_1^{\ell}$ can be obtained similarly.
		\par 
		Finally, we prove the asymptotic formulas. The first-order large-$z$ expansion obtained from the Volterra equations gives
		\begin{equation*}
			\Psi^s(x;z)e^{i(x-x_0^s)k(z)\sigma_3}
			=I+\mathcal{O}(z^{-1}),
			\qquad z\to\infty.
		\end{equation*}
		Substitution into the determinant formula for $a(z)$ yields
		\begin{equation*}
			\begin{aligned}
				a(z)
				=e^{i(x_0^\ell-x_0^r)k(z)}
				\frac{\det\left[e_1+\mathcal{O}(z^{-1}),e_2+\mathcal{O}(z^{-1})\right]}{1-z^{-2}}
				=e^{i(x_0^\ell-x_0^r)k(z)}
				\left(1+\mathcal{O}(z^{-1})\right),
				\qquad z\to\infty.
			\end{aligned}
		\end{equation*}
		This proof \eqref{eq:a-infinity-zero-asymptotics} at $z=\infty$. The behavior $z=0$ follows from the symmetry \eqref{eq:absym}.
		\par 
		The decay estimate \eqref{eq:b-infinity-zero-asymptotics} for $b(z)$ requires the cancellation of the first three terms in the large-$z$ expansion of the determinant defining $b(z)$. This calculation is carried out separately in Appendix \ref{app:b-large-z}.
	\end{proof}
	\par 
	Next, we will derive the transformation relations among the scattering coefficients.
	\begin{pro}\label{pro3.6}
		Suppose that
		$	u(x)-u_0^\ell(x)\in L^1(\mathbb{R}^-)$,
		and 
		$u(x)-u_0^r(x)\in L^1(\mathbb{R}^+)$.
		Then the scattering coefficients satisfy the following relations.
		
		\begin{enumerate}
			
			\item Away from the common spectral bands, one has
			\begin{align}
				a_\pm(z)
				&=
				\mp i b_{2\mp}(z), \quad 
				z\in\Sigma_1^\ell\setminus\Sigma_1^r,
				\quad
				&&a_\pm(z)
				=
				\pm i b_{1\mp}(z),  \quad 
				z\in\Sigma_1^r\setminus\Sigma_1^\ell,
				\label{eq:a-b2-transformation}
				\\
				a_\pm^*(z)
				&=
				\pm i b_{2\mp}^*(z), \quad 
				z\in\Sigma_2^\ell\setminus\Sigma_2^r,
				\quad 
				&&a_\pm^*(z)
				=
				\mp i b_{1\mp}^*(z), \quad 
				z\in\Sigma_2^r\setminus\Sigma_2^\ell.
				\label{eq:astar-b2-transformation}
			\end{align}
			Consequently,
			\begin{equation}\label{eq:abrel}
				\frac{a_+(z)}{a_-(z)}
				=
				\begin{cases}
					-\dfrac{b_{2-}(z)}{b_{2+}(z)},
					&
					z\in\Sigma_1^\ell\setminus\Sigma_1^r,
					\\[2ex]
					-\dfrac{b_{1-}(z)}{b_{1+}(z)},
					&
					z\in\Sigma_1^r\setminus\Sigma_1^\ell,
				\end{cases}
				\qquad \qquad 
				\frac{a_+^*(z)}{a_-^*(z)}
				=
				\begin{cases}
					-\dfrac{b_{2-}^*(z)}{b_{2+}^*(z)},
					&
					z\in\Sigma_2^\ell\setminus\Sigma_2^r,
					\\[2ex]
					-\dfrac{b_{1-}^*(z)}{b_{1+}^*(z)},
					&
					z\in\Sigma_2^r\setminus\Sigma_2^\ell.
				\end{cases}
			\end{equation}
			
			\item For
			$z\in\Sigma_1^r\cap\Sigma_1^\ell$,
			\begin{equation}\label{b2-b1-}
				\begin{pmatrix}
					a_+(z) & b_{2+}(z)
					\\
					b_{1+}(z) & a_+^*(z)
				\end{pmatrix}
				=
				\begin{pmatrix}
					a_-^*(z) & b_{1-}(z)
					\\
					b_{2-}(z) & a_-(z)
				\end{pmatrix}.
			\end{equation}
			The corresponding relation on
			$\Sigma_2^r\cap\Sigma_2^\ell$
			follows from the symmetry \eqref{Syms}.
		\end{enumerate}
	\end{pro}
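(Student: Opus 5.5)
The relations follow from the determinant formulas \eqref{eq:defab} combined with the band jump \eqref{eq:Psijump} of Proposition~\ref{pro3.3}. The key fact is this: on $\Sigma^s$ the boundary values of $\Psi^s$ are related by $\Psi^s_+=\Psi^s_-(\mp i\sigma_1)$, while the Jost solution of the \emph{other} background has no jump there, provided the point is off that background's band. Explicitly, on $\Sigma_1^s$,
\begin{equation*}
\Psi^s_{1+}=-i\Psi^s_{2-},\qquad \Psi^s_{2+}=-i\Psi^s_{1-}.
\end{equation*}
On $\Sigma_2^s$ the same identities hold with $-i$ replaced by $+i$.

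\textbf{Case $z\in\Sigma_1^\ell\setminus\Sigma_1^r$.} Here $\Psi^r$ is continuous across the arc and $\Psi^\ell$ jumps. By \eqref{eq:defab},
\begin{equation*}
a_+(1-z^{-2})=\det[\Psi^\ell_{1+},\Psi^r_2]=-i\det[\Psi^\ell_{2-},\Psi^r_2]=i\det[\Psi^r_2,\Psi^\ell_{2-}]=-i\,b_{2-}(1-z^{-2}).
\end{equation*}
This gives $a_+=-ib_{2-}$. Similarly,
\begin{equation*}
a_-(1-z^{-2})=\det[\Psi^\ell_{1-},\Psi^r_2]=\det[i\Psi^\ell_{2+},\Psi^r_2]=-i\det[\Psi^r_2,\Psi^\ell_{2+}],
\end{equation*}
using $\Psi^\ell_{1-}=i\Psi^\ell_{2+}$, which follows by inverting the jump. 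Hence $a_-=ib_{2+}$.

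\textbf{Case $z\in\Sigma_1^r\setminus\Sigma_1^\ell$.} Now $\Psi^\ell_1$ is continuous and $\Psi^r$ jumps. Writing $\Psi^r_{2\pm}=\mp i\Psi^r_{1\mp}$ and using $b_{1\mp}(1-z^{-2})=\det[\Psi^r_{1\mp},\Psi^\ell_1]$ gives
\begin{equation*}
a_\pm(1-z^{-2})=\det[\Psi^\ell_1,\mp i\Psi^r_{1\mp}]=\pm i\,b_{1\mp}(1-z^{-2}),
\end{equation*}
that is, $a_\pm=\pm ib_{1\mp}$.

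Dividing the two identities in each case yields the ratios \eqref{eq:abrel}. The lower-arc statements \eqref{eq:astar-b2-transformation} follow by applying Schwarz conjugation $g\mapsto g^*$. Conjugation maps $\Sigma_1^s$ to $\Sigma_2^s$ and reverses the side labels $\pm$ (equivalently, the jump there is $+i\sigma_1$), which accounts for the flipped signs. I will check this against \eqref{Syms} and \eqref{eq:sym} rather than recompute.

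\textbf{Case $z\in\Sigma_1^r\cap\Sigma_1^\ell$.} Both Jost solutions jump by $-i\sigma_1$. From \eqref{eq:scat},
\begin{equation*}
\Psi^\ell_+=\Psi^r_+S_+,\qquad \Psi^\ell_-(-i\sigma_1)=\Psi^r_-(-i\sigma_1)S_+,
\end{equation*}
so $\Psi^\ell_-=\Psi^r_-\,\sigma_1S_+\sigma_1$. Comparing with $\Psi^\ell_-=\Psi^r_-S_-$ gives $S_-=\sigma_1S_+\sigma_1$. This is an entrywise swap of both rows and columns, and it is exactly \eqref{b2-b1-} once the entries are named as in the displayed form of $S_\pm$. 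For that identification I will use the determinant formulas for the entries of $S$: the $(2,2)$ entry is $\det[\Psi^r_1,\Psi^\ell_2]/(1-z^{-2})$, which the symmetry \eqref{eq:sym} identifies with $a^*$. The relation on $\Sigma_2^r\cap\Sigma_2^\ell$ then follows from \eqref{Syms}.

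\textbf{Main obstacle.} The one genuinely delicate point is bookkeeping. I have to make sure that the boundary values of the columns used in each determinant come from the correct side, and that the normalization $1-z^{-2}=\det\Psi^s$ does not itself jump. It does not: $\det\Psi^s$ is continuous because $\det(-i\sigma_1)=1$, and the scalar factor is analytic off $z=\pm1$.
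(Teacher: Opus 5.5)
This is essentially the paper's proof. The non-overlap identities come from inserting the column jumps \eqref{eq:Psijump} into the determinant formulas \eqref{eq:defab}, and the overlap identity comes from $S_+=\sigma_1S_-\sigma_1$ (equivalently your $S_-=\sigma_1S_+\sigma_1$); your computations on the upper arcs and on the overlap are correct.

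One correction for the lower arcs. $\Sigma_1^s$ and $\Sigma_2^s$ are both oriented counterclockwise, so Schwarz conjugation maps the $+$ side (inside $\mathbb{T}$) to the $+$ side, not the $-$ side. Hence $a^*_\pm(z)=\overline{a_\pm(\bar z)}$, and the sign flip in \eqref{eq:astar-b2-transformation} comes only from $\overline{\mp i}=\pm i$. If you also reversed the side labels, you would count the flip twice and get the wrong sign.
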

	
	\begin{proof}
		We first consider the parts of the spectral bands which do not overlap. For $z\in\Sigma_1^r\setminus\Sigma_1^\ell$, only the right Jost solution has a jump. Thus the jump relations of the Jost solutions \eqref{eq:Psijump} give
		\begin{align*}
			a_{\pm}(z)
			=
			\frac{
				\det[
				\Psi_1^\ell(x;z),
				\Psi_{2\pm}^r(x;z)
				]
			}{
				1-z^{-2}
			}
			=
			\mp i
			\frac{
				\det[
				\Psi_1^\ell(x;z),
				\Psi_{1\mp}^r(x;z)
				]
			}{
				1-z^{-2}
			}
			=
			\pm i b_{1\mp}(z).
		\end{align*}
		The remaining cases in \eqref{eq:a-b2-transformation} and \eqref{eq:astar-b2-transformation} can be obtained similarly. The quotient identities follow immediately by dividing the corresponding boundary-value relations.
		\par 
		We next consider
		$z\in\Sigma_1^r\cap\Sigma_1^\ell$.
		Using the scattering relation and the jump relations of the
		Jost solutions, we obtain
		\begin{align*}
			\Psi_+^\ell(x;z)
			=
			\Psi_-^\ell(x;z)(-i\sigma_1)
			=
			\Psi_-^r(x;z)S_-(z)(-i\sigma_1)
			=
			\Psi_+^r(x;z)
			(i\sigma_1)S_-(z)(-i\sigma_1)
			=
			\Psi_+^r(x;z)
			\sigma_1S_-(z)\sigma_1.
		\end{align*}
		On the other hand, $\Psi_+^\ell(x;z)
		=
		\Psi_+^r(x;z)S_+(z)$. Hence $S_+(z)=\sigma_1S_-(z)\sigma_1$, which gives \eqref{b2-b1-}.
	\end{proof}
	\par 
	Define the $2\times2$ matrix-valued function
	\begin{equation}\label{eq:def-M-direct}
		\Phi(x;z)
		=
		\begin{cases}
			\displaystyle
			\left(
			\frac{\Psi_1^\ell(x;z)}{a(z)},
			\Psi_2^r(x;z)
			\right)
			\exp\left[
			\frac{i}{2}
			(x-x_0^r)(z-z^{-1})\sigma_3
			\right],
			&
			z\in
			\mathbb{C}^+
			\setminus
			\left(
			\Sigma_1^\ell\cup\Sigma_1^r
			\right),
			\\[3ex]
			\displaystyle
			\left(
			\Psi_1^r(x;z),
			\frac{\Psi_2^\ell(x;z)}{a^*(z)}
			\right)
			\exp\left[
			\frac{i}{2}
			(x-x_0^r)(z-z^{-1})\sigma_3
			\right],
			&
			z\in
			\mathbb{C}^-
			\setminus
			\left(
			\Sigma_2^\ell\cup\Sigma_2^r
			\right).
		\end{cases}
	\end{equation}
	
	Then the matrix-valued function $\Phi(x;z)$ satisfies the following RHP.
	
	\begin{RHP}\label{RHP:direct-M}
		The matrix function $\Phi(x;z)$ satisfies the following properties.
		
		\begin{enumerate}
			
			\item
			$\Phi(x;z)$ is analytic for
			$
			z\in
			\mathbb{C}
			\setminus
			\left(
			\mathbb{R}
			\cup
			\Sigma^\ell
			\cup
			\Sigma^r
			\cup
			\{0\}
			\right)$,
			with at most fourth-root singularities at the endpoints of the
			spectral bands.
			
			\item
			The boundary values of $\Phi(x;z)$ satisfy
			\begin{equation*}
				\Phi_+(x;z)
				=
				\Phi_-(x;z)V^{(\Phi)}(x;z),
			\end{equation*}
			where
			\begin{equation*}
				V^{(\Phi)}(x;z)
				=
				\begin{cases}
					
					\begin{pmatrix}
						\frac{a_-(z)}{a_+(z)}
						&
						-i
						e^{-i(x-x_0^r)(z-z^{-1})}
						\\[2ex]
						0
						&
						\frac{a_+(z)}{a_-(z)}
					\end{pmatrix},
					&
					z\in
					\Sigma_1^r\setminus\Sigma_1^\ell,
					
					\\[6ex]
					
					\begin{pmatrix}
						-\frac{i b_{2-}(z)}{a_+(z)}
						&
						-i
						e^{-i(x-x_0^r)(z-z^{-1})}
						\\[2ex]
						-\frac{i}{a_+(z)a_-(z)}
						e^{i(x-x_0^r)(z-z^{-1})}
						&
						\frac{i b_{1-}(z)}{a_-(z)}
					\end{pmatrix},
					&
					z\in
					\Sigma_1^r\cap\Sigma_1^\ell,
					
					\\[6ex]
					
					\begin{pmatrix}
						1
						&
						0
						\\[2ex]
						-\frac{i}{a_+(z)a_-(z)}
						e^{i(x-x_0^r)(z-z^{-1})}
						&
						1
					\end{pmatrix},
					&
					z\in
					\Sigma_1^\ell\setminus\Sigma_1^r,
					
					\\[6ex]
					
					\begin{pmatrix}
						\frac{1}{a(z)a^*(z)}
						&
						-\frac{b^*(z)}{a^*(z)}
						e^{-i(x-x_0^r)(z-z^{-1})}
						\\[2ex]
						\frac{b(z)}{a(z)}
						e^{i(x-x_0^r)(z-z^{-1})}
						&
						1
					\end{pmatrix},
					&
					z\in\mathbb{R},
					
					\\[6ex]
					
					\begin{pmatrix}
						1
						&
						\frac{i}{a_+^*(z)a_-^*(z)}
						e^{-i(x-x_0^r)(z-z^{-1})}
						\\[2ex]
						0
						&
						1
					\end{pmatrix},
					&
					z\in
					\Sigma_2^\ell\setminus\Sigma_2^r,
					
					\\[6ex]
					
					\begin{pmatrix}
						-\frac{i b_{1-}^*(z)}{a_-^*(z)}
						&
						\frac{i}{a_+^*(z)a_-^*(z)}
						e^{-i(x-x_0^r)(z-z^{-1})}
						\\[2ex]
						i
						e^{i(x-x_0^r)(z-z^{-1})}
						&
						\frac{i b_{2-}^*(z)}{a_+^*(z)}
					\end{pmatrix},
					&
					z\in
					\Sigma_2^r\cap\Sigma_2^\ell,
					
					\\[6ex]
					
					\begin{pmatrix}
						\frac{a_+^*(z)}{a_-^*(z)}
						&
						0
						\\[2ex]
						i
						e^{i(x-x_0^r)(z-z^{-1})}
						&
						\frac{a_-^*(z)}{a_+^*(z)}
					\end{pmatrix},
					&
					z\in
					\Sigma_2^r\setminus\Sigma_2^\ell.
					
				\end{cases}
			\end{equation*}
			
			\item
			As $z\to\infty$,
			\[
			\Phi(x;z)
			=
			I+\mathcal{O}(z^{-1}).
			\]
			As $z\to0$,
			\[
			\Phi(x;z)
			=
			\frac{\sigma_1}{z}
			+
			\mathcal{O}(1).
			\]
			
			\item
			The matrix $\Phi(x;z)$ satisfies the symmetry relations
			\[
			\Phi(x;z)
			=
			\sigma_1\Phi^*(x;z)\sigma_1
			=
			z^{-1}\Phi(x;z^{-1})\sigma_1.
			\]
			
		\end{enumerate}
	\end{RHP}
	
	\begin{proof}
		We calculate the jump matrices directly from \eqref{eq:def-M-direct} and recall the jump relations \eqref{eq:Psijump}.
		\par 
		First, let $z\in\Sigma_1^r\setminus\Sigma_1^\ell$. The function $\Psi_1^\ell$ has no jump on this part of the contour, whereas $\Psi_{2+}^r=-i\Psi_{1-}^r$. By the scattering relation \eqref{eq:scat}, we have
		$
		\Psi_{1-}^r
		=
		\frac{\Psi_1^\ell-b_{1-}\Psi_{2-}^r}{a_-}.
		$
		Hence
		\begin{align*}
			\Psi_{2+}^r
			e^{-\frac{i}{2}(x-x_0^r)(z-z^{-1})}
			={}
			-i
			e^{-i(x-x_0^r)(z-z^{-1})}
			\frac{\Psi_1^\ell}{a_-}
			e^{\frac{i}{2}(x-x_0^r)(z-z^{-1})}
			+
			\frac{i b_{1-}}{a_-}
			\Psi_{2-}^r
			e^{-\frac{i}{2}(x-x_0^r)(z-z^{-1})}.
		\end{align*}
		Since Proposition~\ref{pro3.6} gives $a_+=i b_{1-}$, the jump matrix is
		\[
		\begin{pmatrix}
			\frac{a_-}{a_+}
			&
			-i e^{-i(x-x_0^r)(z-z^{-1})}
			\\[2ex]
			0
			&
			\frac{a_+}{a_-}
		\end{pmatrix}.
		\]
		
		Next, let $z\in\Sigma_1^r\cap\Sigma_1^\ell$. We have $\Psi_{1+}^\ell=-i\Psi_{2-}^\ell$, $\Psi_{2+}^r=-i\Psi_{1-}^r$. The scattering relations \eqref{eq:scat} imply
		\[
		a_-\Psi_{2-}^\ell
		=
		b_{2-}\Psi_{1-}^\ell+\Psi_{2-}^r, \quad 
		a_-\Psi_{1-}^r
		=
		\Psi_{1-}^\ell-b_{1-}\Psi_{2-}^r.
		\]
		Therefore,
		\begin{equation*}
			\frac{\Psi_{1+}^\ell}{a_+}
			e^{\frac{i}{2}(x-x_0^r)(z-z^{-1})}
			={}
			-\frac{i b_{2-}}{a_+}
			\frac{\Psi_{1-}^\ell}{a_-}
			e^{\frac{i}{2}(x-x_0^r)(z-z^{-1})}-
			\frac{i}{a_+a_-}
			e^{i(x-x_0^r)(z-z^{-1})}
			\Psi_{2-}^r
			e^{-\frac{i}{2}(x-x_0^r)(z-z^{-1})},
		\end{equation*}
		and
		\begin{equation*}
			\Psi_{2+}^r
			e^{-\frac{i}{2}(x-x_0^r)(z-z^{-1})}
			={}
			-i
			e^{-i(x-x_0^r)(z-z^{-1})}
			\frac{\Psi_{1-}^\ell}{a_-}
			e^{\frac{i}{2}(x-x_0^r)(z-z^{-1})}+
			\frac{i b_{1-}}{a_-}
			\Psi_{2-}^r
			e^{-\frac{i}{2}(x-x_0^r)(z-z^{-1})}.
		\end{equation*}
		This gives the jump matrix on
		$\Sigma_1^r\cap\Sigma_1^\ell$.
		
		For $z\in\Sigma_1^\ell\setminus\Sigma_1^r$, the function $\Psi_2^r$ has no jump, while $\Psi_{1+}^\ell=-i\Psi_{2-}^\ell$. Using
		$
		a_-\Psi_{2-}^\ell
		=
		\Psi_2^r+b_{2-}\Psi_{1-}^\ell,
		$
		we find
		\begin{equation*}
			\frac{\Psi_{1+}^\ell}{a_+}
			e^{\frac{i}{2}(x-x_0^r)(z-z^{-1})}
			={}
			-\frac{i b_{2-}}{a_+}
			\frac{\Psi_{1-}^\ell}{a_-}
			e^{\frac{i}{2}(x-x_0^r)(z-z^{-1})}-
			\frac{i}{a_+a_-}
			e^{i(x-x_0^r)(z-z^{-1})}
			\Psi_2^r
			e^{-\frac{i}{2}(x-x_0^r)(z-z^{-1})}.
		\end{equation*}
		By Proposition~\ref{pro3.6}, $a_+=-i b_{2-}$. Thus the jump matrix is
		\[
		\begin{pmatrix}
			1
			&
			0
			\\[2ex]
			-\frac{i}{a_+a_-}
			e^{i(x-x_0^r)(z-z^{-1})}
			&
			1
		\end{pmatrix}.
		\]
		
		On the real axis, the scattering relation is
		$
		\Psi^\ell
		=
		\Psi^r
		\begin{pmatrix}
			a & b^*
			\\
			b & a^*
		\end{pmatrix}.
		$
		Thus
		$
		\Psi_1^\ell=a\Psi_1^r+b\Psi_2^r$ 
		and 
		$\Psi_2^\ell=b^*\Psi_1^r+a^*\Psi_2^r.
		$
		Solving the second relation for $\Psi_2^r$ and using $aa^*-bb^*=1$, we obtain
		\[
		\frac{\Psi_1^\ell}{a}
		=
		\frac{1}{aa^*}\Psi_1^r
		+
		\frac{b}{a}\frac{\Psi_2^\ell}{a^*}, \quad 
		\Psi_2^r
		=
		-\frac{b^*}{a^*}\Psi_1^r
		+
		\frac{\Psi_2^\ell}{a^*}.
		\]
		This gives the stated jump matrix on $\mathbb{R}$. The jump matrices in the lower half-plane can be derived similarly.
		\par 
		The normalization and symmetry properties follow from the corresponding properties of the Jost solutions \eqref{eq:sym} and the scattering data \eqref{eq:absym}.
	\end{proof}
	\par 
	So far, through the direct scattering analysis, we have obtained the scattering data $a(z)$, $b(z)$, $b_1(z)$, and $b_2(z)$, and constructed RHP~\ref{RHP:direct-M}. Our next goal is to use this RHP as a starting point to construct the full-soliton-gas RHP \ref{RHP2}, thereby revealing the connection between the RHP associated with periodic initial data and the full-soliton-gas RHP. The full-soliton-gas RHP exhibits a remarkably symmetric structure arising from the two background waves at the left and right spatial infinities. This symmetry also suggests that the contributions of the two backgrounds should be completely separated. Consequently, at least at a formal level, the present full-gas RHP can be conveniently reduced to the ``half-gas RHP'', i.e., the setting with a periodic-wave background on one side and a plane-wave background on the other; see \cite{BertolaWangYanZhu2026}.
	
	\par 
	As mentioned in Remark~\ref{b1b2}, we regard $b_1(z)$ as containing only the information associated with the right background wave, whereas $b_2(z)$ contains only the information associated with the left background wave. This observation motivates us to decompose $a(z)$ into two more general scattering coefficients, $a_1(z)$ and $a_2(z)$, so that 
	\begin{equation}\label{proaa}
		a_{1+}(z) = a_{1-}(z), \quad z \in \Sigma_1^{\ell}\setminus\Sigma_1^r, \quad \text{and}  \quad a_{2+}(z) = a_{2-}(z), \quad z \in \Sigma_1^r\setminus\Sigma_1^{\ell}.
	\end{equation}
	Additionally, to symmetrize the RHP \ref{RHP:direct-M}, we seek the following factorization of $a(z)$:
	\begin{equation*}
		a(z) = a_1(z) a_2(z),
	\end{equation*}
	where 
	\begin{equation}\label{eq:defa1a2}
		a_1(z) = \sqrt{a(z)/\gamma(z)} e^{ -\frac{i}{4}(x_0^{\ell} - x_0^r )(z - z^{-1}) }, \quad 
		a_2(z) = \sqrt{a(z)\gamma(z)} e^{ \frac{i}{4}(x_0^{\ell} - x_0^r )(z - z^{-1}) }, \quad z \in \mathbb{C}^+.
	\end{equation}
	Here, $\gamma(z)$ is a function to be determined.
	\par 
	Then we introduce the transformation 
	$
	m(z;x) = \gamma(\infty)^{\frac{\sigma_3}{2}} \Phi(z;x) \begin{cases}
		a_1(z)^{\sigma_3}, & z \in \mathbb{C}^+, \\
		a_1^*(z)^{-\sigma_3}, & z \in \mathbb{C}^-.
	\end{cases} 
	$
	This gives a symmetric ansatz
	\begin{equation}\label{RHPm}
		m(z;x) = \begin{cases}
			\displaystyle
			\gamma(\infty)^{\frac{\sigma_3}{2}}
			\left(
			\frac{\Psi_1^\ell(x;z)}{a_2(z)},
			\frac{\Psi_2^r(x;z)}{a_1(z)}
			\right)
			\exp\left[
			\frac{i}{2}
			(x- x_0^r  )(z-z^{-1})\sigma_3
			\right],
			&
			z\in
			\mathbb{C}^+
			\setminus
			\left(
			\Sigma_1^\ell\cup\Sigma_1^r
			\right),
			\\[3ex]
			\displaystyle
			\gamma(\infty)^{\frac{\sigma_3}{2}}
			\left(
			\frac{\Psi_1^r(x;z)}{a^*_1(z)},
			\frac{\Psi_2^\ell(x;z)}{a^*_2(z)}
			\right)
			\exp\left[
			\frac{i}{2}
			(x- x_0^r  )(z-z^{-1})\sigma_3
			\right],
			&
			z\in
			\mathbb{C}^-
			\setminus
			\left(
			\Sigma_2^\ell\cup\Sigma_2^r
			\right).
		\end{cases}
	\end{equation}
	\par 
	In order for the matrix $m(z;x)$ in \eqref{RHPm} to satisfy the full soliton gas RHP \ref{RHP2}, it remains to determine the function $\gamma(z)$. We now present the construction of $\gamma(z)$.
	\par 
	In order to establish property \eqref{proaa}, by using \eqref{eq:abrel}, we need
	\begin{equation}\label{eq:gamju1}
		\frac{\gamma_+(z)}{\gamma_-(z)} = -\frac{b_{2-}(z)}{b_{2+}(z)}, \quad z \in \Sigma_1^{\ell} \setminus \Sigma_1^r, \qquad 
		\frac{\gamma_+(z)}{\gamma_-(z)} = -\frac{b_{1+}(z)}{b_{1-}(z)}, \quad z \in \Sigma_1^r \setminus \Sigma_1^{\ell}.
	\end{equation}
	For $z \in \Sigma_1^r \cap \Sigma_1^{\ell}$, the jump matrix of $m(z;x)$ is given by
	\begin{equation}\label{eq:V^m}
		V^m(z)  = \begin{pmatrix}
			-
			\frac{
				i b_{2-}(z)
			}{
				a_{2+}(z) a_{1-}(z)
			}
			&
			\frac{-i}{a_{1-}(z) a_{1+}(z)}
			e^{-i(x-x_0^r)(z-z^{-1})}
			\\[2ex]
			-
			\frac{
				i a_{1-}(z) a_{1+}(z)
			}{
				a_+(z)a_-(z)
			}
			e^{i(x-x_0^r)(z-z^{-1})} 
			&
			\frac{
				i
				b_{1-}(z)
			}{
				a_{2-}(z) a_{1+}(z)
			}
		\end{pmatrix}.
	\end{equation}
	This gives
	\begin{equation}\label{eq:gamju2}
		\frac{\gamma_+(z)}{\gamma_-(z)} = \frac{ a_{2+}(z) a_{1-}(z) }{ a_{2-}(z) a_{1+}(z) } 
		= -\frac{ b_{2-}(z)  }{ b_{1-}(z) }
		= -\frac{b_{1+}(z)}{b_{1-}(z)}, \qquad z \in \Sigma_1^r \cap \Sigma_1^{\ell},
	\end{equation}
	where the second equality follows from the requirement that the jump matrix satisfies $V^{m}_{11}(z)=V^{m}_{22}(z)$, while the third equality follows from \eqref{b2-b1-}.
	\par 
	For $ z \in \mathbb{R} $, the jump matrix of $m(z;x)$ is given by
	\begin{equation}\label{eq:VmR}
		V^m(z) = \begin{pmatrix}
			\frac{a^*_{1-}(z) a_{1+}(z)}{|a(z)|^2}
			&
			-
			\frac{b^*(z)}{a^*(z)}
			e^{-i(x-x_0^r)(z-z^{-1})} \frac{a^*_{1-}(z)}{a_{1+}(z)}
			\\[2ex]
			\frac{b(z)}{a(z)}
			e^{i(x-x_0^r)(z-z^{-1})} \frac{a_{1+}(z)}{a^*_{1-}(z)}
			&
			\frac{1}{a^*_{1-}(z) a_{1+}(z)}
		\end{pmatrix},
	\end{equation}
	where the $(2,2)$-entry of the matrix is equal to $1$, namely, $\frac{1}{a^*_{1-}(z) a_{1+}(z)} = 1$. It follows that
	\begin{equation*}
		\frac{\gamma_+(z)}{\gamma_-(z)} =  \gamma_+(z) \gamma^*_-(z) = |a_+(z)|^2 = |a(z)|^2, \qquad z\in\mathbb{R}, 
	\end{equation*}
	where we have also used the assumption that $ \gamma^*(z) = \gamma^{-1}(z) $.
	\par 
	By symmetry, we obtain the jump condition for $\gamma(z)$ in the lower half-plane as
	\begin{equation}\label{eq:gamju3}
		\frac{\gamma_+(z)}{\gamma_-(z)} = -\frac{b_{2+}^*(z)}{b_{2-}^*(z)}, \quad z \in \Sigma_2^{\ell} \setminus \Sigma_2^r, \qquad 
		\frac{\gamma_+(z)}{\gamma_-(z)} = -\frac{b_{1-}^*(z)}{b_{1+}^*(z)}, \quad z \in \Sigma_2^r \setminus \Sigma_2^{\ell}, \qquad 
		\frac{\gamma_+(z)}{\gamma_-(z)} = -\frac{b_{1-}^*(z)}{b_{1+}^*(z)}, \quad z \in \Sigma_2^r \cap \Sigma_2^{\ell}.
	\end{equation}
	Still, we restrict boundary value $|\gamma(z)| < \infty$, as $z \to \infty$. The Sokhotski-Plemelj formula then gives the following expression for $\gamma(z)$:
	\begin{align}
		\log \gamma(z)
		=&
		\frac{1}{2\pi i}
		\Bigg[
		\int_{\Sigma_1^\ell\setminus\Sigma_1^r}
		\log\left(-\frac{b_{2-}(s)}{b_{2+}(s)}\right) \delta(s,z)\,ds
		+
		\int_{\Sigma_1^r\cap\Sigma_1^\ell}
		\log\left(-\frac{b_{1+}(s)}{b_{1-}(s)}\right) \delta(s,z)\,ds 	\notag\\
		&+
		\int_{\Sigma_1^r\setminus\Sigma_1^\ell}
		\log\left(-\frac{b_{1+}(s)}{b_{1-}(s)}\right) \delta(s,z)\,ds
		+
		\int_{\mathbb{R}}
		\log (|a(s)|^2) \delta(s,z)\,ds
		+
		\int_{\Sigma_2^\ell\setminus\Sigma_2^r}
		\log\left(-\frac{b_{2+}^*(s)}{b_{2-}^*(s)}\right) \delta(s,z)\,ds	\notag\\
		&+
		\int_{\Sigma_2^r\cap\Sigma_2^\ell}
		\log\left(-\frac{b_{1-}^*(s)}{b_{1+}^*(s)}\right) \delta(s,z)\,ds
		+
		\int_{\Sigma_2^r\setminus\Sigma_2^\ell}
		\log\left(-\frac{b_{1-}^*(s)}{b_{1+}^*(s)}\right) \delta(s,z)\,ds
		\Bigg],
		\label{eq:gamma}
	\end{align}
	where $\delta(s,z) = \frac{1}{s - z} - \frac{1}{2s}$ and $\log\gamma(\infty) \in i \mathbb{R}$. One can also obtain the symmetry
	\begin{equation}\label{eq:gammsy1}
		\gamma(z) = \gamma^*(z^{-1}) = (\gamma^*(z))^{-1}, \quad z\in\mathbb{C}\setminus(\mathbb{R} \cup \Sigma^\ell \cup \Sigma^r ), \qquad 
		\gamma_{\pm}(z) = \gamma^*_{\mp}(z^{-1}), \quad z\in \Sigma^\ell \cup \Sigma^r.
	\end{equation}
	\par 
	In summary, we have the following proposition concerning the function $\gamma(z)$.
	\begin{pro}\label{prop:gamma-properties}
		The function $\gamma(z)$, defined by \eqref{eq:gamma}, is analytic in $\mathbb{C}\setminus(\mathbb{R}\cup\Sigma^\ell\cup\Sigma^r)$ and satisfies the symmetry relation \eqref{eq:gammsy1}, the jump conditions \eqref{eq:gamju1}, \eqref{eq:gamju2}, and \eqref{eq:gamju3}, as well as the asymptotic condition
		\begin{equation}\label{eq:gamma-asym}
			\gamma(z) = \gamma(\infty) + \mathcal{O}(z^{-1}), \quad z\to\infty, \qquad
			\gamma(z) = \gamma(\infty)^{-1} + \mathcal{O}(z), \quad  z\to 0,
		\end{equation}
		where by symmetry, we also see that $\log \gamma(\infty) \in i\mathbb{R} $.
		Moreover, if $ u(x)-u_0^\ell(x)\in L^{1,2}(\mathbb{R}^-)$, $u(x)-u_0^r(x)\in L^{1,2}(\mathbb{R}^+)$, then near the endpoints,
		\begin{equation}\label{eq:endpoint-gamma}
			\begin{aligned}
				\gamma(z)&=(z-\zeta)^{-1/4} \left( e^{C_\zeta} + \mathcal{O}(| z - \zeta |^{1/2}) \right) ,
				&&\zeta\in\left\{
				\eta_1^\ell,\eta_2^\ell,
				\overline{\eta_1^r},\overline{\eta_2^r}
				\right\},\\
				\gamma(z)&=(z-\zeta)^{1/4} \left( e^{C_\zeta} + \mathcal{O}(| z - \zeta |^{1/2}) \right),
				&&\zeta\in\left\{
				\eta_1^r,\eta_2^r,
				\overline{\eta_1^\ell},\overline{\eta_2^\ell}
				\right\},
			\end{aligned}
		\end{equation}
		where $ C_\zeta $ is a constant associated with the endpoint $\zeta$.
	\end{pro}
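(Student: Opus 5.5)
The proof will be read off from the Cauchy-type representation \eqref{eq:gamma}, treating the four assertions (analyticity, jumps and symmetry, behaviour at $\infty$ and $0$, endpoint behaviour) in that order.

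\emph{Analyticity and jumps.} The kernel $\delta(s,z)=\frac{1}{s-z}-\frac{1}{2s}$ differs from the Cauchy kernel only by a $z$-independent term. Hence $\log\gamma$ is analytic off $\mathbb R\cup\Sigma^\ell\cup\Sigma^r$ as soon as each density is integrable against $\delta(s,z)$ for fixed $z$ off the contour.
\begin{itemize}
\item On the bands, integrability holds because $b_{1\pm},b_{2\pm}$ are continuous and nonvanishing in the interior. Nonvanishing follows from Proposition~\ref{pro3.6}, $a_\pm=\pm i b_{1\mp}$ etc., together with Assumption~\ref{ass:no-discrete-spectrum} and continuity of $a$ up to the boundary. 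At the endpoints the logarithms are at worst logarithmically singular.
\item I must fix a continuous branch of each logarithm. On $\Sigma_1^r\setminus\Sigma_1^\ell$, Proposition~\ref{prop:scattering-endpoint-behavior} gives $b_{1+}=\overline{b_{1-}}$, so $-b_{1+}/b_{1-}$ is unimodular and I take $\log(-b_{1+}/b_{1-})=i\arg$. The same works on the other bands.
\item On $\mathbb R$, $aa^*-bb^*=1$ gives $|a|^2\ge1$, so $\log|a|^2\ge 0$ is real.
\item The real density is integrable at $\infty$ and at $0$ because of the regularity assumptions. By \eqref{eq:a-infinity-zero-asymptotics}, $|a|^2-1=|b|^2=\mathcal O(z^{-8})$ at infinity and $\mathcal O(z^8)$ at zero. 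The pole of $a$ at $z=\pm1$ makes $\log|a|^2$ only logarithmically singular there, which is integrable.
\end{itemize}
The Sokhotski--Plemelj formula then gives the additive jumps $\log\gamma_+-\log\gamma_-=$ density, hence \eqref{eq:gamju1}, \eqref{eq:gamju2} and \eqref{eq:gamju3}. On the real axis it gives $\gamma_+/\gamma_-=|a|^2$.

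\emph{Symmetry and asymptotics.} The role of the correction $-\frac{1}{2s}$ is exactly to make the inversion symmetry work. Substituting $s\mapsto \bar s^{-1}$ maps $\Sigma_1$ onto $\Sigma_2$ and $\mathbb R$ to itself (with orientation changes). One computes $\overline{\delta(\bar s^{-1},\bar z^{-1})}\,d(\bar s^{-1})$ in terms of $\delta(s,z)\,ds$ up to sign and checks that the pairs of densities on $\Sigma_1$ and $\Sigma_2$ map onto each other. For example, the density on $\Sigma_2^r$ is $\log(-b^*_{1-}/b^*_{1+})$, the conjugate-reflection of the one on $\Sigma_1^r$, and on $\mathbb R$ one uses $a(z)=a^*(z^{-1})$ from \eqref{eq:absym}. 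This yields $\log\gamma(z)=\overline{\log\gamma(\bar z^{-1})}$. Similarly, the reality and unimodularity of the densities give $\log\gamma(z)=-\overline{\log\gamma(\bar z)}$, hence $\gamma=(\gamma^*)^{-1}$. I expect this to be pure bookkeeping of orientations, but it is the place most prone to sign errors and I would verify it on each of the seven pieces.

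For \eqref{eq:gamma-asym}, letting $z\to\infty$ gives $\log\gamma(\infty)=-\frac{1}{2\pi i}\int(\text{density})\frac{ds}{2s}$, with an $\mathcal O(z^{-1})$ error by dominated convergence using the decay above. At $z=0$ the kernel becomes $\frac1s-\frac1{2s}=+\frac1{2s}$, giving $\gamma(0)=\gamma(\infty)^{-1}$ with an $\mathcal O(z)$ error. Then $\log\gamma(\infty)\in i\mathbb R$ follows from combining $\gamma=(\gamma^*)^{-1}$ at $\infty$.

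\emph{Endpoint behaviour.} This is the main obstacle. The plan is to show that near each endpoint $\zeta$ the density tends to a limit, namely $\pm i\pi/2$ (i.e.\ the jump ratio tends to $\pm i$) with an $\mathcal O(|s-\zeta|^{1/2})$ error. The standard local analysis of Cauchy integrals then gives $\frac{1}{2\pi i}\log(z-\zeta)\cdot(\mp i\pi/2)=\mp\frac14\log(z-\zeta)$ plus a Hölder-$\frac12$ remainder, i.e.\ \eqref{eq:endpoint-gamma}.

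To identify the limit I use \eqref{eq:hhpsi}, valid under the $L^{1,2}$ hypothesis. It writes $\Psi^s_\pm=h^s_\pm(k(z))^{\pm1}(\widetilde\Psi^s(\zeta)+\mathcal O(|z-\zeta|^{1/2}))$. Inserting this into the determinant formula \eqref{eq:defab} gives $b_{1\pm}=h^r_\pm{}^{\pm1}(c+\mathcal O(|z-\zeta|^{1/2}))$ near $\eta_j^r$, where $c$ is the same constant for both sides. Therefore $b_{1+}/b_{1-}=h_+^{\pm1}/h_-^{\pm1}\,(1+\mathcal O(|z-\zeta|^{1/2}))$ when $c\ne0$. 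The fourth-root function satisfies $h_+/h_-=e^{\pm i\pi/2}$ on the cut, which produces exactly the $(z-\zeta)^{\pm1/4}$ behaviour; the sign is determined by whether $\zeta$ is an $\eta_1$- or $\eta_2$-type endpoint and by the left/right assignment.

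The delicate points are:
\begin{itemize}
\item the nondegeneracy $c\neq0$, which I would get from the nonvanishing of $a$ near the endpoint (Assumption~\ref{ass:no-discrete-spectrum} together with $a_\pm=\pm ib_{1\mp}$);
\item endpoints lying inside the other background's band, where both $h^\ell$ and $h^r$ factors appear. The assumption of no common endpoints ensures that only one of them is singular there.
\end{itemize}
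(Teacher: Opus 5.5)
Your treatment of analyticity, the jumps, the asymptotics, and of endpoints that are genuine ends of the integration contour (an endpoint of one band lying outside the closure of the other band) is essentially the paper's first case. The gap is elsewhere. Whenever the upper bands intersect, partially or nested, some endpoint of one background's band lies in the interior of the other background's band. The no-common-endpoint clause of Assumption~\ref{ass:no-discrete-spectrum} does not exclude this.

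Take $\zeta=\eta_2^\ell\in\Sigma_1^r$. The contour $\Sigma_1^r$ runs through $\zeta$. On both adjacent pieces ($\Sigma_1^\ell\cap\Sigma_1^r$ before $\zeta$, $\Sigma_1^r\setminus\Sigma_1^\ell$ after it) the density is $\log(-b_{1+}/b_{1-})$, with $b_{1\pm}$ built from $\Psi^r_{1\pm}$ and $\Psi^\ell_1$.
\begin{itemize}
\item \emph{Your step fails.} Since $\zeta$ is interior to $\Sigma_1^r$, the vectors $\Psi^r_{1+}(\zeta)=-i\Psi^r_{2-}(\zeta)$ and $\Psi^r_{1-}(\zeta)$ are linearly independent. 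So your claim that $b_{1\pm}$ is a singular $h$-factor times $c+\mathcal O(|z-\zeta|^{1/2})$ with the \emph{same} $c$ on both sides is false. The constants are $c_\pm=\det[\Psi^r_{1\pm}(\zeta),\widetilde\Psi^\ell_1(\zeta)]/(1-\zeta^{-2})$.
\item \emph{Consequence.} The one-sided limits of $-b_{1+}/b_{1-}$ at $\zeta$ are $-R$ from $\Sigma_1^r\setminus\Sigma_1^\ell$ and $iR$ from the overlap, where $R=c_+/c_-$ is a data-dependent unimodular number. Hence the density tends to $\pm i\pi/2$ from neither side, and the endpoint terms you propose to compute have non-universal coefficients.
\item \emph{Missing idea.} What is universal is the \emph{jump} of the density at $\zeta$. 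On the overlap $\Psi^\ell_1$ is double-valued and contributes $h^\ell_-/h^\ell_+=-i$; on $\Sigma_1^r\setminus\Sigma_1^\ell$ it is single-valued. So, with the branches of the logarithm on the two pieces matched accordingly, the limit from the overlap equals the other limit minus $i\pi/2$.
\item \emph{How it closes.} The terminal-endpoint term of the overlap integral and the initial-endpoint term of the adjacent integral combine to $\frac{1}{2\pi i}\bigl(-\frac{i\pi}{2}\bigr)\log(z-\zeta)=-\frac14\log(z-\zeta)$, and the $R$-dependent parts cancel. The paper implements exactly this: it subtracts $\frac{i\pi}{2}$ from the density on $\Sigma_1^r\setminus\Sigma_1^\ell$, obtaining a density $\mathfrak g$ that is H\"older-$\frac12$ through $\zeta$, and evaluates the removed piece as $\frac14\log\frac{z-\eta_2^r}{z-\eta_2^\ell}$.
\item \emph{At $\zeta=\eta_j^r\in\Sigma_1^\ell$.} You also need \eqref{b2-b1-} ($b_{1+}=b_{2-}$, $b_{1-}=b_{2+}$ on the overlap) to express the density on both sides through $b_2$, because the integrand in \eqref{eq:gamma} switches from $b_2$ to $b_1$ there.
\end{itemize}
Your observation that only one $h$-factor is singular at such $\zeta$ is true, but it does not address this.

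Two smaller points. First, the nondegeneracy $c\neq0$ (and $c_\pm\neq0$ above) does not follow from Assumption~\ref{ass:no-discrete-spectrum}. That assumption only forbids zeros of $a$ in the open set $\mathbb C^+\setminus(\Sigma_1^\ell\cup\Sigma_1^r)$; it says nothing about a vanishing leading coefficient of $a$ at a band edge (an edge resonance). It has to be imposed as a separate non-resonance condition; the paper also uses it tacitly when it replaces the determinant quotient by $1$.

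Second, in the symmetry bookkeeping, $s\mapsto\bar s^{-1}$ fixes $\mathbb T$ pointwise and does not exchange $\Sigma_1$ and $\Sigma_2$. On $\mathbb T$ one has $\overline{\delta(s,\bar z^{-1})}\,\overline{ds}=\delta(s,z)\,ds$. So $\gamma(z)=\overline{\gamma(\bar z^{-1})}$ follows band by band from the densities being purely imaginary, and on $\mathbb R$ (where the map is $s\mapsto s^{-1}$) from $|a(s^{-1})|=|a(s)|$. The pairing of the $\Sigma_1$- and $\Sigma_2$-densities is what enters $\gamma=(\gamma^*)^{-1}$, via $s\mapsto\bar s$.
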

	\begin{proof}
		Here, we only prove the endpoint behavior of $\gamma(z)$, since the remaining properties follow readily from its explicit form and the construction.
		\par 	
		Recall that the fourth-root factor associated with the background $s\in\{\ell,r\}$ is
		\begin{equation*}
			h^s(k(z)) = \left(\frac{z-1}{z+1}\right)^{1/2} \left[ \frac{ (z-\eta_2^s)(z-\overline{\eta_2^s}) }{ (z-\eta_1^s)(z-\overline{\eta_1^s}) } \right]^{1/4}.
		\end{equation*}
		Its boundary values on $\Sigma_1^s$ satisfy
		\begin{equation}\label{eq:h-upper-jump}
			h_+^s(k(z))=i h_-^s(k(z)).
		\end{equation}
		\par 
		We give the details at $\eta_2^\ell$; the other endpoints are treated in the same way. There are two possible configurations.
		
		First suppose that $\eta_2^\ell$ is the terminal endpoint of $\Sigma_1^\ell\setminus\Sigma_1^r$ and that $\eta_2^\ell$ is separated from $\Sigma_1^\ell\cap\Sigma_1^r$ (see Figures \ref{fig:cut1}, \ref{fig:cut2}, and \ref{fig:cut3}.). Since the right Jost solution is single-valued near $\eta_2^\ell$, the determinant definition of $b_2$ gives
		\begin{equation*}
			\frac{b_{2-}(z)}{b_{2+}(z)} = \frac{ h^\ell_+(k(z)) }{ h^\ell_-(k(z)) } 
			\frac{ \det\left[ \Psi_2^r(x;z), h^\ell_-(k(z))\Psi_{2-}^\ell(x;z) \right] }{ \det\left[ \Psi_2^r(x;z), h^\ell_+(k(z))\Psi_{2+}^\ell(x;z) \right] }. 
		\end{equation*}
		Using \eqref{eq:h-upper-jump} and \eqref{eq:hhpsi}, we obtain
		\begin{equation*}
			\ \frac{b_{2-}(z)}{b_{2+}(z)} = i \frac{ \det\left[ \Psi_2^r(x;\eta_2^\ell), \widetilde{\Psi}_2^\ell(x;\eta_2^\ell) \right] }{ \det\left[ \Psi_2^r(x;\eta_2^\ell), \widetilde{\Psi}_2^\ell(x;\eta_2^\ell) \right] }
			+ \mathcal{O}(|z-\eta_2^{\ell}|^{1/2}) = i + \mathcal{O}(|z-\eta_2^{\ell}|^{1/2}), \quad z \to \eta_2^\ell.
		\end{equation*}
		\par 
		Consequently, choosing the continuous branch of the logarithm along the contour, one has
		\begin{equation*}
			\log\left( -\frac{b_{2-}(z)}{b_{2+}(z)} \right) 
			= \log \left( -i( 1 +  \mathcal{O}(|z-\eta_2^{\ell}|^{1/2}) ) \right)
			=-\frac{i\pi}{2} + \mathcal{O}(|z-\eta_2^{\ell}|^{1/2}), \quad z \to \eta_2^\ell. 
		\end{equation*}
		Since $\eta_2^\ell$ is the terminal endpoint of the counterclockwise oriented arc $\Sigma_1^\ell$, the corresponding Cauchy integral satisfies
		\begin{equation*}
			\frac{1}{2\pi i} \int_{\Sigma_1^\ell\setminus\Sigma_1^r} \log\left( -\frac{b_{2-}(s)}{b_{2+}(s)} \right) \delta(s,z)\,ds = -\frac14\log(z-\eta_2^\ell) + 
			C_{\eta_2^\ell} + \mathcal{O}( |z - \eta_2^\ell|^{1/2}),
		\end{equation*}
		where $ C_{\eta_2^\ell} $ is a constant associated with $\eta_2^\ell$.
		It follows that
		\begin{equation*}
			\gamma(z) = (z-\eta_2^\ell)^{-1/4} \left(  e^{C_{\eta_2^\ell}} + \mathcal{O}( |z - \eta_2^\ell|^{1/2} ) \right), \quad  z \to \eta_2^\ell.
		\end{equation*}
		\par 
		We next consider the case in which $\eta_2^\ell \in \Sigma_1^r$. Since the left and right endpoints do not coincide, $\eta_2^\ell$ is locally the terminal endpoint of $\Sigma_1^\ell\cap\Sigma_1^r$ and the initial endpoint of $\Sigma_1^r\setminus\Sigma_1^\ell$ (see Figures \ref{fig:cut4}, \ref{fig:cut5}, and \ref{fig:cut6}).
		
		Approaching $\eta_2^\ell$ through $\Sigma_1^r\setminus\Sigma_1^\ell$, the left Jost column is single-valued. Hence by \eqref{eq:h-upper-jump} and \eqref{eq:hhpsi},
		\begin{equation}\label{eq:O1.2}
			\frac{b_{1+}(z)}{b_{1-}(z)} = 
			\frac{ \det\left[  \Psi_{1+}^r(x; \eta_2^\ell ), \widetilde{\Psi}_{1}^\ell(x; \eta_2^\ell  ) \right] }
			{ \det\left[ \Psi_{1-}^r(x; \eta_2^\ell ), \widetilde{\Psi}_{1}^\ell(x; \eta_2^\ell ) \right] }
			+\mathcal{O}(|z - \eta_2^\ell|^{1/2}).
		\end{equation}
		\par 
		Approaching through the overlap, both boundary values of the left Jost column occur. By \eqref{eq:h-upper-jump} and \eqref{eq:hhpsi},
		\begin{equation}\label{eq:b1-overlap-limit}
			\frac{b_{1+}(z)}{b_{1-}(z)}  
			= -i \frac{ \det\left[ \Psi_{1+}^r(x;\eta_2^\ell), \widetilde{\Psi}_{1}^\ell(x;\eta_2^\ell) \right] }{ \det\left[ \Psi_{1-}^r(x;\eta_2^\ell), \widetilde{\Psi}_{1}^\ell(x;\eta_2^\ell) \right] } +\mathcal{O}(|z - \eta_2^\ell|^{1/2}). 
		\end{equation}
		Thus the two limiting densities differ by the constant phase $-i$. Consequently, the piecewise-defined function
		\begin{equation*}
			\mathfrak{g}(z) = \begin{cases}
				\displaystyle \log\left( -\frac{b_{1+}(z)}{b_{1-}(z)} \right), & z\in\Sigma_1^r\cap\Sigma_1^\ell, \\ 
				\displaystyle  -\frac{i\pi}{2} + \log\left( -\frac{b_{1+}(z)}{b_{1-}(z)} \right) , & z\in\Sigma_1^r\setminus\Sigma_1^\ell, 
			\end{cases}
		\end{equation*}
		is continuous at $\eta_2^\ell$. The singular terms in $\log\gamma(z)$ can therefore be rewritten, up to a function analytic near $\eta_2^\ell$, as
		\begin{equation*}
			\begin{aligned}
				& \frac{1}{2\pi i} \int_{\Sigma_1^r\cap\Sigma_1^\ell} \log\left( -\frac{b_{1+}(s)}{b_{1-}(s)} \right) \delta(s,z)\,ds 
				+ \frac{1}{2\pi i} \int_{\Sigma_1^r\setminus\Sigma_1^\ell} \log\left( -\frac{b_{1+}(s)}{b_{1-}(s)} \right) \delta(s,z)\,ds \\ 
				=& \frac{1}{2\pi i} \int_{\Sigma_1^r} \mathfrak{g}(s)\delta(s,z)\,ds + \frac14 \log\left( \frac{z-\eta_2^r}{z-\eta_2^\ell} \right) + C_{\eta_2^\ell} + \mathcal{O}(|z-\eta_2^\ell|^{1/2}). 
			\end{aligned}
		\end{equation*}
		It follows from \eqref{eq:O1.2} and \eqref{eq:b1-overlap-limit} that $\mathfrak{g}$ is H\"older continuous of order $1/2$ at $\eta_2^\ell$. Thus the Cauchy integral containing $\mathfrak{g}$ has bounded non-tangential limits at $\eta_2^\ell$, whereas the explicit logarithm contributes $-\frac14\log(z-\eta_2^\ell)$.
		Therefore we also obtain the first expression in \eqref{eq:endpoint-gamma}.
		\par 
		Now, from the jump conditions for $\gamma(z)$, one can observe that the jump matrix of $m(z;x)$ already has the structure of the matrix in \eqref{jump2}. It remains to determine the explicit expressions for $r_1(z)$ and $r_2(z)$, thereby showing that the jump matrix of $m(z;x)$ is precisely the one given in \eqref{jump2}.
		\par 
		We begin by considering the jump matrix $V^{ m}$ on the overlap region $z\in\Sigma_1^r\cap\Sigma_1^\ell$. It can be seen that $ 1 + V^{ m}_{11} = \frac{ 2 }{ 1 + r_1(z) r_2(z) } $. Thus $  V^{ m}_{21} = - i r_1(z) ( 1 +  V^{ m}_{11} ) e^{i f(z)}  $, where $ f(z) $ is defined by \eqref{eq:f}. Therefore, combining this with \eqref{eq:V^m}, we obtain
		\begin{align}
			r_1(z)
			&=
			\frac{a_{1-}(z)}{a_{2-}(z)}
			\frac{
				e^{- i x_0^r (z - z^{-1} )}
			}{
				a_{2+}(z)a_{1-}(z)
				- i b_{2-}(z) },
			\qquad
			z\in\Sigma_1^\ell,
			\\[6pt]
			r_2(z)
			&=
			\frac{a_{2-}(z)}{a_{1-}(z)}
			\frac{
				e^{  i x_0^r (z - z^{-1} )}
			}{
				a_{2-}(z)a_{1+}(z)
				+ i b_{1-}(z) },
			\qquad
			z\in\Sigma_1^r,
		\end{align}
		where the second expression can be obtained similarly.
		\par 
		For $ z \in \mathbb{R} $, by comparing $ V^{ m}_{21} $ with \eqref{eq:VmR}, we obtain 
		\begin{equation*}
			\rho(z) = \frac{ b(z) }{ a_2(z) a_1^*(z)  } e^{-i x_0^r (z - z^{-1})}, \quad z \in \mathbb{R}.
		\end{equation*}
	\end{proof}
	\par 
	We now summarize the properties of the newly introduced scattering coefficients $a_1(z)$, $a_2(z)$, $r_1(z)$, $r_2(z)$, and $\rho(z)$.

	\begin{pro}[Symmetrized scattering coefficients]
		\label{prop:new-scattering-coefficients}
		Assume that Assumption~\ref{ass:no-discrete-spectrum} holds and that
		$u(x)-u_0^\ell(x)\in L^{1,2}(\mathbb R^-)$,
		and
		$u(x)-u_0^r(x)\in L^{1,2}(\mathbb R^+)$.
		Then the following properties hold.
		
		\begin{enumerate}
			\item The scattering coefficient $a(z)$ admits the factorization $a(z)=a_1(z)a_2(z)$ defined by \eqref{eq:defa1a2}. After removing the corresponding removable jumps, $a_1(z)$ and $a_2(z)$ are analytic and nonvanishing in $\mathbb C^+\setminus\Sigma_1^r$ and $\mathbb C^+\setminus\Sigma_1^\ell$ with property \eqref{proaa}, respectively. On the common spectral band,
			\begin{equation}\label{a1a2-tra-b1b2}
				\frac{a_{2+}(z)a_{1-}(z)}
				{a_{2-}(z)a_{1+}(z)}
				=
				-\frac{b_{2-}(z)}{b_{1-}(z)},
				\qquad
				z\in\Sigma_1^\ell\cap\Sigma_1^r.
			\end{equation}
			Moreover, they satisfy the symmetry
			\begin{equation}\label{eq:a1a2-sym}
				a_j(z)=a_j^*(z^{-1}),
				\qquad
				j=1,2.
			\end{equation}
			If, in addition $u(x)-u_0^\ell(x)\in W^{1,1}(\mathbb R^-)$ and $u(x)-u_0^r(x)\in W^{1,1}(\mathbb R^+)$, then
			\begin{equation*}
				\begin{aligned}
					&a_1(z)
					=
					\gamma(\infty)^{-1/2}
					+\mathcal O(z^{-1}),
					&&a_2(z) e^{-\frac{i}{2}(x_0^\ell - x_0^r)(z - z^{-1})}
					=
					\gamma(\infty)^{1/2}
					+\mathcal O(z^{-1}),
					&&&z\to\infty.  \\
					&a_1(z)
					=
					\gamma(\infty)^{1/2}
					+\mathcal O(z),
					&&a_2(z) e^{-\frac{i}{2}(x_0^\ell - x_0^r)(z - z^{-1})}
					=
					\gamma(\infty)^{-1/2}
					+\mathcal O(z),
					&&&z\to 0.
				\end{aligned}
			\end{equation*}
			\par 
			At the endpoints of $\Sigma_1^r\cup \Sigma_1^\ell$, the behavior is given by
			\begin{equation*}
				\begin{aligned}
					&a_1(z)=\mathcal O\left(|z-\zeta|^{-1/4}\right),
					\qquad
					a_2(z)=\mathcal O(1),
					&& z\to\zeta,\quad \zeta\in\partial\Sigma_1^r,\\
					& a_2(z)=\mathcal O\left(|z-\zeta|^{-1/4}\right),
					\qquad
					a_1(z)=\mathcal O(1),
					&& z\to\zeta,\quad \zeta\in\partial\Sigma_1^\ell.
				\end{aligned}
			\end{equation*}
			The corresponding behavior at the lower-half-plane endpoints follows from Schwarz symmetry.

			\item If, in addition, we assume that the denominators appearing in the definitions of $r_1(z)$ and $r_2(z)$ are nondegenerate at the corresponding endpoints, namely,
			\begin{equation*}
				\begin{aligned}
					\lim_{\substack{z\to\eta_j^\ell\\ z\in\Sigma_1^\ell}}
					(z-\eta_j^\ell)^{1/4}
					\left(
					a_{2+}(z)a_{1-}(z)-ib_{2-}(z)
					\right)
					&\neq0,
					\qquad j=1,2,\\
					\lim_{\substack{z\to\eta_j^r\\ z\in\Sigma_1^r}}
					(z-\eta_j^r)^{1/4}
					\left(
					a_{2-}(z)a_{1+}(z)+ib_{1-}(z)
					\right)
					&\neq0,
					\qquad j=1,2.
				\end{aligned}
			\end{equation*}
			Then the functions $r_1(z)$ and $r_2(z)$ have endpoint behavior
			\begin{equation*}
				\begin{aligned}
					r_1(z)
					&=
					\kappa_j^{\ell} (z-\eta_j^\ell)^{1/2}\bigl(1+o(1)\bigr),
					&&z\to\eta_j^\ell,\quad \kappa_j^{\ell}\neq0,\\
					r_2(z)
					&=
					\kappa_j^r (z-\eta_j^r)^{1/2}\bigl(1+o(1)\bigr),
					&&z\to\eta_j^r,\quad \kappa_j^r\neq0,
				\end{aligned}
				\qquad j=1,2,
			\end{equation*}
			Furthermore,
			\begin{equation*}
				r_1(z)>0,
				\quad
				z\in
				\Sigma_1^\ell
				\setminus
				\{\eta_1^\ell,\eta_2^\ell\},
				\qquad
				r_2(z)>0,
				\quad
				z\in
				\Sigma_1^r
				\setminus
				\{\eta_1^r,\eta_2^r\}.
			\end{equation*}
			On the nonoverlapping parts of the spectral bands, the expressions for $r_1(z)$ and $r_2(z)$ reduce to
			\begin{equation}\label{eq:r1r2setm}
				r_1(z)
				=
				\frac{
					e^{-ix_0^r(z-z^{-1})}
				}{
					2a_{2+}(z)a_{2-}(z)
				},
				\quad 
				z\in
				\Sigma_1^\ell\setminus\Sigma_1^r,
				\qquad
				r_2(z)
				=
				\frac{
					e^{ix_0^r(z-z^{-1})}
				}{
					2a_{1+}(z)a_{1-}(z)
				},
				\quad 
				z\in
				\Sigma_1^r\setminus\Sigma_1^\ell.
			\end{equation}
			
			\item The real-axis reflection coefficient satisfies the symmetry
			\begin{equation}\label{eq:rho-sym}
				\rho(z^{-1})
				=
				\overline{\rho(z)},
				\qquad
				z\in\mathbb R\setminus\{0\},
			\end{equation}
			and its values at $z=\pm1$ are
			$
			\rho(1)=-1
			$ and  $
			\rho(-1)=1.
			$
			If, in addition,
			\begin{equation*}
				u(x)\in W_{\mathrm{loc}}^{4,1}(\mathbb R),
				\qquad 
				u(x)-u_0^\ell(x)\in W^{4,1}(\mathbb R^-),
				\qquad
				u(x)-u_0^r(x)\in W^{4,1}(\mathbb R^+),
			\end{equation*}
			then
			\begin{equation}\label{eq:rho-aysm}
				\rho(z)
				=
				\mathcal O(z^{-4}),
				\quad
				z\to\pm\infty,
				\qquad
				\rho(z)
				=
				\mathcal O(z^4),
				\quad
				z\to0,
				\quad
				z\in\mathbb R.
			\end{equation}
			Consequently,
			\begin{equation*}
				r_1\in L^2(\Sigma_1^\ell),
				\qquad
				r_2\in L^2(\Sigma_1^r),
				\qquad
				\rho\in
				L^{2,2}(\mathbb R)
				\cap
				L^{1,2}(\mathbb R).
			\end{equation*}
		\end{enumerate}
	\end{pro}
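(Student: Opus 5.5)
The proof goes through the three parts in order, and almost everything is read off the jump relations of $\gamma$ in Proposition~\ref{prop:gamma-properties}, the scattering relations of Proposition~\ref{pro3.6}, and the endpoint and asymptotic information in Propositions~\ref{prop:scattering-endpoint-behavior} and~\ref{prop:gamma-properties}.

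\emph{Part 1.} The factorization $a=a_1a_2$ holds by \eqref{eq:defa1a2}. The square roots are single-valued because, under Assumption~\ref{ass:no-discrete-spectrum}, $a$ has no zeros in $\mathbb C^+$, and $\gamma$ is nonvanishing since it is defined as an exponential. The removability \eqref{proaa} is the computation already set up in \eqref{eq:gamju1}. For instance, on $\Sigma_1^\ell\setminus\Sigma_1^r$ one has
\[
\frac{a_{1+}^2}{a_{1-}^2}=\frac{a_+}{a_-}\,\frac{\gamma_-}{\gamma_+}
=\Bigl(-\frac{b_{2-}}{b_{2+}}\Bigr)\Bigl(-\frac{b_{2+}}{b_{2-}}\Bigr)=1,
\]
by \eqref{eq:abrel}. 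The sign of the square root is then fixed by continuity from a point where no jump occurs. The case of $a_2$ on $\Sigma_1^r\setminus\Sigma_1^\ell$ is identical. Identity \eqref{a1a2-tra-b1b2} is \eqref{eq:gamju2} rewritten. The symmetry \eqref{eq:a1a2-sym} follows from \eqref{eq:absym}, \eqref{eq:gammsy1}, and the invariance of $z-z^{-1}$ under $z\mapsto \bar z^{-1}$ combined with conjugation. The asymptotics at $\infty$ and $0$ combine \eqref{eq:a-infinity-zero-asymptotics} with \eqref{eq:gamma-asym}. The endpoint bounds follow from \eqref{eq:a-endpoint-behavior} and \eqref{eq:endpoint-gamma}. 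At $\eta_j^r$, $\gamma\sim(z-\zeta)^{1/4}$, so $a/\gamma$ carries at most $|z-\zeta|^{-1/2}$ and $a_1=\mathcal O(|z-\zeta|^{-1/4})$. For $a_2$ at the same point, $a\gamma$ stays bounded, and this is exactly where the fourth-root singularity of $a$ must be cancelled by the zero of $\gamma$. To make that sharp I would use the regularized limits \eqref{eq:hhpsi}, which show that $a$ behaves like $c\,(z-\zeta)^{-1/4}$ with a single leading power. The points $\eta_j^\ell$ are handled symmetrically.

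\emph{Part 2.} On the nonoverlapping part $\Sigma_1^\ell\setminus\Sigma_1^r$, Proposition~\ref{pro3.6} gives $-ib_{2-}=a_+=a_{1}a_{2+}$, where $a_1$ has no jump there. The denominator of $r_1$ becomes $2a_1a_{2+}$, and \eqref{eq:r1r2setm} follows. The formula for $r_2$ is obtained the same way. For positivity, use $a_{2\pm}=\overline{a_{2\mp}}$, which comes from the symmetry and the fact that $\bar z=z^{-1}$ on $\mathbb T$. This gives $a_{2+}a_{2-}=|a_{2+}|^2$ up to the phase $e^{ix_0^r(z-z^{-1})}$, and since $z-z^{-1}\in i\mathbb R$ on $\mathbb T$ that phase is real and positive. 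On the overlap I would show that the same expression is real by the symmetry $b_{2\pm}=\overline{b_{2\mp}}$ together with \eqref{b2-b1-}, and positive by a continuity argument: $r_1$ is real and nonvanishing along the arc and positive on the nonoverlapping piece. The nondegeneracy hypothesis guarantees that no sign change can occur. The square-root endpoint behavior comes from this hypothesis together with the Part~1 endpoint orders. At $\eta_j^\ell$, $a_2\sim(z-\zeta)^{-1/4}$ and $a_1=\mathcal O(1)$ give a denominator of order $(z-\zeta)^{-1/2}$, hence $r_1\sim\kappa(z-\zeta)^{1/2}$.

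\emph{Part 3.} The symmetry \eqref{eq:rho-sym} follows from \eqref{eq:absym} and \eqref{eq:a1a2-sym}. The values at $z=\pm1$ follow from the simple poles of $a$ and $b$ at $\pm1$ (Theorem~\ref{thm:direct-scattering}). Since $\det S=1$ forces $|a|=|b|$ at leading order there, $\rho(\pm1)$ has modulus one. The sign is read off from the residues, using $b(z)=b^*(z^{-1})$, the identity $1-z^{-2}$ in the denominators, and $\Psi^s(\pm1)$ being degenerate through \eqref{eq:invsym}. The decay \eqref{eq:rho-aysm} follows from \eqref{eq:b-infinity-zero-asymptotics} and the Part~1 limits, since $a_2a_1^*$ tends to a unimodular constant times the same exponential. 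Combined with local boundedness, this yields $\rho\in L^{2,2}\cap L^{1,2}$, and $r_1,r_2\in L^2$ follow from their boundedness.

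I expect the main obstacle to be positivity on the overlapping band. There the denominator $a_{2+}a_{1-}-ib_{2-}$ does not simplify, and realness has to be extracted from a careful combination of \eqref{b2-b1-}, the boundary symmetries, and \eqref{a1a2-tra-b1b2}.
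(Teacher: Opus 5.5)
Your treatment of Part 1, of the nonoverlapping formulas \eqref{eq:r1r2setm} and their positivity, and of Part 3 matches the paper. The gap is positivity of $r_1$ and $r_2$ on $\Sigma_1^\ell\cap\Sigma_1^r$. You plan to get it from realness plus a continuity argument anchored on the nonoverlapping piece. Write
\[
r_1=\frac{e^{-ix_0^r(z-z^{-1})}}{a_{2+}a_{2-}}\,\frac{X}{X-1},\qquad X:=\frac{a_{2+}(z)a_{1-}(z)}{ib_{2-}(z)},
\]
where the first factor is positive. Realness of $X$ does follow from the ingredients you list: $\overline{X}=a_{2-}a_{1+}/(-ib_{1-})=X$ by \eqref{b2-b1-} and \eqref{a1a2-tra-b1b2}. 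But realness only says that $r_1$ has the sign of $X/(X-1)$, and that sign is negative when $0<X<1$. Your continuity argument cannot exclude this, for three reasons.
\begin{itemize}
\item In the nested configuration $\Sigma_1^\ell\subset\Sigma_1^r$, one of the six cases in Figure~\ref{fig:six-cases}, the set $\Sigma_1^\ell\setminus\Sigma_1^r$ is empty. There is then no anchor at all, and the same holds for $r_2$ when $\Sigma_1^r\subset\Sigma_1^\ell$.
\item In the partially overlapping cases you must cross an endpoint $\eta_j^r$ lying inside $\Sigma_1^\ell$. There $a_1$, $b_1$, $b_2$ carry fourth-root singularities, and $X\equiv-1$ on the nonoverlapping side. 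Your plan gives no control of $X$ as $z\to\eta_j^r$ from the overlap side, and the nondegeneracy hypothesis concerns only the points $\eta_j^\ell$.
\item Even on the open overlap arc, $r_1$ can change sign without vanishing. It does so by passing through a pole where $X=1$, that is, a zero of $a_{2+}a_{1-}-ib_{2-}$, and nothing in your plan rules this out.
\end{itemize}

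The missing ingredient is unimodularity of the scattering matrix on the overlap. By \eqref{b2-b1-}, $a_-^*=a_+$ and $b_{1-}=b_{2+}=\overline{b_{2-}}$, so $\det S_-=1$ reads $a_+a_-=1+|b_{1-}|^2$. Combining this with \eqref{a1a2-tra-b1b2} gives $X^2=a_+a_-/(b_{1-}b_{2-})=1+|b_{1-}|^{-2}>1$. Hence $X$ is real with $|X|>1$, and $X/(X-1)>0$ holds pointwise, with no continuity or anchor needed. This is exactly the paper's argument, and the same $X$ serves for $r_2$. A smaller point: the $\mathcal O$-bounds from Part 1 do not give $\kappa_j^\ell\neq0$. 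For that, use $a_{1-}/a_{2-}=e^{-\frac{i}{2}(x_0^\ell-x_0^r)(z-z^{-1})}/\gamma_-(z)$ together with the nonzero leading coefficient $e^{C_\zeta}$ in \eqref{eq:endpoint-gamma}.
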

	\begin{proof}
		We prove the three statements separately.
		
		The relation \eqref{a1a2-tra-b1b2} has already been obtained in \eqref{eq:gamju2} during the construction of $\gamma(z)$. The symmetry properties of $a_1(z)$ and $a_2(z)$ follow directly from the symmetry of $a(z)$ in \eqref{eq:absym}, their definitions in \eqref{eq:defa1a2}, and the symmetry of $\gamma(z)$ in \eqref{eq:gammsy1}. Their asymptotic behavior follows by substituting the asymptotic expansions of $a(z)$ and $\gamma(z)$ given in \eqref{eq:a-infinity-zero-asymptotics} and \eqref{eq:gamma-asym}, respectively, into \eqref{eq:defa1a2}. Finally, their endpoint behavior follows directly from the endpoint estimates for $a(z)$ in \eqref{eq:a-endpoint-behavior} and the endpoint behavior of $\gamma(z)$ in \eqref{eq:endpoint-gamma}.
		\par 
		We next prove the assertions concerning $r_1$ and $r_2$. We only prove the assertion for $r_1$ near $\eta_j^\ell$, since the proof for $r_2$ near $\eta_j^r$ is analogous. By the definitions of $a_1$ and $a_2$, we have
		\begin{equation*}
			\frac{a_{1-}(z)}{a_{2-}(z)}
			=
			\frac{e^{-\frac{i}{2}(x_0^\ell-x_0^r)(z-z^{-1})}}{\gamma_-(z)}
			=
			(z-\eta_j^\ell)^{1/4}
			\frac{ e^{-\frac{i}{2}(x_0^\ell-x_0^r)(z-z^{-1})}
			}{
				e^{C_{\eta_j^\ell}} + \mathcal{O}(|z - \eta_j^\ell|^{1/2})
			}.
		\end{equation*}
		\par 
		Set
		\begin{equation*}
			d_j^\ell
			=
			\lim_{\substack{z\to\eta_j^\ell\\ z\in\Sigma_1^\ell}}
			(z-\eta_j^\ell)^{1/4}
			\left(
			a_{2+}(z)a_{1-}(z)-ib_{2-}(z)
			\right), \quad 
			d_j^r
			=
			\lim_{\substack{z\to\eta_j^r\\ z\in\Sigma_1^r}}
			(z-\eta_j^r)^{1/4}
			\left(
			a_{2-}(z)a_{1+}(z)+ib_{1-}(z)
			\right)
		\end{equation*}
		By assumption, $d_j^\ell\neq0$, and hence
		\begin{equation*}
			r_1(z)
			=
			\frac{a_{1-}(z)}{a_{2-}(z)}
			\frac{
				e^{-ix_0^r(z-z^{-1})}
			}{
				a_{2+}(z)a_{1-}(z)-ib_{2-}(z)
			}
			=
			\kappa_j^{\ell}
			(z-\eta_j^\ell)^{1/2}
			\left(
			1+o(1)
			\right),
			\qquad
			z\to\eta_j^\ell,
		\end{equation*}
		where
		$
		\kappa_j^{\ell}
		=
		\frac{ \exp\left(-\frac{i}{2}(x_0^\ell+x_0^r)
			(\eta_j^\ell-(\eta_j^\ell)^{-1}) - C_{\eta_j^\ell} \right)
		}{
			d_j^\ell
		}
		\neq0.
		$
		\par 
		Similarly, we have the endpoint behavior of $r_2(z)$ with
		$
		\kappa_j^r
		=
		\frac{
			\exp\left(C_{\eta_j^r}  +  \frac{i}{2}(x_0^\ell+x_0^r)
			(\eta_j^r-(\eta_j^r)^{-1}) \right)
		}{
			d_j^r
		}
		\neq0.
		$
		\par 
		Next we only prove positivity of $r_1(z)$ and $r_2(z)$ on the overlap band. The remaining non-overlapping cases are covered by the same argument. For $z\in\Sigma_1^\ell\cap\Sigma_1^r$, we write
		\begin{equation*}
			r_1(z)
			=
			\frac{e^{-ix_0^r(z-z^{-1})}}
			{a_{2+}(z)a_{2-}(z)}
			\frac{a_{2+}(z)a_{1-}(z)}
			{a_{2+}(z)a_{1-}(z)-ib_{2-}(z)}.
		\end{equation*}
		Since $z^{-1}=\overline z$ on the unit circle and
		$a_{2+}(z)=\overline{a_{2-}(z)}$, we have
		$
		\frac{e^{-ix_0^r(z-z^{-1})}}
		{a_{2+}(z)a_{2-}(z)}
		=
		\frac{e^{-ix_0^r(z-\overline z)}}
		{|a_{2-}(z)|^2}
		>0.
		$
		On the other hand, using the definitions of $a_1$ and $a_2$, the jump relation for $\gamma$, and the scattering identities on the common spectral band, we obtain
		\begin{equation*}
			\left(
			\frac{a_{2+}(z)a_{1-}(z)}
			{ib_{2-}(z)}
			\right)^2
			=
			\frac{a_+(z)a_-(z)}
			{b_{1-}(z)b_{2-}(z)}
			=
			\frac{1+b_{1-}(z)b_{2-}(z)}
			{b_{1-}(z)b_{2-}(z)}
			=
			1+\frac{1}{|b_{1-}(z)|^2}>1.
		\end{equation*}
		Therefore, $\frac{a_{2+}(z)a_{1-}(z)}
		{ib_{2-}(z)}
		\in(-\infty,-1)\cup(1,\infty)$,
		and hence
		\begin{equation*}
			\frac{a_{2+}(z)a_{1-}(z)}
			{a_{2+}(z)a_{1-}(z)-ib_{2-}(z)}
			=
			\frac{
				\dfrac{a_{2+}(z)a_{1-}(z)}{ib_{2-}(z)}
			}{
				\dfrac{a_{2+}(z)a_{1-}(z)}{ib_{2-}(z)}-1
			}
			>0.
		\end{equation*}
		Consequently,
		$
		r_1(z)>0$,
		$	z\in\Sigma_1^\ell\cap\Sigma_1^r$.
		The same calculation gives $r_2(z)>0$ on the overlap band.
		\par 
		For $z\in\Sigma_1^\ell\setminus\Sigma_1^r$, the transformation relation \eqref{eq:a-b2-transformation}, the identity $a=a_1a_2$, and the removable jump of $a_1$ give
		\begin{equation*}
			a_{2+}(z)a_{1-}(z)-ib_{2-}(z)
			=
			2a_{2+}(z)a_{1-}(z),
		\end{equation*}
		which leads to the first expression in \eqref{eq:r1r2setm}, while the second one follows similarly from \eqref{eq:a-b2-transformation}.
		\par 
		The symmetry of $\rho$ in \eqref{eq:rho-sym} follows directly from the symmetries of $a$ and $b$ in \eqref{eq:absym}. The asymptotic behavior of $\rho$ in \eqref{eq:rho-aysm} follows from the asymptotic expansions of $a$ and $b$ in \eqref{eq:a-infinity-zero-asymptotics} and \eqref{eq:b-infinity-zero-asymptotics}, respectively.
		\par 
		Finally, let $z_0\in\{1,-1\}$. Since the Jost solutions are continuous at $z=z_0$, their symmetry \eqref{eq:sym} gives
		$
		\Psi^s(x;z_0)
		=
		z_0\Psi^s(x;z_0)\sigma_1.
		$
		Using the determinant definitions in \eqref{eq:defab}, we obtain
		\begin{equation*}
			\lim_{\substack{z\to z_0\\ z\in\mathbb R}}
			\frac{b(z)}{a(z)}
			=
			\frac{
				\det\left[
				\Psi_1^r(x;z_0),
				\Psi_1^\ell(x;z_0)
				\right]
			}{
				\det\left[
				\Psi_1^\ell(x;z_0),
				\Psi_2^r(x;z_0)
				\right]
			}
			=
			\frac{
				-\det\left[
				\Psi_1^\ell(x;z_0),
				\Psi_1^r(x;z_0)
				\right]
			}{
				z_0
				\det\left[
				\Psi_1^\ell(x;z_0),
				\Psi_1^r(x;z_0)
				\right]
			}
			=
			-z_0.
		\end{equation*}
		Therefore, by using symmetry \eqref{eq:a1a2-sym},
		$
		\rho(z_0) = \frac{b(z_0)}{a(z_0)}
		\frac{a_1(z_0)}{a_1^*(z_0)}
		e^{-ix_0^r(z_0-z_0^{-1})}
		=-z_0.
		$
	\end{proof}
	
	\begin{rmk}
		The positivity of $r_1$ and $r_2$, and hence their reality on the upper-half-plane spectral bands, ensures that the band jump matrices are compatible with the symmetry
		$
		m(z)=z^{-1}m(z^{-1})\sigma_1,
		$
		once the lower-band data are defined by Schwarz conjugation. In addition, $r_1,r_2>0$ implies $1+r_1r_2>0$ on the overlap of the two bands, so that the corresponding jump matrix is nonsingular. The endpoint relations $\rho(1)=-1$ and $\rho(-1)=1$, and hence $|\rho(\pm1)|=1$, are also consistent with the generic finite-density scattering behavior of the defocusing NLS equation. Indeed, in the long-time analysis, the stationary points reach the spectral endpoints $z=\pm1$ along the transition lines $x/t=\pm2$, and the unit-modulus values of the reflection coefficient constitute the characteristic critical behavior near these regions; see, for example, \cite{Jenkins2015,WangFan2023,FanLiYangZhang2026}.
	\end{rmk}
	
	\section{Time evolution and reconstruction}\label{sec:time-evolution}
	
	In the preceding sections, the direct scattering data $r_1$, $r_2$, and $\rho$ were constructed from the initial potential at $t=0$. We now regard them as the initial scattering data and incorporate their time evolution into the oscillatory factors of the jump matrices through the phase $f(z;x,t)=x(z-z^{-1})-t(z^2-z^{-2})$,
	which is determined by the time part of the Lax pair. Thus, RHP~\ref{RHP2} describes the evolution of the initial scattering data for arbitrary $t$. In this section, we prove Theorem~\ref{thm:inverse-problem} by establishing the unique solvability of RHP~\ref{RHP2}, verifying the regularity of the reconstructed potential, and showing that it satisfies the dNLS equation.
	\begin{proof}
		We prove the solvability of RHP~\ref{RHP2} by the standard singular-integral argument \cite{Zhou1989RHP,GravaJenkinsZhangZhang2026,Zhu2026}.
		\par 
		Fix $(x,t)$ and set
		\begin{equation*}
			\Gamma
			:=
			\mathbb R\cup\Sigma^\ell\cup\Sigma^r,
			\qquad
			V(z):=V^m(z;x,t),
			\qquad
			w(z):=V(z)-I.
		\end{equation*}
		To take account of the prescribed singularity at the origin, define
		\begin{equation*}
			m^{(0)}(z)
			:=
			I+\frac{\sigma_1}{z},
			\qquad
			n(z)
			:=
			m(z)-m^{(0)}(z).
		\end{equation*}
		Then $n$ is analytic in $\mathbb C\setminus\Gamma$ and satisfies
		$n(z)=\mathcal O(z^{-1})$ as $z\to\infty$. Let
		\begin{equation*}
			(C_\Gamma h)(z)
			:=
			\frac{1}{2\pi\mathrm i}
			\int_\Gamma
			\frac{h(s)}{s-z}\,ds,
			\qquad
			z\in\mathbb C\setminus\Gamma,
		\end{equation*}
		be the Cauchy transform on the oriented contour $\Gamma$, and denote by $C_+h$ and $C_-h$ its non-tangential boundary values from the positive and negative  sides of $\Gamma$, respectively. These operators satisfy the Sokhotski--Plemelj relation $C_+-C_-=I$ on $L^2(\Gamma)$. Set $\nu:=n_-\in L^2(\Gamma)$, where $n_-$ is the boundary value of $n$ from the negative side of $\Gamma$. The jump relation $m_+=m_-V$ is equivalent to
		\begin{equation*}
			n_+-n_-
			=
			\bigl(m^{(0)}+n_-\bigr)w
			=
			\bigl(m^{(0)}+\nu\bigr)w.
		\end{equation*}
		Therefore, by the Sokhotski--Plemelj formula and the normalization of $n$ at infinity,
		\begin{equation*}
			n(z)
			=
			C_\Gamma
			\left[
			\bigl(m^{(0)}+\nu\bigr)w
			\right](z).
		\end{equation*}
		Taking the negative boundary value gives
		\begin{equation*}
			(I-\mathcal C_w)\nu
			=
			C_-[m^{(0)}w],
			\qquad
			\mathcal C_wh
			:=
			C_-[hw].
		\end{equation*}
		Here $\mathcal C_w:L^2(\Gamma)\to L^2(\Gamma)$ is bounded because $C_-$ is bounded on $L^2(\Gamma)$ and $w\in L^\infty(\Gamma)$.
		\par 
		It remains to prove that the associated homogeneous problem has only the zero solution. If $(I-\mathcal C_w)\nu=0$, then $N(z):=C_\Gamma[\nu w](z)$ solves the homogeneous RHP \ref{Homogeneous RHP} and satisfies $N(z)=\mathcal O(z^{-1})$, $z\to\infty$.
		\begin{RHP}\label{Homogeneous RHP}
			Find a $2\times2$ matrix-valued function $N(z)=N(z;x,t)$ with the
			following properties:
			\begin{enumerate}
				\item $N$ is analytic in $\mathbb C\setminus\Gamma$.
				
				\item The boundary values satisfy $N_+(z)=N_-(z)V(z)$, $z\in\Gamma$.
				
				\item As $z\to\infty$, $N(z)=\mathcal O(z^{-1})$.
			\end{enumerate}
		\end{RHP}
		\par 
		On the upper spectral bands, the symmetries of the scattering data and the positivity of $r_1$ and $r_2$ give $V(\overline z)=V(z)^\dagger$, where the superscript $\dagger$ denotes the Hermitian transpose.
		On the real axis,
		\begin{equation*}
			\frac{V(z)+V(z)^\dagger}{2}
			=
			\begin{pmatrix}
				1-|\rho(z)|^2&0\\
				0&1
			\end{pmatrix}.
		\end{equation*}
		The scattering relation gives $1-|\rho(z)|^2>0$ for almost every $z\in\mathbb R$. The equality $|\rho(\pm1)|=1$ occurs only at two isolated points and does not affect the $L^2$ argument. Zhou's vanishing lemma \cite{Zhou1989RHP} therefore implies $N\equiv0$. Consequently, RHP~\ref{RHP2} has a unique solution, given by
		\begin{equation}\label{eq:short-RHP-solution-corrected}
			m(z)
			=
			m^{(0)}(z)
			+
			C_\Gamma
			\left[
			\bigl(\nu+m^{(0)}\bigr)w
			\right](z)
			=
			I+\frac{\sigma_1}{z}
			+
			\frac{1}{2\pi  i}
			\int_\Gamma
			\frac{
				\left(
				\nu(s)+I+\frac{\sigma_1}{s}
				\right)w(s)
			}{
				s-z
			}
			\,ds,
			\qquad
			z\in\mathbb C\setminus\Gamma.
		\end{equation}
		\par 
		Finally, we verify the regularity of the reconstructed potential.
		Differentiating the singular integral equation
		\begin{equation*}
			(I-\mathcal C_w)\nu
			=
			C_-[m^{(0)}w]
		\end{equation*}
		with respect to $x$ and $t$, and using the boundedness of $(I-\mathcal C_w)^{-1}$ on $L^2(\Gamma)$, together with the assumptions on $\rho(z)$ and its symmetry, we obtain
		$(s-s^{-1})\rho$,
		$(s-s^{-1})^2\rho$,
		$(s^2-s^{-2})\rho
		\in L^2(\mathbb R)$,
		and
		$(s-s^{-1})^2\rho$,
		$(s^2-s^{-2})\rho
		\in L^1(\mathbb R)$.
		It follows that
		$\partial_x\nu$,
		$\partial_x^2\nu$,
		$\partial_t\nu
		\in L^2(\Gamma)$.
		\par 
		Expanding \eqref{eq:short-RHP-solution-corrected} as $z\to\infty$
		gives
		\begin{equation*}
			u(x,t)
			=
			\gamma(\infty)
			-
			\frac{\gamma(\infty)}{2\pi i}
			\int_\Gamma
			\left[
			\left(
			\nu(s)+I+\frac{\sigma_1}{s}
			\right)w(s)
			\right]_{21}
			ds.
		\end{equation*}
		We split this expression as
		\begin{equation*}
			u(x,t)= \gamma(\infty) + u_{\mathbb R}(x,t)+u_\Gamma(x,t),
		\end{equation*}
		where $u_{\mathbb R}$ denotes the integral over $\mathbb R$ and $u_\Gamma$ contains the integrals over $\Gamma\setminus\mathbb R$.
		
		On the real axis,
		\begin{equation*}
			\left[
			\left(
			\nu+I+\frac{\sigma_1}{s}
			\right)w
			\right]_{21}
			=
			-
			\left(
			\nu_{21}+\frac{1}{s}
			\right)|\rho|^2
			+
			(\nu_{22}+1)\rho e^{if}.
		\end{equation*}
		Hence
		\begin{equation}\label{eq:u-real-part}
			u_{\mathbb R}(x,t)
			=
			-\frac{\gamma(\infty)}{2\pi i}
			\int_{\mathbb R}
			\left[
			-
			\left(
			\nu_{21}(s;x,t)+\frac{1}{s}
			\right)|\rho(s)|^2
			+
			\bigl(\nu_{22}(s;x,t)+1\bigr)
			\rho(s)e^{if(s;x,t)}
			\right]
			ds.
		\end{equation}
		
		Taking two derivatives of \eqref{eq:u-real-part} with respect to $x$ produces terms involving $\partial_x\nu$, $\partial_x^2\nu$, and the weighted reflection coefficients $(s-s^{-1})\rho$ and $(s-s^{-1})^2\rho$. These terms are absolutely integrable. Therefore $u_{\mathbb R}(\cdot,t)\in C^2(\mathbb R)$. Similarly, $u_{\mathbb R}(x,\cdot)\in C^1(\mathbb R^+)$.
		
		Since the remaining contour consists of finitely many compact spectral bands away from the origin, the boundedness of $r_1$ and $r_2$ yields the same regularity for $u_\Gamma$; hence $u(x,t) \in C^2(\mathbb{R}) \times C^1(\mathbb{R}^+)$, and the standard Lax-pair argument shows that $u$ is a classical solution of the dNLS equation.
	\end{proof}

	\appendix

	\section{Proofs of the properties of the Jost solutions}\label{AppA}
	In this appendix, we provide the proofs of several results concerning the Jost solutions stated in Section~\ref{Secfast}. In particular, we establish their analyticity, symmetry properties, large-$|z|$ asymptotics, and behavior near the branch points.
	
	\subsection{Proof of Proposition \ref{pro:M-large-z}}\label{app:proMl}
	We prove the result for $s=\ell$. The proof for $s=r$ is identical, with the direction of integration reversed.
	
	Set $M_0^\ell(x;z)=I$ and define recursively
	\begin{align*}
		M_{n+1}^\ell(x;z)
		={}&
		\int_{-\infty}^{x}
		e^{-i(x-y)p^\ell(z)\sigma_3}
		Y^\ell(z;y,0)^{-1}
		e^{-i(y-x_0^\ell)p_\infty^\ell\sigma_3}
		\\
		&\quad\times
		i\sigma_3\Delta Q^\ell(y)
		e^{i(y-x_0^\ell)p_\infty^\ell\sigma_3}
		Y^\ell(z;y,0)M_n^\ell(y;z)
		e^{i(x-y)p^\ell(z)\sigma_3}
		\,dy .
	\end{align*}
	For either column in its corresponding domain of analyticity, the exponential factors in the Volterra equation are bounded. Moreover, $Y^\ell$ and $(Y^\ell)^{-1}$ are uniformly bounded for sufficiently large $z$. It follows inductively that there exists a constant $\mathcal{C}>0$, independent of $n$ and of sufficiently large $z$, such that
	\[
	\|M_n^\ell(x;z)\|
	\leq
	\frac{1}{n!}
	\left(
	\mathcal{C}\int_{-\infty}^{x}
	\|\Delta Q^\ell(y)\|\,dy
	\right)^n .
	\]
	Indeed, the factor $1/n!$ follows from the ordered integration
	region $-\infty<y_n<\cdots<y_1<x$. Hence the series $I+\sum_{n=1}^{\infty}M_n^\ell(x;z)$
	converges absolutely and uniformly for sufficiently large $z$.
	Substitution of this series into \eqref{eq:direct_z_Volterra}
	shows that it solves the Volterra equation. By uniqueness,
	$M^\ell(x;z)
	=
	I+\sum_{n=1}^{\infty}M_n^\ell(x;z)$.
	\par 
	We next show that, for every fixed $n\geq1$, $M_n^\ell(x;z) \to 0$, $z\to\infty$.
	Since $Y^\ell(z;y,0)^{\pm1} = I+\mathcal{O}(z^{-1})$, we have
	\begin{align*}
		&Y^\ell(z;y,0)^{-1}
		e^{-i(y-x_0^\ell)p_\infty^\ell\sigma_3}
		i\sigma_3\Delta Q^\ell(y)
		e^{i(y-x_0^\ell)p_\infty^\ell\sigma_3}
		Y^\ell(z;y,0)
		\\
		&\qquad=
		e^{-i(y-x_0^\ell)p_\infty^\ell\sigma_3}
		i\sigma_3\Delta Q^\ell(y)
		e^{i(y-x_0^\ell)p_\infty^\ell\sigma_3}
		+
		\mathcal{O}\left(
		\frac{\|\Delta Q^\ell(y)\|}{|z|}
		\right).
	\end{align*}
	The first matrix on the right-hand side is off-diagonal. Consequently, after expanding the $n$-fold integral, every entry of its leading part is a finite sum of integrals of the form
	\[
	\int_{-\infty<y_n<\cdots<y_1<x}
	F(y_1,\ldots,y_n)
	e^{2ip^\ell(z)
		\left(
		\pm(x-y_1)
		\pm(y_1-y_2)
		\pm\cdots
		\pm(y_{n-1}-y_n)
		\right)}
	\,dy_n\cdots dy_1,
	\]
	where
	$
	|F(y_1,\ldots,y_n)|
	\leq
	C^n\prod_{j=1}^{n}
	\|\Delta Q^\ell(y_j)\|.
	$
	Thus $F$ is integrable on the ordered region. The phase is a nonconstant linear function of $y_1,\ldots,y_n$, and $|p^\ell(z)|\to\infty$. The multidimensional Riemann--Lebesgue lemma therefore implies that the leading part tends to zero. Every remaining term contains at least one $\mathcal{O}(z^{-1})$ factor and also tends to zero by the preceding integrable bound. Hence
	$
	M_n^\ell(x;z)=o(1)
	$
	for every fixed $n\geq1$.
	
	Finally, let $\varepsilon>0$. Choose $N$ sufficiently large that
	\[
	\sum_{n=N+1}^{\infty}
	\frac{1}{n!}
	\left(
	\mathcal{C}\int_{-\infty}^{x}
	\|\Delta Q^\ell(y)\|\,dy
	\right)^n
	<\frac{\varepsilon}{2}.
	\]
	This estimate is independent of $z$. Since each of the finitely many terms $M_1^\ell,\ldots,M_N^\ell$ tends to zero, for sufficiently large $z$,
	$
	\sum_{n=1}^{N}\|M_n^\ell(x;z)\|
	<\frac{\varepsilon}{2}.$ 
	Therefore,
	\[
	\|M^\ell(x;z)-I\|
	\leq
	\sum_{n=1}^{N}\|M_n^\ell(x;z)\|
	+
	\sum_{n=N+1}^{\infty}\|M_n^\ell(x;z)\|
	<\varepsilon.
	\]
	This proves
	$
	M^\ell(x;z)=I+o(1)$,
	$ z\to\infty.
	$
	\par 
	The large-$z$ behavior has already been established. We now prove the symmetry.
	\par 
	For convenience, recall the definition \eqref{def:H}. Then $M^s$ satisfies
	\[
	\partial_xM^s(x;z)
	=
	-ip^s(z)[\sigma_3,M^s(x;z)]
	+
	H^s(x;z)^{-1}i\sigma_3\Delta Q^s(x)
	H^s(x;z)M^s(x;z),
	\]
	together with
	$
	M^s(x;z)\to I$ for
	$x\to\infty^s.
	$
	
	Recall the symmetries corresponding to $p^s(z)$ and $Y^s(z;x,0)$
	\[
	p^s(z^{-1})=-p^s(z),
	\qquad
	Y^s(z;x,0)=z^{-1}Y^s(z^{-1};x,0)\sigma_1.
	\]
	Hence
	$
	H^s(x;z^{-1})
	=
	zH^s(x;z)\sigma_1$,
	and consequently
	\[
	\begin{aligned}
		H^s(x;z^{-1})^{-1}i\sigma_3\Delta Q^s(x)
		H^s(x;z^{-1})
		=
		\sigma_1H^s(x;z)^{-1}i\sigma_3\Delta Q^s(x)
		H^s(x;z)\sigma_1.
	\end{aligned}
	\]
	
	Define
	$
	\widetilde M^s(x;z)
	=
	\sigma_1M^s(x;z^{-1})\sigma_1.
	$
	Then
	\begin{align*}
		\partial_x\widetilde M^s(x;z)
		=
		-ip^s(z)[\sigma_3,\widetilde M^s(x;z)]
		+
		H^s(x;z)^{-1}i\sigma_3\Delta Q^s(x)
		H^s(x;z)\widetilde M^s(x;z).
	\end{align*}
	Moreover,
	$
	\widetilde M^s(x;z)\to I$,
	$ x\to\infty^s.
	$	Thus $\widetilde M^s(x;z)$ and $M^s(x;z)$ satisfy the same Volterra problem. By uniqueness,
	$
	M^s(x;z)
	=
	\sigma_1M^s(x;z^{-1})\sigma_1.
	$

	\subsection{Proof of Proposition \ref{pro3.3}, property \ref{prop:pro3.3-property-1} }\label{app:pro1}
	
	We prove the assertions for $\Psi_1^\ell(x;z)$. The other three columns can be treated in the same way.
	
	Taking the first column of \eqref{eq:direct_z_Volterra}, we obtain
	\begin{align}
		M^{\ell1}(x;z)
		={}e_1+
		\int_{-\infty}^{x}
		\begin{pmatrix}
			1 & 0\\
			0 & e^{2i(x-y)p^\ell(z)}
		\end{pmatrix}
		Y^\ell(z;y,0)^{-1}
		e^{-i(y-x_0^\ell)p_\infty^\ell\sigma_3}
		i\sigma_3\Delta Q^\ell(y)
		e^{i(y-x_0^\ell)p_\infty^\ell\sigma_3}
		Y^\ell(z;y,0)M^{\ell1}(y;z)\,dy,
		\label{eq:Volterra-first-column}
	\end{align}
	where $e_1=(1,0)^T$. 
	\par 
	Let $\mathcal{D}$ be a compact subset of $\mathbb{C}^+ \setminus \Sigma_1^\ell$, bounded away from $z=0$ and the endpoints of the cuts. By
	Lemma~\ref{lemfoin},
	$
	\operatorname{Im}p^\ell(z)>0$,
	$ z\in \mathcal{D}.
	$
	Since $y\leq x$, it follows that
	\[
	\left|e^{2i(x-y)p^\ell(z)}\right|
	=
	e^{-2(x-y)\operatorname{Im}p^\ell(z)}
	\leq1.
	\]
	Moreover, $p_\infty^\ell\in\mathbb{R}$, and the background matrices $Y^\ell(z;y,0)$ and $Y^\ell(z;y,0)^{-1}$ are uniformly bounded for $z\in \mathcal{D}$ and $y\in\mathbb{R}$. Let $C_{\mathcal{D}}>0$ denote a constant depending only on the compact set $\mathcal{D}$. Hence the kernel in \eqref{eq:Volterra-first-column} satisfies
	\begin{equation}	\label{eq:Volterra-kernel-bound}
		\left\|
		\begin{pmatrix}
			1 & 0\\
			0 & e^{2i(x-y)p^\ell(z)}
		\end{pmatrix}
		Y^\ell(z;y,0)^{-1}
		e^{-i(y-x_0^\ell)p_\infty^\ell\sigma_3}
		i\sigma_3\Delta Q^\ell(y)
		e^{i(y-x_0^\ell)p_\infty^\ell\sigma_3}
		Y^\ell(z;y,0)
		\right\|
		\leq C_\mathcal{D}\|\Delta Q^\ell(y)\|.
	\end{equation}

	Let $\mathcal{K}_z^\ell$ denote the integral operator in \eqref{eq:Volterra-first-column}. Iterating \eqref{eq:Volterra-kernel-bound} over the ordered simplex
	$
	-\infty<y_n<\cdots<y_1<x
	$
	gives
	\[
	\left\|
	(\mathcal{K}_z^\ell)^n e_1(x)
	\right\|
	\leq
	\frac{C_\mathcal{D}^n}{n!}
	\left(
	\int_{-\infty}^{x}
	\|\Delta Q^\ell(y)\|\,dy
	\right)^n.
	\label{eq:Volterra-iterate-bound}
	\]
	Consequently, for every $x_*\in\mathbb{R}$, the Neumann series
	$
	M^{\ell1}(x;z)
	=
	\sum_{n=0}^{\infty}
	(\mathcal{K}_z^\ell)^n e_1(x)
	$
	converges uniformly for $x\leq x_*$ and $z\in \mathcal{D}$. It therefore defines a solution of \eqref{eq:Volterra-first-column} belonging to $L^\infty((-\infty,x_*])$.
	
	To prove uniqueness, suppose that $M_{(1)}^{\ell1}$ and $M_{(2)}^{\ell1}$ are two solutions of \eqref{eq:Volterra-first-column} in $L^\infty((-\infty,x_*])$, and set
	\[
	f(x)
	=
	M_{(1)}^{\ell1}(x;z)
	-
	M_{(2)}^{\ell1}(x;z).
	\]
	Then $f=\mathcal{K}_z^\ell f$. Iterating this identity gives $f=(\mathcal{K}_z^\ell)^n f$, for $n\geq1$. By the same Volterra estimate as above, for every $x\leq x_*$,
	\[
	\|f\|_{L^\infty((-\infty,x_*])}
	\leq
	\frac{1}{n!}
	\left(
	C_\mathcal{D}
	\int_{-\infty}^{x_*}
	\|\Delta Q^\ell(y)\|\,dy
	\right)^n
	\|f\|_{L^\infty((-\infty,x_*])}.
	\]
	Therefore
	$
	\|f\|_{L^\infty((-\infty,x_*])}=0,
	$
	and thus
	$
	M_{(1)}^{\ell1} = M_{(2)}^{\ell1}.
	$
	This proves uniqueness. Each term of the Neumann series is analytic in $z\in \mathcal{D}$, and \eqref{eq:Volterra-iterate-bound} gives locally uniform convergence.
	Thus $M^{\ell1}(x;z)$ is analytic in $\mathbb{C}^+ \setminus \Sigma_1^\ell$. The same estimate, together with dominated convergence, also gives continuous boundary values away from $z=0$ and the endpoints of the cuts.
	
	It remains to show that the apparent jump across $\Sigma_0^\ell$ is removable. On $\Sigma_0^\ell$, one has
	$
	p_+^\ell(z)-p_-^\ell(z)=\Omega_1^\ell
	$
	and, at $t=0$,
	\[
	Y_+^\ell(z;x,0)
	=
	Y_-^\ell(z;x,0)
	e^{i(\Omega_0^\ell + x \Omega_1^{\ell} )\sigma_3}.
	\]
	Therefore, by \eqref{eq:direct_z_Volterra}, we have
	\begin{equation*}
		\begin{aligned}
			&e^{i(\Omega_0^\ell + x \Omega_1^{\ell} )\sigma_3} M^\ell_+(x;z) e^{-i(\Omega_0^\ell + x \Omega_1^{\ell} )\sigma_3}
			={}I+
			\int_{-\infty}^{x}
			e^{-i(x-y)p^\ell_-(z)\sigma_3}
			Y^\ell_-(z;y,0)^{-1}
			e^{-i(y-x_0^\ell)p_\infty^\ell\sigma_3}
			\nonumber\\
			&\qquad\qquad\qquad\times
			i\sigma_3\Delta Q^\ell(y)
			e^{i(y-x_0^\ell)p_\infty^\ell\sigma_3}
			Y^\ell_-(z;y,0) e^{i(\Omega_0^\ell + y \Omega_1^{\ell} )\sigma_3} M^\ell_+(y;z) e^{-i(\Omega_0^\ell + y \Omega_1^{\ell} )\sigma_3}
			e^{i(x-y)p^\ell_-(z)\sigma_3}
			\,dy .
		\end{aligned}
	\end{equation*}
	By uniqueness, 
	\begin{equation*}
		e^{i(\Omega_0^\ell + x \Omega_1^{\ell} )\sigma_3} M^\ell_+(x;z) e^{-i(\Omega_0^\ell + x \Omega_1^{\ell} )\sigma_3}
		= M^\ell_-(x;z) , \quad z \in \Sigma_0^\ell,
	\end{equation*}
	which implies that 
	$
	\Psi_{+}^\ell(x;z)
	=
	\Psi_{-}^\ell(x;z)$
	for $z\in\Sigma_0^\ell.
	$
	Hence the jump across $\Sigma_0^\ell$ is removable, and Morera's theorem yields that
	$
	\Psi_1^\ell(x;z)$ is analytic for $z\in\mathbb{C}^+\setminus\Sigma_1^\ell.$
	
	For $z\in\mathbb{R}\setminus\{0\}$ or for either boundary value in the interior of $\Sigma^\ell$, the quasi-momentum is real. The same estimates therefore hold for both columns, and the corresponding background factors are bounded in $x$. It follows that
	$
	\Psi^\ell(\cdot;z)
	\in L^\infty((-\infty,x_*]).
	$
	
	Finally, the remaining columns follow from the same argument. The
	relevant diagonal exponential factors are
	\[
	\begin{array}{c|c|c}
		\text{column}
		&
		\text{integration range}
		&
		\text{exponential factor}
		\\ \hline
		M^{\ell2}
		&
		y\leq x
		&
		e^{-2i(x-y)p^\ell(z)}
		\\
		M^{r1}
		&
		y\geq x
		&
		e^{2i(x-y)p^r(z)}
		\\
		M^{r2}
		&
		y\geq x
		&
		e^{-2i(x-y)p^r(z)}.
	\end{array}
	\]
	Using Lemma~\ref{lemfoin}, these factors are bounded precisely in the stated domains. Therefore $\Psi_2^\ell(x;z)$ and $\Psi_1^r(x;z)$ are analytic in $ \mathbb{C}^-\setminus\Sigma_2^\ell$ and $ \mathbb{C}^-\setminus\Sigma_2^r$, respectively, whereas $\Psi_2^r(x;z)$ is analytic in $\mathbb{C}^+\setminus\Sigma_1^r$.
	The corresponding right Jost solution belongs to $L^\infty([x_*,\infty))$, completing the proof.

	\subsection{Proof of Proposition \ref{pro3.3} property \ref{prop:pro3.3-property-2}}
	For $z\in\Sigma_1^s\cup\Sigma_2^s$, the boundary values of the quasi-momentum satisfy
	$
	p_+^s(z)+p_-^s(z)=0.
	$
	Moreover, from the jump conditions of the background matrix
	$Y^s(z;x,0)$, we have
	\begin{equation*}
		Y_+^s(z;x,0)
		=
		Y_-^s(z;x,0)
		\begin{cases}
			-i\sigma_1,
			& z\in\Sigma_1^s,
			\\[1ex]
			i\sigma_1,
			& z\in\Sigma_2^s.
		\end{cases}
	\end{equation*}
	
	We first compare the boundary values of $M^s$. For $z \in \Sigma_1^s$, substituting the above jump relations into the Volterra equation \eqref{eq:direct_z_Volterra}, one obtains
	\begin{align}
		-i\sigma_1M_+^s(x;z) i \sigma_1
		=
		I+
		&\int_{\infty^s}^{x}
		e^{-i(x-y)
			p_-^s(z)\sigma_3}
		Y_-^s(z;y,0)^{-1} e^{-i(y - x_0^s) p_{\infty}^s \sigma_3 }
		i\sigma_3\Delta Q^s(y) e^{i(y - x_0^s) p_{\infty}^s \sigma_3 }
		Y_-^s(z;y,0)
		\nonumber
		\\
		&\qquad\times
		(-i\sigma_1) M_+^s(y;z) i\sigma_1
		e^{i(x-y)
			p_-^s(z)\sigma_3}
		\,dy. \label{eq:integ}
	\end{align}
	The integral equation \eqref{eq:integ} is exactly the Volterra equation satisfied by $M_-^s(x;z)$. Therefore, by uniqueness,
	$
	M_+^s(x;z)
	=
	-i\sigma_1
	M_-^s(x;z)
	i\sigma_1$, for $z\in \Sigma_1^s.
	$
	It follows from \eqref{eq:direct_z_M_def} and the jump relation for $Y_\pm^s(z;x)$ that
	$
	\Psi_+^s(x;z)
	=
	\Psi_-^s(x;z)(-i\sigma_1)$, for  $z\in\Sigma_1^s.
	$
	The corresponding relation on $\Sigma_2^s$ follows in the same way.

	\subsection{Proof of Proposition \ref{pro3.3}, property \ref{prop:pro3.3-property-3}}
	We first prove the expansion as $z\to\infty$. It is sufficient to consider the left Jost solution. All the estimates below are understood columnwise in the corresponding domains of analyticity.
	
	Let $\mathcal{K}_z^\ell$ denote the integral operator appearing in \eqref{eq:direct_z_Volterra}, namely,
	\begin{equation*}
		(\mathcal{K}_z^\ell F)(x)
		=
		\int_{-\infty}^{x}
		e^{-i(x-y)p^\ell(z)\sigma_3}
		B^\ell(y;z)F(y;z)
		e^{i(x-y)p^\ell(z)\sigma_3}\,dy,
	\end{equation*}
	where
	\begin{equation*}
		B^\ell(y;z)
		=
		Y^\ell(z;y,0)^{-1}
		e^{-i(y-x_0^\ell)p_\infty^\ell\sigma_3}
		i\sigma_3\Delta Q^\ell(y)
		e^{i(y-x_0^\ell)p_\infty^\ell\sigma_3}
		Y^\ell(z;y,0).
	\end{equation*}
	By the existence and uniqueness result proved in property \ref{prop:pro3.3-property-1}, one has the convergent Neumann expansion
	\begin{equation*}
		M^\ell(x;z)
		=
		I+\sum_{m=1}^{\infty}(\mathcal{K}_z^\ell)^mI(x).
	\end{equation*}
	
	Since $Y^\ell(z;y,0)$ is the finite-gap background matrix, both $Y^\ell(z;y,0)$ and its inverse possess Laurent expansions of arbitrary order at infinity. Their coefficients, together with all their $y$-derivatives, are bounded on the real axis. Consequently,
	\begin{equation*}
		B^\ell(y;z)
		=
		B_0^\ell(y)+\sum_{j=1}^{n-1}\frac{B_j^\ell(y)}{z^j}
		+\mathcal{O}(z^{-n}),
	\end{equation*}
	where the expansion and the corresponding derivatives are controlled in $L^1((-\infty,x])$. The leading coefficient is
	\begin{equation*}
		B_0^\ell(y)
		=
		e^{-i(y-x_0^\ell)p_\infty^\ell\sigma_3}
		i\sigma_3\Delta Q^\ell(y)
		e^{i(y-x_0^\ell)p_\infty^\ell\sigma_3},
	\end{equation*}
	and is off-diagonal.
	
	We now examine the large-$z$ behavior of the successive Volterra iterates. Conjugation by the diagonal exponential leaves diagonal matrices unchanged, whereas the off-diagonal entries acquire the factors $e^{2i(x-y)p^\ell(z)}$ or $e^{-2i(x-y)p^\ell(z)}$. For a function $g$ such that $g,g'\in L^1((-\infty,x])$, integration by parts gives
	\begin{equation*}
		\int_{-\infty}^{x}
		e^{\pm2i(x-y)p^\ell(z)}g(y)\,dy
		=
		\mp\frac{g(x)}{2ip^\ell(z)}
		\pm\frac{1}{2ip^\ell(z)}
		\int_{-\infty}^{x}
		e^{\pm2i(x-y)p^\ell(z)}g'(y)\,dy.
	\end{equation*}
	The boundary contribution at $y=-\infty$ vanishes because $g,g'\in L^1((-\infty,x])$. Since
	$
	p^\ell(z)
	=
	\frac{z}{2}+p_\infty^\ell+\mathcal{O}(z^{-1}),
	$ for $z \to \infty$, 
	each such integration by parts produces one additional factor of order $z^{-1}$.
	
	The identity matrix is diagonal, while $B_0^\ell$ is off-diagonal. Therefore, in the first iteration, the leading term is off-diagonal and contains an oscillatory exponential. One integration by parts gives
	\begin{equation*}
		\mathcal{K}_z^\ell I
		=
		\mathcal{O}(z^{-1}), \qquad z\to\infty
	\end{equation*}
	Its leading coefficient is off-diagonal. In the second iteration, multiplication by $B_0^\ell$ transforms this off-diagonal leading coefficient into a diagonal one. No additional integration by parts is needed at this step, and hence
	\begin{equation*}
		(\mathcal{K}_z^\ell)^2I
		=
		\mathcal{O}(z^{-1}), \qquad z\to\infty
	\end{equation*}
	At the third iteration, the leading diagonal coefficient is again transformed into an off-diagonal one, and another integration by parts yields an additional factor $z^{-1}$. Continuing in this way, one obtains
	\begin{equation*}
		(\mathcal{K}_z^\ell)^mI
		=
		\mathcal{O}\left(z^{-\lceil m/2\rceil}\right),
		\qquad z\to\infty.
	\end{equation*}
	
	More precisely, repeated integration by parts, together with the Laurent expansion of $B^\ell(y;z)$, shows that the $m$th iterate admits an expansion through order $z^{-n}$ whose first possible power is $z^{-\lceil m/2\rceil}$. The terms $B_j^\ell$, $j\geq1$, already contain at least one additional power of $z^{-1}$ and therefore do not produce any lower-order contribution. The assumption
	$
	u(x)-u_0^\ell(x)\in W^{n,1}(\mathbb{R}^-)
	$
	guarantees that all integrations by parts required up to order $z^{-n}$ are justified.
	
	It follows that only the iterates with $m\leq2n-2$ can contribute to the coefficients of $z^{-1},\ldots,z^{-(n-1)}$. Moreover, the ordered-simplex estimates used in Subsection \ref{app:pro1} imply that the constants in these estimates are summable with respect to $m$. Hence
	\begin{equation*}
		\sum_{m=2n-1}^{\infty}
		(\mathcal{K}_z^\ell)^mI(x)
		=
		\mathcal{O}(z^{-n}),
	\end{equation*}
	and therefore
	\begin{equation*}
		M^\ell(x;z)
		=
		I+\sum_{j=1}^{n-1}\frac{\widehat{M}_j^\ell(x)}{z^j}
		+\mathcal{O}(z^{-n}).
	\end{equation*}
	The same argument gives the expansion of $M^r(x;z)$.
	\par 
	We now return to the Jost solutions. By \eqref{eq:direct_z_M_def},
	\begin{equation*}
		\Psi^s(x;z)e^{i(x-x_0^s)k(z)\sigma_3}
		={}
		e^{i(x-x_0^s)p_\infty^s\sigma_3}
		Y^s(z;x,0)M^s(x;z) 
		e^{-i(x-x_0^s)(p^s(z)-k(z))\sigma_3}.
	\end{equation*}
	Multiplying the preceding expansions therefore gives
	\begin{equation*}
		\Psi^s(x;z)e^{i(x-x_0^s)k(z)\sigma_3}
		=
		I+\sum_{j=1}^{n-1}\frac{C_j^s(x)}{z^j}
		+\mathcal{O}(z^{-n}),
		\qquad z\to\infty.
	\end{equation*}
	
	It remains to determine the structure of $C_1^s(x)$. Set
	\begin{equation}\label{eq:defW}
		W^s(x;z)
		=
		\Psi^s(x;z)e^{i(x-x_0^s)k(z)\sigma_3}.
	\end{equation}
	Using the spatial part of the Lax pair \eqref{eq:direct_z_lax}, we obtain
	\begin{equation}\label{eq:appwx}
		\partial_xW^s
		=
		i\sigma_3Q(x)W^s-ik(z)[\sigma_3,W^s].
	\end{equation}
	Substituting
	\begin{equation*}
		W^s(x;z)
		=
		I+\frac{C_1^s(x)}{z}+\mathcal{O}(z^{-2})
	\end{equation*}
	in \eqref{eq:appwx} and recalling $k(z)=\frac{z}{2}+\frac{1}{2z}$, the terms of order one give $[\sigma_3,C_1^s(x)]	= 2\sigma_3Q(x)$.
	It follows that
	$(C_1^s)_{12}
	=
	\overline{u(x)}$,
	and
	$(C_1^s)_{21}
	=
	u(x).$
	
	The coefficient matrix of the spatial Lax equation is traceless, so $\det\Psi^s(x;z)$ is independent of $x$. Consequently, all the coefficients in its asymptotic expansion as $z\to\infty$ are also independent of $x$. Indeed, the spatial part of the Lax pair can be written as
	$
	\partial_x\Psi^s(x;z)=U(x;z)\Psi^s(x;z)$,
	$\operatorname{tr}U(x;z)=0.
	$
	By Jacobi's formula,
	$
	\partial_x\det\Psi^s
	=
	\det\Psi^s\,
	\operatorname{tr}\left((\Psi^s)^{-1}\partial_x\Psi^s\right).
	$
	Substituting the spatial Lax equation and using the invariance of the trace under similarity transformations, we obtain
	$
	\operatorname{tr}\left((\Psi^s)^{-1}\partial_x\Psi^s\right)
	=
	\operatorname{tr}\left((\Psi^s)^{-1}U\Psi^s\right)
	=
	\operatorname{tr}U
	=
	0.
	$
	Therefore,
	$
	\partial_x\det\Psi^s(x;z)=0.
	$
	Evaluating this $x$-independent quantity by means of the Jost normalization and using $\det Y^s(z;x,0)=1-z^{-2}$, we obtain
	\begin{equation*}
		\det W^s(x;z)=\det\Psi^s(x;z)=1-z^{-2}.
	\end{equation*}
	On the other hand,
	\begin{equation*}
		\det\left(
		I+\frac{C_1^s(x)}{z}+\mathcal{O}(z^{-2})
		\right)
		=
		1+\frac{\operatorname{tr}C_1^s(x)}{z}
		+\mathcal{O}(z^{-2}).
	\end{equation*}
	Consequently, $\operatorname{tr}C_1^s(x)=0$. Thus, for some scalar function $A^s(x)$,
	\begin{equation*}
		C_1^s(x)
		=
		\begin{pmatrix}
			A^s(x) & \overline{u(x)} \\
			u(x) & -A^s(x)
		\end{pmatrix}.
	\end{equation*}
	
	Taking the diagonal part of the coefficient of $z^{-1}$ in the equation for $W^s$ \eqref{eq:appwx}, we obtain
	\begin{equation*}
		\partial_x\operatorname{diag}C_1^s(x)
		=
		\operatorname{diag}\left(
		i\sigma_3Q(x)C_1^s(x)
		\right)
		=
		\operatorname{diag}\left(
		i\sigma_3Q(x)C_1^s(x)
		\right)
		=
		\begin{pmatrix}
			i|u(x)|^2 & 0 \\
			0 & -i|u(x)|^2
		\end{pmatrix}.
	\end{equation*}
	We conclude that
	$
	\frac{d}{dx}A^s(x)
	=
	i|u(x)|^2.
	$
	\par 
	Finally, by using the symmetry of RHP \ref{RH5} and \eqref{eq:symMs}, we derive \eqref{eq:invsym}, i.e., 	$\Psi^s(x;z) = z^{-1}\Psi^s(x;z^{-1})\sigma_1$.
	Applying the large-$z$ expansion to $z^{-1}$ gives
	\begin{equation*}
		\Psi^s(x;z)e^{-i(x-x_0^s)k(z)\sigma_3}
		=
		\frac{\sigma_1}{z}
		+\sum_{j=1}^{n-1}z^{j-1}C_j^s(x)\sigma_1
		+\mathcal{O}(z^{n-1}),
		\qquad z\to0.
	\end{equation*}
	In particular,
	\begin{equation*}
		\Psi^s(x;z)e^{-i(x-x_0^s)k(z)\sigma_3}
		=
		\frac{\sigma_1}{z}
		+
		\begin{pmatrix}
			\overline{u(x)} & A^s(x) \\
			-A^s(x) & u(x)
		\end{pmatrix}
		+\mathcal{O}(z),
		\qquad z\to0.
	\end{equation*}

	\subsection{Proof of Proposition \ref{pro3.3}, property \ref{prop:pro3.3-property-4}}
	We first establish a uniform estimate for the matrix $\Psi_0^s(x;z)\Psi_0^s(y;z)^{-1}$. By \eqref{eq:direct_z_background} and
	\begin{equation}\label{eq:trF}
		Y^s(z;x,0)=\sqrt{1-z^{-2}}\,F^s(k(z);x,0),
	\end{equation}
	the scalar factors cancel, and hence
	\begin{equation}
		\Psi_0^s(x;z)\Psi_0^s(y;z)^{-1}=e^{i(x-x_0^s)p_\infty^s\sigma_3}F^s(k(z);x,0)e^{-i(x-y)p^s(z)\sigma_3}F^s(k(z);y,0)^{-1}e^{-i(y-x_0^s)p_\infty^s\sigma_3}.
	\end{equation}
	The two outer diagonal matrices are uniformly bounded and therefore do not affect the estimates below.
	
	Write the four theta-function quotients appearing in \eqref{solF} as $G_{ij}^s(x;z)$. Then
	\begin{equation*}
		F^s(k(z);x,0)=\frac{1}{2}\begin{pmatrix}
			(h^s+h^{s,-1})G_{11}^s(x;z) & -(h^s-h^{s,-1})G_{12}^s(x;z)\\
			-(h^s-h^{s,-1})G_{21}^s(x;z) & (h^s+h^{s,-1})G_{22}^s(x;z)
		\end{pmatrix},
	\end{equation*}
	where 
	\begin{equation}\label{eq:h-k}
		h^s=h^s(k(z)) = \left( \frac{z - 1}{z + 1} \right)^{1/2} \left[ \frac{(z - \eta_2^s)(z - \overline{\eta_2^s})}{(z - \eta_1^s)(z - \overline{\eta_1^s})} \right]^{1/4}.
	\end{equation}
	By the transformation \eqref{eq:trF}, we see that $ Y^s(z;x,0) $ has no singularity at $z = \pm 1$.
	Since $\det F^s=1$, a direct computation gives
	\begin{equation}\label{eq:background-propagator-decomposition}
		F^s(k(z);x,0)e^{-i(x-y)p^s(z)\sigma_3}F^s(k(z);y,0)^{-1}
		=\frac{h^s(k(z))^{-2}}{4}\mathcal{F}_1^s(x,y;z)+\frac{1}{2}\mathcal{F}_0^s(x,y;z)+\frac{h^s(k(z))^2}{4}\sigma_3\mathcal{F}_1^s(x,y;z)\sigma_3,
	\end{equation}
	where
	\begin{equation*}
		\begin{aligned}
			\mathcal{F}_0^s(x,y;z)={}&
			\begin{pmatrix}
				G_{11}^s(x;z)G_{22}^s(y;z) & 0\\
				0 & G_{21}^s(x;z)G_{12}^s(y;z)
			\end{pmatrix}
			e^{-i(x-y)p^s(z)}\\
			&+
			\begin{pmatrix}
				G_{12}^s(x;z)G_{21}^s(y;z) & 0\\
				0 & G_{22}^s(x;z)G_{11}^s(y;z)
			\end{pmatrix}
			e^{i(x-y)p^s(z)},
		\end{aligned}
	\end{equation*}
	which is uniformly bounded near the band endpoints and
	\begin{equation}\label{eq:K1-endpoint}
		\begin{aligned}
			\mathcal{F}_1^s(x,y;z)={}&
			\begin{pmatrix}G_{11}^s(x;z)\\ G_{21}^s(x;z)\end{pmatrix}
			\begin{pmatrix}G_{22}^s(y;z)&-G_{12}^s(y;z)\end{pmatrix}e^{-i(x-y)p^s(z)}\\
			&-
			\begin{pmatrix}G_{12}^s(x;z)\\ G_{22}^s(x;z)\end{pmatrix}
			\begin{pmatrix}G_{21}^s(y;z)&-G_{11}^s(y;z)\end{pmatrix}e^{i(x-y)p^s(z)}.
		\end{aligned}
	\end{equation}
	\par 
	Recall the Abel map $J(k)$ in the $k$-plane defined by \eqref{Abelmap}. Under the Joukowski transformation, the corresponding function is (see Remark \ref{rem:relation-Abel-maps})
	$
	\mathcal{J}(z) = J\Big( \frac{z + z^{-1}}{2} \Big),
	$
	with endpoint values
	\begin{equation*}
		\mathcal{J}_+(\eta_1) = J_+(\re \eta_1)= -\frac{1}{2}, \quad 
		\mathcal{J}_+(\overline{\eta_1}) = \frac{1}{2}, \quad 
		\mathcal{J}_+(\eta_2) = J_+(\re \eta_2) = -\frac{1}{2} + \frac{\tau}{2}, \quad 
		\mathcal{J}_+(\overline{\eta_2}) = \frac{1}{2} + \frac{\tau}{2}.
	\end{equation*}
	Note that the superscript ``$+$'' used here for the endpoint value of $\mathcal J$ does not mean that $\mathcal J_+(\zeta)=J_{z+}(\zeta)$. In fact, one has $\mathcal J_+(\zeta)=J_{z-}(\zeta)$, since the integration path defining $\mathcal J_+(\zeta)$ lies essentially on the upper sheet of the Riemann surface in the $k$-plane, which corresponds to the region outside the unit circle in the $z$-plane.
	\par 
	Since both $dp^s$ and the differential defining the Abel map have square-root singularities at the branch points,
	\begin{equation}\label{eq:p-J-endpoint}
		p^s_+(z)-p^s_+(\zeta)=\mathcal O(|z-\zeta|^{1/2}),\qquad \mathcal{J}_+^s(z)-\mathcal{J}_+^s(\zeta)=\mathcal O(|z-\zeta|^{1/2}),
	\end{equation}
	where we clarify once again that the integration path defining $p_+^s(z)$ lies inside the unit circle and is oriented along the positive side of the contour, as shown in Figure \ref{fig:jump}.

	\begin{lem}\label{lem:regularized-background-endpoint}
		Choose $\varepsilon>0$ sufficiently small so that the $\varepsilon$-neighborhoods of the four endpoints $\eta_1^s$, $\eta_2^s$, $\overline{\eta_1^s}$, and $\overline{\eta_2^s}$ are pairwise disjoint. For $s\in\{\ell,r\}$, define the regularized Jost and background solutions near the band endpoints by
		\begin{equation*}
			\widetilde{\Psi}_{0}^s(x;z):=
			\begin{cases}
				h^s(k(z))^{-1}\Psi_{0}^s(x;z),
				&\min\left\{|z-\eta_1^s|,|z-\overline{\eta_1^s}|\right\}<\varepsilon,\\[1ex]
				h^s(k(z))\Psi_{0}^s(x;z),
				&\min\left\{|z-\eta_2^s|,|z-\overline{\eta_2^s}|\right\}<\varepsilon.
			\end{cases}
		\end{equation*}
		\begin{equation}\label{eq:regularized-Psi-definitions}
			\widetilde{\Psi}^s(x;z):=
			\begin{cases}
				h^s(k(z))^{-1}\Psi^s(x;z),
				&\min\left\{|z-\eta_1^s|,|z-\overline{\eta_1^s}|\right\}<\varepsilon,\\[1ex]
				h^s(k(z))\Psi^s(x;z),
				&\min\left\{|z-\eta_2^s|,|z-\overline{\eta_2^s}|\right\}<\varepsilon.
			\end{cases}
		\end{equation}
		\par 
		For every fixed $x\in\mathbb R$, the matrix $\widetilde{\Psi}_0^s(x;\zeta)$ is independent of the choice of the boundary value and satisfy
		\begin{equation}\label{eq:regularized-background-endpoint}
			\begin{aligned}
				h_{\pm}^{s}(k(z))^{-1} \Psi_{0 \pm}^{s}(x ; z) &= \widetilde{\Psi}_{0}^{s}(x ; \zeta) + \mathcal{O}\left(|z-\zeta|^{1 / 2}\right), \quad \zeta \in\{\eta_{1}^{s}, \overline{\eta_{1}^{s}}\}, \\
				h_{\pm}^{s}(k(z)) \Psi_{0 \pm}^{s}(x ; z) &= \widetilde{\Psi}_{0}^{s}(x ; \zeta) + \mathcal{O}\left(|z-\zeta|^{1 / 2}\right), \quad \zeta \in\{\eta_{2}^{s}, \overline{\eta_{2}^{s}}\}.
			\end{aligned}
		\end{equation}
	\end{lem}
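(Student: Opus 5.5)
We work with the explicit form of the background solution. By \eqref{eq:direct_z_background} and \eqref{eq:trF},
\[
\Psi_0^s(x;z)=e^{i(x-x_0^s)p_\infty^s\sigma_3}\sqrt{1-z^{-2}}\,F^s(k(z);x,0)\,e^{-i(x-x_0^s)p^s(z)\sigma_3}.
\]
In \eqref{solF}, $F^s$ is written through the scalar factors $\tfrac12(h^s\pm h^{s,-1})$ times the theta quotients $G^s_{ij}$.

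Near $\zeta\in\{\eta_1^s,\overline{\eta_1^s}\}$ we have $h^s\to\infty$ like $|z-\zeta|^{-1/4}$. Multiplying by $h^{s,-1}$ gives
\[
h^{s,-1}\cdot\tfrac12(h^s\pm h^{s,-1})=\tfrac12\bigl(1\pm h^{s,-2}\bigr)=\tfrac12+\mathcal O(|z-\zeta|^{1/2}).
\]
Near $\zeta\in\{\eta_2^s,\overline{\eta_2^s}\}$ we have $h^s\to0$, and multiplying by $h^s$ gives
\[
\tfrac12\bigl(h^{s,2}\pm1\bigr)=\pm\tfrac12+\mathcal O(|z-\zeta|^{1/2}).
\]
In both cases the regularized scalar prefactors are H\"older-$1/2$ functions with a common limit from the two sides. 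This is because $h^{s,\pm2}$ carries a square root $(z-\zeta)^{\pm1/2}$ that vanishes at $\zeta$, while its boundary values differ only by the factor $h_+^2=-h_-^2$ coming from \eqref{eq:h-upper-jump}.

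The remaining ingredients are the theta quotients $G^s_{ij}$, the factor $\sqrt{1-z^{-2}}$, and the exponential $e^{-i(x-x_0^s)p^s(z)\sigma_3}$.

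1. The factor $\sqrt{1-z^{-2}}$ is analytic and nonzero near $\zeta$, since $\zeta\neq\pm1$.

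2. For the exponential, \eqref{eq:p-J-endpoint} gives $p_\pm^s(z)=p_\pm^s(\zeta)+\mathcal O(|z-\zeta|^{1/2})$. Since $x$ is fixed, $e^{-i(x-x_0^s)p^s_\pm(z)\sigma_3}$ is H\"older-$1/2$ up to $\zeta$.

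3. For the $G^s_{ij}$, the arguments are $\pm\Omega/2\pi+\mathcal J(z)\pm J(\infty)$. By \eqref{eq:p-J-endpoint} these are H\"older-$1/2$ in $z$. Since $\theta_3$ is entire, the numerators are as well. The denominators $\theta_3(\mathcal J(z)\pm J(\infty))$ must not vanish at $\zeta$. We will use that the zero of $\theta_3$ lies at the half-period $\tfrac12+\tfrac\tau2$, together with the endpoint values listed in Remark~\ref{rem:relation-Abel-maps}, to check that $\mathcal J_\pm(\zeta)\pm J(\infty)$ avoids this point. Alternatively, this follows from the known regularity of $F$ at the branch points guaranteed by RHP~\ref{RHp2}.

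Taking products of H\"older-$1/2$ functions then yields the expansion \eqref{eq:regularized-background-endpoint} with $\widetilde\Psi_0^s(x;\zeta)$ defined as the one-sided limit.

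The main obstacle is showing that the two one-sided limits coincide. We will use the jump relation $\Psi^s_{0+}=\Psi^s_{0-}(\mp i\sigma_1)$ on $\Sigma_{1,2}^s$ together with $h^s_+=ih^s_-$, which give
\[
h_+^{s,-1}\Psi_{0+}^s=h_-^{s,-1}\Psi^s_{0-}\,(-\sigma_1),
\]
with an analogous relation for the $h^s$-weighted version near $\eta_2^s$. So the two boundary limits differ a priori by a right multiplication by a constant matrix. The plan is to show that at $\zeta$ this multiplication acts trivially on the limit. Concretely, the limiting regularized matrix has the form $\tfrac12(\text{vector})\cdot(1,\pm1)$-type rank-one structure in each entry pattern. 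This reflects the fact that $p_+(\zeta)=p_-(\zeta)$ (since $p_++p_-=0$ and $p(\zeta)$ is a half-period value) and that $\mathcal J_+(\zeta)$ and $\mathcal J_-(\zeta)$ differ by a lattice vector. Under that lattice shift the theta quotients transform by factors that exactly compensate the exponential, using the explicit endpoint values $\mathcal J_+(\eta_1)=-\tfrac12$, $\mathcal J_+(\eta_2)=-\tfrac12+\tfrac\tau2$, and so on.

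If the direct theta bookkeeping becomes unwieldy, the fallback is the following. Take the boundary-value-independent object to be the limit from outside the unit circle, where $\Psi_0^s$ is single-valued near $\zeta$ off the cut. Then show that the $\pm$ limits of the product (which is H\"older on each side) both equal this nontangential limit, by continuity of a H\"older function on a slit neighbourhood.
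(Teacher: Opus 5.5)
Your proposal follows the paper's proof: the same factor-by-factor H\"older-$1/2$ expansion of $h^{\pm1}\Psi_{0\pm}^s$ from the explicit theta formula, then the jump relation $h_+^s\Psi_{0+}^s=h_-^s\Psi_{0-}^s\sigma_1$ (with $-\sigma_1$ near $\eta_1^s$), combined with the endpoint theta identities relating $G^s_{12\pm},G^s_{22\pm}$ to $G^s_{11\pm},G^s_{21\pm}$ on each side, which give the column structure you describe so that right multiplication by $\pm\sigma_1$ acts trivially on the limit. One correction to your heuristic: $p^s_\pm(\eta_2^s)=\pm\Omega_1^s/2$ are not equal but differ by the period $\Omega_1^s$, and it is this shift in the exponential factor $e^{-i(x-x_0^s)p^s\sigma_3}$ that the quasi-periodicity multiplier of the theta quotients compensates.
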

	
	\begin{proof}
		We prove the result for $\zeta=\eta_2^s$. The remaining cases are treated similarly. By \eqref{eq:direct_z_background} and \eqref{eq:trF},
		\begin{equation*}
			\Psi_{0\pm}^s(x;z)=e^{i(x-x_0^s)p_\infty^s\sigma_3}\sqrt{1-z^{-2}}\,F_\pm^s(k(z);x,0)e^{-i(x-x_0^s)p_\pm^s(z)\sigma_3}.
		\end{equation*}
		Using the explicit formula for $F^s$, we obtain
		\begin{equation}\label{eq:hF-regularized-eta2}
			h_\pm^s(k(z))F_\pm^s(k(z);x,0)=\frac12
			\begin{pmatrix}
				\bigl(1+h_\pm^s(k(z))^2\bigr)G_{11 \pm}^s(x;z) & \bigl(1-h_\pm^s(k(z))^2\bigr)G_{12 \pm}^s(x;z)\\
				\bigl(1-h_\pm^s(k(z))^2\bigr)G_{21 \pm}^s(x;z) & \bigl(1+h_\pm^s(k(z))^2\bigr)G_{22 \pm}^s(x;z)
			\end{pmatrix}.
		\end{equation}
		Since $\eta_2^s$ is a simple branch point,
		$
		h_\pm^s(k(z))^2=\mathcal O\left(|z-\eta_2^s|^{1/2}\right).
		$
		Moreover, by \eqref{eq:p-J-endpoint} and for each boundary value separately,
		\begin{equation*}
			\mathcal J_\pm^s(z)=\mathcal J_\pm^s(\eta_2^s)+\mathcal O\left(|z-\eta_2^s|^{1/2}\right),\qquad
			p_\pm^s(z)=p_\pm^s(\eta_2^s)+\mathcal O\left(|z-\eta_2^s|^{1/2}\right),
		\end{equation*}
		where $p_+^s(\eta_2^s)=-p_-^s(\eta_2^s)$. Since the theta-function quotients are analytic functions of the Abel map near the endpoint,
		\begin{equation*}
			G_{ij \pm}^s(x;z)=G_{ij \pm}^s(x;\eta_2^s)+\mathcal O\left(|z-\eta_2^s|^{1/2}\right),\qquad i,j=1,2.
		\end{equation*}
		Also, for fixed $x$,
		\begin{equation*}
			e^{-i(x-x_0^s)p_\pm^s(z)\sigma_3}=e^{-i(x-x_0^s)p_\pm^s(\eta_2^s)\sigma_3}+\mathcal O\left(|z-\eta_2^s|^{1/2}\right).
		\end{equation*}
		It follows from \eqref{eq:hF-regularized-eta2} that
		\begin{equation}\label{eq:regularized-background-separate-limits}
			h_\pm^s(k(z))\Psi_{0\pm}^s(x;z)=\widetilde{\Psi}_{0\pm}^s(x;\eta_2^s)+\mathcal O\left(|z-\eta_2^s|^{1/2}\right).
		\end{equation}
		
		It remains to prove that the two endpoint limits in \eqref{eq:regularized-background-separate-limits} coincide. The endpoint identities for the theta-function quotients give
		\begin{equation}\label{eq:G-endpoint-column-relation}
			G_{12 \pm}^s(x;\eta_2^s)=e^{-2i(x-x_0^s)p_\pm^s(\eta_2^s)}G_{11 \pm}^s(x;\eta_2^s),\qquad
			G_{22 \pm}^s(x;\eta_2^s)=e^{-2i(x-x_0^s)p_\pm^s(\eta_2^s)}G_{21 \pm}^s(x;\eta_2^s).
		\end{equation}
		These identities follow from the endpoint values of the Abel map, the quasi-periodicity of the theta function, and the relation $x_0^s=-\Omega_0^s/\Omega_1^s$, with the endpoint constants and the values $p_\pm^s(\eta_2^s)$ chosen consistently on the two sides. After multiplication by the right diagonal phase factor, the first column of the endpoint matrix is proportional to
		\begin{equation*}
			\begin{pmatrix}
				G_{11 \pm}^s(x;\eta_2^s)\\
				G_{21 \pm}^s(x;\eta_2^s)
			\end{pmatrix}
			e^{-i(x-x_0^s)p_\pm^s(\eta_2^s)},
		\end{equation*}
		while, by \eqref{eq:G-endpoint-column-relation}, the second column equals
		\begin{equation*}
			\begin{pmatrix}
				G_{12 \pm}^s(x;\eta_2^s)\\
				G_{22 \pm}^s(x;\eta_2^s)
			\end{pmatrix}
			e^{i(x-x_0^s)p_\pm^s(\eta_2^s)}
			=
			\begin{pmatrix}
				G_{11 \pm}^s(x;\eta_2^s)\\
				G_{21 \pm}^s(x;\eta_2^s)
			\end{pmatrix}
			e^{-i(x-x_0^s)p_\pm^s(\eta_2^s)}.
		\end{equation*}
		Thus 
		\begin{equation}\label{eq:regularized-background-sigma1}
			\widetilde{\Psi}_{0\pm}^s(x;\eta_2^s)\sigma_1=\widetilde{\Psi}_{0\pm}^s(x;\eta_2^s).
		\end{equation}
		
		On $\Sigma_1^s$, we have
		$
		p_+^s(z)=-p_-^s(z)$ and $Y_+^s(z;x,0)=Y_-^s(z;x,0)(-i\sigma_1),
		$
		which gives
		$
		\Psi_{0 +}^s(x;z)=\Psi_{0 -}^s(x;z)(-i\sigma_1).
		$
		Together with \eqref{eq:h-upper-jump}, this yields
		$
		h_+^s(k(z))\Psi_{0 +}^s(x;z)=h_-^s(k(z))\Psi_{0 -}^s(x;z)\sigma_1.
		$
		Letting $z\to\eta_2^s$ and using \eqref{eq:regularized-background-sigma1}, we find
		\begin{equation*}
			\widetilde{\Psi}_{0 +}^s(x;\eta_2^s)=\widetilde{\Psi}_{0 -}^s(x;\eta_2^s)\sigma_1=\widetilde{\Psi}_{0 -}^s(x;\eta_2^s).
		\end{equation*}
		Denoting this common endpoint value by $\widetilde{\Psi}_0^s(x;\eta_2^s)$ proves \eqref{eq:regularized-background-endpoint}.
	\end{proof}
	\par 
	For $\zeta \in \{ \eta_1^s, \overline{\eta_1^s}, \eta_2^s, \overline{\eta_2^s} \}$, we obtain the exact endpoint relations
	\begin{equation*}
		G_{12 \pm}^s(x;\zeta)
		=
		c_{\zeta \pm}^s e^{-2ixp_{ \pm}^s(\zeta)}G_{11 \pm}^s(x;\zeta),
		\qquad
		G_{22 \pm}^s(x;\zeta)
		=
		c_{\zeta \pm}^s e^{-2ixp_{ \pm}^s(\zeta)}G_{21 \pm}^s(x;\zeta),
	\end{equation*}
	where
	$
	p_{ \pm}^s(\eta_1^s)=p_{ \pm}^s(\overline{\eta_1^s})=0$,
	$
	p_{ \pm}^s(\eta_2^s)=p_{ \pm}^s(\overline{\eta_2^s})
	= \pm \frac{\Omega_1^s}{2},
	$
	and the endpoint-dependent constants are given by
	\[
	c_{\eta_1^s \pm}^s=c_{\overline{\eta_1^s } \pm}^s=1,
	\qquad
	c_{\eta_2^s \pm}^s=c_{\overline{\eta_2^s } \pm}^s
	=e^{\mp i\Omega_0^s}.
	\]
	
	Moreover, recall the estimates in \eqref{eq:p-J-endpoint}. The theta-function quotients are analytic functions of $\mathcal J^s(z)$ in a neighborhood of $\mathcal J^s(\zeta)$. Expanding them about $\mathcal J^s(\zeta)$ gives 
	\begin{equation}\label{eq:H-endpoint-relation-1}
		\begin{aligned}
			G_{12}^s(x;z)-c_\zeta^s e^{-2ixp^s(\zeta)}G_{11}^s(x;z)
			&=
			\mathcal O\bigl(|z-\zeta|^{1/2}\bigr),\\
			G_{22}^s(x;z)-c_\zeta^s e^{-2ixp^s(\zeta)}G_{21}^s(x;z)
			&=
			\mathcal O\bigl(|z-\zeta|^{1/2}\bigr).
		\end{aligned}
	\end{equation}
	Here and below, the boundary-value subscripts on $ c^s_{\zeta\pm} $, $p^s_{\pm}$ and $ G^s_{ij\pm}$ are suppressed when a fixed side is understood.
	These estimates are uniform in $x$, since the theta-function quotients are uniformly bounded in $x$.
	
	Substituting \eqref{eq:H-endpoint-relation-1} into \eqref{eq:K1-endpoint}, we obtain
	\begin{equation*}
		\begin{aligned}
			\mathcal{F}_1^s(x,y;z)={}&c_\zeta^se^{-i(x+y)p^s(\zeta)}
			\begin{pmatrix}G_{11}^s(x;z)\\ G_{21}^s(x;z)\end{pmatrix}
			\begin{pmatrix}G_{21}^s(y;z)&-G_{11}^s(y;z)\end{pmatrix}\\
			&\times\left[e^{-i(x-y)(p^s(z)-p^s(\zeta))}-e^{i(x-y)(p^s(z)-p^s(\zeta))}\right]
			+\mathcal O(|z-\zeta|^{1/2}).
		\end{aligned}
	\end{equation*}
	For boundary values on the spectral contour, $p^s(z)-p^s(\zeta)$ is real, and therefore
	\begin{equation*}
		\left|e^{-i(x-y)(p^s(z)-p^s(\zeta))}-e^{i(x-y)(p^s(z)-p^s(\zeta))}\right|
		\leq 2|x-y|\,|p^s(z)-p^s(\zeta)|
		\leq \mathcal{C}|x-y||z-\zeta|^{1/2}.
	\end{equation*}
	Consequently,
	\begin{equation}\label{eq:K1-estimate}
		\|\mathcal{F}_1^s(x,y;z)\|\leq \mathcal{C}(1+|x-y|)|z-\zeta|^{1/2}.
	\end{equation}
	\par 
	Thus, at each endpoint, the singular one of $h^s(k(z))^2$ and $h^s(k(z))^{-2}$ is cancelled by the factor $|z-\zeta|^{1/2}$ in \eqref{eq:K1-estimate}. Since $\mathcal{F}_0^s$ is uniformly bounded, \eqref{eq:background-propagator-decomposition} yields
	\begin{equation}\label{eq:background-propagator-bound}
		\|\Psi_0^s(x;z)\Psi_0^s(y;z)^{-1}\|\leq \mathcal{C}(1+|x-y|),
	\end{equation}
	where $\mathcal{C}$ is independent of $x$, $y$, and $z$ near the band endpoints. Away from the endpoints, the same estimate follows from the boundedness of the background matrices and their inverses.
	\par 
	Moreover, for $\zeta\in\{\eta_2^s, \overline{\eta_2}^s\}$, since the quantities in \eqref{eq:background-propagator-decomposition} are analytic in the local parameter $(z-\zeta)^{1/2}$, expanding the same expression one order further gives
	\begin{equation}\label{eq:background-transition-difference}
		\left\|
		\Psi_0^s(x;z)\Psi_0^s(y;z)^{-1}
		-
		\Psi_0^s(x;\zeta)\Psi_0^s(y;\zeta)^{-1}
		\right\|
		\leq
		\mathcal{C}
		\left(1+|x-y|^2\right)
		|z-\zeta|^{1/2},
		\qquad z\to\zeta.
	\end{equation}
	Indeed, since $p^s(z)-p^s(\zeta)$ is real on the spectral contour, we have
	\begin{equation*}
		e^{\pm i(x-y)p^s(z)}
		={}e^{\pm i(x-y)p^s(\zeta)}
		\left[
		1\pm i(x-y)\bigl(p^s(z)-p^s(\zeta)\bigr)
		\right]
		+\mathcal O\left(|x-y|^2|z-\zeta|\right).
	\end{equation*}
	It follows directly from the definition of $\mathcal F_0^s$ that
	\begin{equation}\label{eq:F0-endpoint-difference}
		\left\|
		\mathcal F_0^s(x,y;z)-\mathcal F_0^s(x,y;\zeta)
		\right\|
		\leq
		\mathcal C(1+|x-y|)|z-\zeta|^{1/2}.
	\end{equation}
	
	For $\mathcal F_1^s$, the endpoint identities for the theta-function quotients imply the cancellation of its constant term. Expanding \eqref{eq:K1-endpoint} one order further in $(z-\zeta)^{1/2}$ therefore yields
	\begin{equation}\label{eq:F1-normalized-difference}
		\left\|
		(z-\zeta)^{-1/2}\mathcal F_1^s(x,y;z)
		-
		\lim_{w\to\zeta}
		(w-\zeta)^{-1/2}\mathcal F_1^s(x,y;w)
		\right\|
		\leq
		\mathcal C(1+|x-y|^2)|z-\zeta|^{1/2}.
	\end{equation}
	In fact, the first-order variation of the theta-function quotients contributes at most $\mathcal O((1+|x-y|)|z-\zeta|)$ to $\mathcal F_1^s$, while the quadratic remainder in the exponential expansion contributes $\mathcal O(|x-y|^2|z-\zeta|)$. Then \eqref{eq:F1-normalized-difference} gives
	\begin{equation}\label{eq:hfa1}
		\left\|
		h^s(k(z))^{-2}\mathcal F_1^s(x,y;z)
		-
		\lim_{w\to\zeta}
		h^s(k(w))^{-2}\mathcal F_1^s(x,y;w)
		\right\|
		\leq
		\mathcal C(1+|x-y|^2)|z-\zeta|^{1/2}.
	\end{equation}
	Furthermore, by \eqref{eq:K1-estimate},
	\begin{equation}\label{eq:hfa2}
		\left\|
		h^s(k(z))^2\sigma_3\mathcal F_1^s(x,y;z)\sigma_3
		\right\|
		\leq
		\mathcal C(1+|x-y|)|z-\zeta|.
	\end{equation}
	Using \eqref{eq:F0-endpoint-difference}, \eqref{eq:hfa1} and \eqref{eq:hfa2} in \eqref{eq:background-propagator-decomposition}, we have the inequality \eqref{eq:background-transition-difference}. At $\zeta\in\{\eta_1^s,\overline{\eta_1^s}\}$, the same proof applies with the roles of $h^s(k(z))^2$ and $h^s(k(z))^{-2}$ interchanged.
	\par 
	For the left Jost solution, the variation-of-parameters equation is
	\begin{equation}\label{eq:Psi-left-property-4}
		\Psi^\ell(x;z)=\Psi_0^\ell(x;z)+\int_{-\infty}^{x}\Psi_0^\ell(x;z)\Psi_0^\ell(y;z)^{-1}i\sigma_3\Delta Q^\ell(y)\Psi^\ell(y;z)\,dy.
	\end{equation}
	Define $\Psi_{(0)}^\ell(x;z)=\Psi_0^\ell(x;z)$, and, for $n\geq0$,
	\begin{equation*}
		\Psi_{(n+1)}^\ell(x;z)=\int_{-\infty}^{x}\Psi_0^\ell(x;z)\Psi_0^\ell(y;z)^{-1}i\sigma_3\Delta Q^\ell(y)\Psi_{(n)}^\ell(y;z)\,dy.
	\end{equation*}
	As in Subsection \ref{app:proMl},	using \eqref{eq:background-propagator-bound} on the ordered region $-\infty<y_n<\cdots<y_1<x$, we obtain
	\begin{equation*}
		\|\Psi_{(n)}^\ell(x;z)\|
		\leq \sup_{y<x_*}\|\Psi_0^\ell(y;z)\|
		\frac{1}{n!}
		\left[\mathcal{C}(1+|x_*|)\int_{-\infty}^{x_*}(1+|y|)\|\Delta Q^\ell(y)\|\,dy\right]^n.
	\end{equation*}
	Since $\|\Delta Q^\ell(y)\|\leq \mathcal{C}|u(y)-u_0^\ell(y)|$ and $u-u_0^\ell\in L^{1,2}((-\infty,x_*])\subset L^{1,1}((-\infty,x_*])$, the Neumann series converges absolutely. Hence
	\begin{equation*}
		\|\Psi^\ell(x;z)\|\leq \sup_{y<x_*}\|\Psi_0^\ell(y;z)\|
		\exp\left\{\mathcal{C}(1+|x_*|)\|u-u_0^\ell\|_{L^{1,1}((-\infty,x_*])}\right\},\qquad x<x_*.
	\end{equation*}
	\par 
	Since $L^{1,2}([x_*,\infty))\subset L^{1,1}([x_*,\infty))$, the same argument on the right gives
	\begin{equation*}
		\|\Psi^r(x;z)\|\leq \sup_{y>x_*}\|\Psi_0^r(y;z)\|
		\exp\left\{\mathcal{C}(1+|x_*|)\|u-u_0^r\|_{L^{1,1}([x_*,\infty))}\right\},\qquad x>x_*.
	\end{equation*}

	We finally refine the endpoint behavior. We first consider the left Jost solution near $\zeta=\eta_2^\ell$. Multiplying \eqref{eq:Psi-left-property-4} by $h_\pm^\ell(k(z))$ gives
	\begin{equation}\label{eq:regularized-left-Volterra}
		h_\pm^\ell(k(z))\Psi_\pm^\ell(x;z)=h_\pm^\ell(k(z))\Psi_{0 \pm}^\ell(x;z)+\int_{-\infty}^{x}\Psi_{0 \pm}^\ell(x;z)\Psi_{0 \pm}^\ell(y;z)^{-1}i\sigma_3\Delta Q^\ell(y)h_\pm^\ell(k(z))\Psi_\pm^\ell(y;z)\,dy.
	\end{equation}
	Then $\widetilde{\Psi}^\ell(x;\zeta)$ is the unique solution of the limiting Volterra equation
	\begin{equation}\label{eq:limiting-left-Volterra}
		\widetilde{\Psi}^\ell(x;\zeta)=\widetilde{\Psi}_0^\ell(x;\zeta)+\int_{-\infty}^{x}\Psi_0^\ell(x;\zeta)\Psi_0^\ell(y;\zeta)^{-1}i\sigma_3\Delta Q^\ell(y)\widetilde{\Psi}^\ell(y;\zeta)\,dy.
	\end{equation}
	The existence and uniqueness of this solution follow from the same Neumann-series argument used in Subsection \ref{app:proMl}, since $u-u_0^\ell\in L^{1,2}(\mathbb R^-)$. Here, since $\Psi_{0,+}^\ell(x;z)\Psi_{0,+}^\ell(y;z)^{-1} = \Psi_{0,-}^\ell(x;z)\Psi_{0,-}^\ell(y;z)^{-1}$, the two boundary values give rise to the same limiting Volterra equation. Therefore, by the uniqueness of the solution to \eqref{eq:limiting-left-Volterra}, we obtain $\widetilde{\Psi}^\ell(x;\zeta):=
	\widetilde{\Psi}_+^\ell(x;\zeta)
	=
	\widetilde{\Psi}_-^\ell(x;\zeta)$.
	\par 
	Subtracting \eqref{eq:limiting-left-Volterra} from \eqref{eq:regularized-left-Volterra}, we obtain
	\begin{equation*}
		\begin{aligned}
			&h_\pm^\ell(k(z))\Psi_\pm^\ell(x;z)-\widetilde{\Psi}^\ell(x;\zeta)\\
			={}&h_\pm^\ell(k(z))\Psi_{0 \pm}^\ell(x;z)-\widetilde{\Psi}_0^\ell(x;\zeta)\\
			&+\int_{-\infty}^{x}\Psi_{0 \pm}^\ell(x;z)\Psi_{0 \pm}^\ell(y;z)^{-1}i\sigma_3\Delta Q^\ell(y)\left(h_\pm^\ell(k(z))\Psi_\pm^\ell(y;z)-\widetilde{\Psi}^\ell(y;\zeta)\right)\,dy\\
			&+\int_{-\infty}^{x}\left(\Psi_{0 \pm}^\ell(x;z)\Psi_{0 \pm}^\ell(y;z)^{-1}-\Psi_0^\ell(x;\zeta)\Psi_0^\ell(y;\zeta)^{-1}\right)i\sigma_3\Delta Q^\ell(y)\widetilde{\Psi}^\ell(y;\zeta)\,dy.
		\end{aligned}
	\end{equation*}
	Using \eqref{eq:regularized-background-endpoint}, \eqref{eq:background-propagator-bound}, and \eqref{eq:background-transition-difference}, we find, for every fixed $x$,
	\begin{equation*}
		\begin{aligned}
			&\left\|h_\pm^\ell(k(z))\Psi_\pm^\ell(x;z)-\widetilde{\Psi}^\ell(x;\zeta)\right\|\\
			\leq{}&\mathcal C_x|z-\zeta|^{1/2}+\mathcal C\int_{-\infty}^{x}(1+|x-y|)\|\Delta Q^\ell(y)\|\left\|h_\pm^\ell(k(z))\Psi_\pm^\ell(y;z)-\widetilde{\Psi}^\ell(y;\zeta)\right\|\,dy\\
			&+\mathcal C|z-\zeta|^{1/2}\int_{-\infty}^{x}(1+|x-y|^2)\|\Delta Q^\ell(y)\|\|\widetilde{\Psi}^\ell(y;\zeta)\|\,dy.
		\end{aligned}
	\end{equation*}
	Here and below, $\mathcal{C}_x>0$ denotes a constant that may depend on the fixed value of $x$, but is independent of $z$.
	The solution $\widetilde{\Psi}^\ell(y;\zeta)$ is bounded for $y\leq x$. Since, for fixed $x$, $1+|x-y|^2\leq\mathcal C_x(1+|y|^2)$, the assumption $u-u_0^\ell\in L^{1,2}(\mathbb R^-)$ implies $\int_{-\infty}^{x}(1+|x-y|^2)\|\Delta Q^\ell(y)\|\|\widetilde{\Psi}^\ell(y;\zeta)\|\,dy<\infty$. Consequently,
	\begin{equation*}
		\begin{aligned}
			&\left\|h_\pm^\ell(k(z))\Psi_\pm^\ell(x;z)-\widetilde{\Psi}^\ell(x;\zeta)\right\|\\
			\leq{}&\mathcal C_x|z-\zeta|^{1/2}+\mathcal C\int_{-\infty}^{x}(1+|x-y|)\|\Delta Q^\ell(y)\|\left\|h_\pm^\ell(k(z))\Psi_\pm^\ell(y;z)-\widetilde{\Psi}^\ell(y;\zeta)\right\|\,dy.
		\end{aligned}
	\end{equation*}
	Applying the same Volterra iteration estimate as in Subsection \ref{app:proMl} yields
	\begin{equation*}
		\left\|h_\pm^\ell(k(z))\Psi_\pm^\ell(x;z)-\widetilde{\Psi}^\ell(x;\zeta)\right\|
		\leq\mathcal C_x|z-\zeta|^{1/2}\exp\left\{\mathcal C_x\int_{-\infty}^{x}(1+|y|)\|\Delta Q^\ell(y)\|\,dy\right\}.
	\end{equation*}
	The exponential factor is independent of $z$, and hence
	\begin{equation*}
		h_\pm^\ell(k(z))\Psi_\pm^\ell(x;z)=\widetilde{\Psi}^\ell(x;\zeta)+\mathcal O\left(|z-\zeta|^{1/2}\right),\qquad z\to\zeta.
	\end{equation*}

	For $\zeta\in\{\eta_1^\ell,\overline{\eta_1^\ell}\}$, the same argument applies after multiplying the Volterra equation by $h_\pm^\ell(k(z))^{-1}$. The right Jost solution is treated similarly, with the interval $(-\infty,x)$ replaced by $(x,\infty)$ and the assumption $u-u_0^r\in L^{1,2}(\mathbb R^+)$. Consequently, for $s\in\{\ell,r\}$,
	\begin{equation}\label{eq:Psi-regularized-endpoint-expansion}
		\begin{aligned}
			h_\pm^s(k(z))^{-1}\Psi_\pm^s(x;z)&=\widetilde{\Psi}^s(x;\zeta)+\mathcal O\left(|z-\zeta|^{1/2}\right),\qquad \zeta\in\{\eta_1^s,\overline{\eta_1^s}\},\\
			h_\pm^s(k(z))\Psi_\pm^s(x;z)&=\widetilde{\Psi}^s(x;\zeta)+\mathcal O\left(|z-\zeta|^{1/2}\right),\qquad \zeta\in\{\eta_2^s,\overline{\eta_2^s}\}.
		\end{aligned}
	\end{equation}
	Since $h^s(k(z))$ and $h^s(k(z))^{-1}$ have fourth-root behavior at the corresponding endpoints, \eqref{eq:Psi-regularized-endpoint-expansion} gives
	\begin{equation*}
		\Psi^s(x;z)=\mathcal O\left(|z-\zeta|^{-1/4}\right),\qquad z\to\zeta,\qquad z\in\Sigma_1^s\cup\Sigma_2^s.
	\end{equation*}
	Away from the spectral bands, the same estimate holds columnwise in the corresponding slit neighborhoods of the endpoints.

	\subsection{Large-$z$ behavior of $b(z)$ on the real line} \label{app:b-large-z}
	In this subsection, we prove the large-$z$ behavior of the scattering coefficient $b(z)$. Assume that
	\begin{equation*}
		u\in W_{\mathrm{loc}}^{4,1}(\mathbb R),
		\qquad
		u-u_0^\ell\in W^{4,1}(\mathbb{R}^-),
		\qquad
		u-u_0^r\in W^{4,1}(\mathbb{R}^+).
	\end{equation*}
	By property \ref{prop:pro3.3-property-3} of Proposition \ref{pro3.3} and the definition \eqref{eq:defW}, for $s\in\{\ell,r\}$,
	\begin{equation}\label{eq:b-W-expansion}
		W^s(x;z)
		=
		\Psi^s(x;z)
		e^{i(x-x_0^s)k(z)\sigma_3}
		=
		I+\frac{C_1^s(x)}{z}
		+\frac{C_2^s(x)}{z^2}
		+\frac{C_3^s(x)}{z^3}
		+\mathcal{O}(z^{-4}),
		\qquad z\to\infty.
	\end{equation}
	Write the first column of $C_j^s$ as
	\begin{equation*}
		C_j^s(x)e_1
		=
		\begin{pmatrix}
			\xi_j^s(x)\\
			\chi_j^s(x)
		\end{pmatrix},
		\qquad
		\xi_0^s=1,
		\qquad
		\chi_0^s=0.
	\end{equation*}
	It follows from \eqref{eq:b-W-expansion} that the first column of $W^s(x;z)$ is given by
	\begin{equation}\label{eq:b-first-column-W}
		W_1^s(x;z)
		=
		\begin{pmatrix}
			1+\dfrac{\xi_1^s}{z}
			+\dfrac{\xi_2^s}{z^2}
			+\dfrac{\xi_3^s}{z^3}
			+\mathcal{O}(z^{-4})
			\\[2ex]
			\dfrac{\chi_1^s}{z}
			+\dfrac{\chi_2^s}{z^2}
			+\dfrac{\chi_3^s}{z^3}
			+\mathcal{O}(z^{-4})
		\end{pmatrix}.
	\end{equation}
	
	We first derive the relations among these coefficients. By \eqref{eq:appwx},
	\begin{equation*}
		\partial_xW^s
		=
		i\sigma_3QW^s
		-ik(z)[\sigma_3,W^s],
		\qquad
		k(z)=\frac{1}{2}\left(z+z^{-1}\right).
	\end{equation*}
	Taking the first column and substituting \eqref{eq:b-first-column-W}, comparison of the coefficients of equal powers of $z$ gives
	\begin{equation}\label{eq:b-coefficient-recursion}
		\chi_1^s=u,
		\qquad
		\partial_x\xi_j^s
		=
		i\overline{u}\,\chi_j^s, \quad j =1,2,3,
		\qquad
		\chi_{j+1}^s
		=
		-i\partial_x\chi_j^s
		+u\xi_j^s
		-\chi_{j-1}^s, \quad 
		j = 1,2.
	\end{equation}
	In particular,
	\begin{equation}\label{eq:b-beta1}
		\chi_1^\ell=\chi_1^r=u.
	\end{equation}
	Moreover, the second identity in \eqref{eq:b-coefficient-recursion} implies
	\begin{equation}\label{eq:b-alpha1-difference}
		\partial_x
		\left(\xi_1^\ell-\xi_1^r\right)
		=
		i\overline{u}
		\left(\chi_1^\ell-\chi_1^r\right)
		=
		0.
	\end{equation}
	Taking $j=1$ in \eqref{eq:b-coefficient-recursion}, we obtain
	$
	\chi_2^s
	=
	-iu_x+u\xi_1^s,
	$
	and hence
	\begin{equation}\label{eq:b-beta2-difference}
		\chi_2^\ell-\chi_2^r
		=
		u\left(\xi_1^\ell-\xi_1^r\right).
	\end{equation}
	Similarly, taking $j=2$ in \eqref{eq:b-coefficient-recursion} and using \eqref{eq:b-alpha1-difference}, we find
	\begin{align}
		\chi_3^\ell-\chi_3^r
		=
		-i\partial_x
		\left(\chi_2^\ell-\chi_2^r\right)
		+u\left(\xi_2^\ell-\xi_2^r\right)
		-\left(\chi_1^\ell-\chi_1^r\right)
		=
		-iu_x\left(\xi_1^\ell-\xi_1^r\right)
		+u\left(\xi_2^\ell-\xi_2^r\right).
		\label{eq:b-beta3-difference}
	\end{align}
	\par 
	By \eqref{eq:defab} and since
	$\Psi_1^s(x;z)
	=
	e^{-i(x-x_0^s)k(z)}W_1^s(x;z)$, we have
	\begin{equation}\label{eq:b-determinant-W}
		b(z)
		=
		\frac{
			e^{-i(2x-x_0^\ell-x_0^r)k(z)}
		}{
			1-z^{-2}
		}
		\det\left[
		W_1^r(x;z),
		W_1^\ell(x;z)
		\right].
	\end{equation}
	Expanding the determinant in \eqref{eq:b-determinant-W} gives
	\begin{equation*}
		\det\left[
		W_1^r,W_1^\ell
		\right]
		=
		\frac{
			\chi_1^\ell-\chi_1^r
		}{z}+
		\frac{
			\chi_2^\ell-\chi_2^r
			+\xi_1^r\chi_1^\ell
			-\xi_1^\ell\chi_1^r
		}{z^2}+
		\frac{
			\chi_3^\ell-\chi_3^r
			+\xi_1^r\chi_2^\ell
			+\xi_2^r\chi_1^\ell
			-\xi_1^\ell\chi_2^r
			-\xi_2^\ell\chi_1^r
		}{z^3}
		+\mathcal{O}(z^{-4}).
	\end{equation*}
	
	We show that all three displayed coefficients vanish. By \eqref{eq:b-beta1}, the coefficient of $z^{-1}$ is zero. Using \eqref{eq:b-beta1} and \eqref{eq:b-beta2-difference}, the coefficient of $z^{-2}$ becomes
	\begin{equation*}
		\chi_2^\ell-\chi_2^r
		+\xi_1^r\chi_1^\ell
		-\xi_1^\ell\chi_1^r=
		u\left(\xi_1^\ell-\xi_1^r\right)
		+u\xi_1^r-u\xi_1^\ell
		=
		0.
	\end{equation*}
	Furthermore,
	\begin{equation}\label{eq:b-middle-cancellation}
		\xi_1^r\chi_2^\ell
		-\xi_1^\ell\chi_2^r
		=
		\xi_1^r
		\left(-iu_x+u\xi_1^\ell\right)
		-\xi_1^\ell
		\left(-iu_x+u\xi_1^r\right)=
		iu_x
		\left(\xi_1^\ell-\xi_1^r\right).
	\end{equation}
	Therefore, by \eqref{eq:b-beta1}, \eqref{eq:b-beta3-difference}, and \eqref{eq:b-middle-cancellation}, the coefficient of $z^{-3}$ is
	\begin{align*}
		&\chi_3^\ell-\chi_3^r
		+\xi_1^r\chi_2^\ell
		+\xi_2^r\chi_1^\ell
		-\xi_1^\ell\chi_2^r
		-\xi_2^\ell\chi_1^r
		\\
		&\quad=
		-iu_x\left(\xi_1^\ell-\xi_1^r\right)
		+u\left(\xi_2^\ell-\xi_2^r\right)
		+iu_x\left(\xi_1^\ell-\xi_1^r\right)
		+u\left(\xi_2^r-\xi_2^\ell\right)
		=
		0.
	\end{align*}
	Consequently,
	\begin{equation*}
		\det\left[
		W_1^r(x;z),W_1^\ell(x;z)
		\right]
		=
		\mathcal{O}(z^{-4}),
		\qquad
		z\in\mathbb{R},
		\qquad
		|z|\to\infty.
	\end{equation*}
	It follows that
	\begin{equation*}
		b(z)e^{-\frac{i}{2}(x_0^\ell-x_0^r)(z - z^{-1})}
		=
		\mathcal{O}(z^{-4}),
		\qquad
		z\in\mathbb{R},
		\qquad
		|z|\to\infty.
	\end{equation*}

	\section{Classical elliptic travelling wave}\label{subsec:classical-elliptic}
	In this section, we recall the classical travelling-wave reduction for the dNLS equation. 
	\par
	Let $v,\varpi_0,x_0\in\mathbb R$ and set
	\begin{equation}\label{eq:classical-travelling-wave}
		u_0^{\mathrm{cl}}(x,t)
		=
		e^{-i\varpi_0t}
		e^{-i\left(\frac{v}{2}x + \frac{v^2}{4}t\right)}
		\psi(\xi),
		\qquad
		\xi=x + vt-x_0.
	\end{equation}
	Substitution \eqref{eq:classical-travelling-wave} into \eqref{eq:dNLS} gives the equation
	\begin{equation}\label{eq:classical-profile-ode}
		\psi''+(\varpi_0+2)\psi-2|\psi|^2\psi=0.
	\end{equation}
	\par
	Write
	$
	\psi(\xi)=\phi(\xi)e^{i\vartheta(\xi)}$,
	with $\phi(\xi)\geq0.
	$
	Separating the real and imaginary parts of \eqref{eq:classical-profile-ode}, we obtain
	\begin{equation*}
		\phi''-\phi(\vartheta')^2+(\varpi_0+2)\phi-2\phi^3=0,
		\qquad
		2\phi'\vartheta'+\phi\vartheta''=0.
	\end{equation*}
	Then we have
	\begin{equation}\label{eq:classical-phase}
		\vartheta(\xi)=j_0\int_0^\xi\frac{ds}{\phi^2(s)}, \qquad 
		(\phi')^2+\frac{j_0^2}{\phi^2}+(\varpi_0+2)\phi^2-\phi^4=C_0,
	\end{equation}
	where $j_0, C_0 \in \mathbb{R}$.
	With
	$
	\mathfrak{F}(\xi):=\phi^2(\xi),
	$
	the second equation in \eqref{eq:classical-phase} becomes
	\begin{equation}\label{eq:classical-cubic}
		(\mathfrak{F}')^2
		=
		4\left(\mathfrak{F}^3-(\varpi_0+2)\mathfrak{F}^2+C_0\mathfrak{F}-j_0^2\right).
	\end{equation}
	Suppose that the cubic polynomial in \eqref{eq:classical-cubic} has three real roots
	$
	\rho_1>\rho_2>\rho_3\geq0.
	$
	Then
	\begin{equation*}
		\varpi_0+2=\rho_1+\rho_2+\rho_3,
		\qquad
		C_0=\rho_1\rho_2+\rho_1\rho_3+\rho_2\rho_3,
		\qquad
		j_0^2=\rho_1\rho_2\rho_3,
	\end{equation*}
	and the periodic branch is characterized by $\rho_3\leq \mathfrak{F}\leq\rho_2$. It is given explicitly by
	\begin{equation}\label{eq:classical-dn-profile}
		\mathfrak{F}(\xi)
		=
		\rho_1-(\rho_1-\rho_3)
		\dn^2\left(
		\sqrt{\rho_1-\rho_3}\,\xi-K(m_1);m_1
		\right),
		\qquad
		m_1=\frac{\rho_2-\rho_3}{\rho_1-\rho_3},
	\end{equation}
	which coincides with \eqref{eq:modu0}. The modulus $|u_0^{\mathrm{cl}}|^2=\mathfrak{F}$ is periodic with period
	$
	L=\frac{2K(m_1)}{\sqrt{\rho_1-\rho_3}}.
	$
	\par
	We next relate the turning points $\rho_j$ to the branch points of the spectral curve.
	\par 
	Introduce
	$
	\widehat{Q}(\xi)=
	\begin{pmatrix}
		0&\overline{\psi(\xi)}\\
		\psi(\xi)&0
	\end{pmatrix}.
	$
	If $\Phi_0$ is written in the form
	\begin{equation*}
		\Phi_0(x,t;k)
		=
		e^{\frac{i}{8} (4\varpi_0 t + 2 v x + v^2 t ) \sigma_3 }\mathcal{P}(\xi;\lambda)
		e^{-2i\mathcal R(\lambda)t\sigma_3},
		\qquad
		\lambda=k+\frac{v}{4},
	\end{equation*}
	then the spatial part of the Lax pair becomes
	$
	\mathcal{P}_\xi
	=
	i\sigma_3(\widehat{Q}-\lambda I)\mathcal{P}.
	$
	The time part reduces to the algebraic equation
	\begin{equation}\label{eq:stationary-time-operator}
		\left(2\lambda\sigma_3\widehat{Q}
		-2\lambda^2\sigma_3
		-(\widehat{Q}^2-I)\sigma_3
		+i(\widehat{Q})_\xi
		+\frac{\varpi_0}{2}\sigma_3\right) \mathcal{P}
		=
		2\mathcal R(\lambda)\mathcal{P}\sigma_3.
	\end{equation}
	Using \eqref{eq:classical-phase}, the determinant of \eqref{eq:stationary-time-operator} gives
	\begin{equation}\label{eq:classical-spectral-curve}
		\mathcal R(\lambda)^2
		=
		\lambda^4
		-\frac{\varpi_0+2}{2}\lambda^2
		-j_0\lambda
		+\frac{(\varpi_0+2)^2-4C_0}{16}.
	\end{equation}
	Thus the spectral curve obtained from the classical travelling-wave reduction is a two-sheeted curve with four branch points.
	\par
	In the $k$-plane used in the Riemann--Hilbert construction, these branch points are
	$
	-1$, $\re\eta_2$, $\re\eta_1$ and $1.
	$
	After the shift $\lambda=k+v/4$, with $v=-\re(\eta_1+\eta_2)$, their sum is zero, consistently with the absence of the cubic term in \eqref{eq:classical-spectral-curve}. Comparing the coefficients of the two quartic polynomials and substituting them into the cubic in \eqref{eq:classical-cubic}, one obtains the factorization
	\begin{equation*}
		\begin{aligned}
			&\mathfrak{F}^3-(\varpi_0+2)\mathfrak{F}^2+C_0\mathfrak{F}-j_0^2\\
			&\quad=
			\left[\mathfrak{F}-\frac{(2+\re\eta_1-\re\eta_2)^2}{4}\right]
			\left[\mathfrak{F}-\frac{(2-\re\eta_1+\re\eta_2)^2}{4}\right]
			\left[\mathfrak{F}-\frac{(\re\eta_1+\re\eta_2)^2}{4}\right].
		\end{aligned}
	\end{equation*}
	Consequently, the turning points are
	\begin{equation}\label{eq:rho-eta-relations}
		\rho_1=\frac{(2+\re\eta_1-\re\eta_2)^2}{4},
		\qquad
		\rho_2=\frac{(2-\re\eta_1+\re\eta_2)^2}{4},
		\qquad
		\rho_3=\frac{(\re\eta_1+\re\eta_2)^2}{4}.
	\end{equation}
	Since $0<\arg\eta_1<\arg\eta_2<\pi$, we have $\re\eta_1>\re\eta_2$, and hence $\rho_1>\rho_2>\rho_3\geq0$. Moreover, \eqref{eq:rho-eta-relations} also gives the elliptic modulus in \eqref{eq:classical-dn-profile} is exactly
	$
	m_1
	=
	\frac{(1-\re\eta_1)(1+\re\eta_2)}{(1+\re\eta_1)(1-\re\eta_2)}.
	$

	\section{An explicit example with pure-step elliptic backgrounds}
	\label{app:pure-step-elliptic-example}
	Fix two genus-one backgrounds determined by the spectral endpoints
	$\eta_1^s,\eta_2^s\in\mathbb T\cap\mathbb C^+$, $s\in\{\ell,r\}$, and assume that the eight band endpoints are pairwise distinct. For simplicity, we take
	$x_0^\ell=x_0^r=0$ and consider the pure-step initial datum
	\begin{equation}\label{eq:pure-step-initial-data}
		u_{\mathrm{ps}}(x)
		=
		\begin{cases}
			u_0^\ell(x),&x\leq0,\\
			u_0^r(x),&x>0.
		\end{cases}
	\end{equation}
	Since $x=t=x_0^s=0$, the expression \eqref{solF} can be simplified to
	\begin{equation*}
		F^s(k(z);0,0)
		=
		\frac12
		\begin{pmatrix}
			h^s+(h^s)^{-1}&(h^s)^{-1}-h^s\\
			(h^s)^{-1}-h^s&h^s+(h^s)^{-1}
		\end{pmatrix},
	\end{equation*}
	where $h^s=h^s(k(z))$ given by \eqref{eq:h-k}. For the datum \eqref{eq:pure-step-initial-data}, the Jost solutions coincide with the corresponding background solutions on their normalization half-lines:
	\begin{equation*}
		\Psi^\ell(x;z)=\Psi_0^\ell(x;z),\quad x\leq0,
		\qquad
		\Psi^r(x;z)=\Psi_0^r(x;z),\quad x\geq0.
	\end{equation*}
	Evaluating the scattering relation \eqref{eq:scat} at $x=0$ gives
	\begin{equation*}
		S(z)
		=
		\Psi_0^r(0;z)^{-1}\Psi_0^\ell(0;z)
		=
		F^r(k(z);0,0)^{-1}F^\ell(k(z);0,0)
		=\frac12
		\begin{pmatrix}
			\Lambda(z)+\Lambda(z)^{-1}&\Lambda(z)-\Lambda(z)^{-1}\\
			\Lambda(z)-\Lambda(z)^{-1}&\Lambda(z)+\Lambda(z)^{-1}
		\end{pmatrix},
	\end{equation*}
	where
	\begin{equation*}
		\Lambda(z)
		:=
		\frac{h^r(k(z))}{h^\ell(k(z))}
		=
		\left[
		\frac{
			(z-\eta_2^r)(z-\overline{\eta_2^r})
			(z-\eta_1^\ell)(z-\overline{\eta_1^\ell})
		}{
			(z-\eta_1^r)(z-\overline{\eta_1^r})
			(z-\eta_2^\ell)(z-\overline{\eta_2^\ell})
		}
		\right]^{1/4},
	\end{equation*}
	with the normalization $\Lambda(z)\to1$ as $z\to\infty$. Consequently,
	\begin{equation*}
		a(z)=\frac12\left(\Lambda(z)+\Lambda(z)^{-1}\right),
		\qquad
		b(z)=\frac12\left(\Lambda(z)-\Lambda(z)^{-1}\right).
	\end{equation*}
	\par 
	By using the fourth-root endpoint singularities of $a(z)$ and $b(z)$, we obtain the following propositions.
	\begin{pro}\label{pro:pure-step-gamma-a12-endpoints}
		Assume that $a(z)$ has no zeros in $\mathbb C^+\setminus(\Sigma_1^\ell\cup\Sigma_1^r)$. The scalar function $\gamma(z)$ defined by \eqref{eq:gamma} has the endpoint behavior
		\begin{equation*}
			\begin{aligned}
				\gamma(z)&=(z-\zeta)^{-1/4}\gamma_\zeta(z),
				&&\zeta\in\left\{\eta_1^\ell,\eta_2^\ell,\overline{\eta_1^r},\overline{\eta_2^r}\right\},\\
				\gamma(z)&=(z-\zeta)^{1/4}\gamma_\zeta(z),
				&&\zeta\in\left\{\eta_1^r,\eta_2^r,\overline{\eta_1^\ell},\overline{\eta_2^\ell}\right\}.
			\end{aligned}
		\end{equation*}
		Here $\gamma_\zeta$ is analytic and nonvanishing in each component of a slit neighborhood of $\zeta$ and admits a finite nonzero non-tangential limit at $\zeta$ from each component.
	\end{pro}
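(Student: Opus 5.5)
In the pure-step case everything is explicit, so I would compute the jump densities in \eqref{eq:gamma} directly from the closed forms $a=\frac12(\Lambda+\Lambda^{-1})$ and $b=\frac12(\Lambda-\Lambda^{-1})$, and then read off the endpoint singularities of $\log\gamma$. The first step is to express $b_1,b_2$ through $\Lambda$. Because $S(z)=F^r(k(z);0,0)^{-1}F^\ell(k(z);0,0)$ holds identically, the band coefficients are the boundary values of the same entries: $b_{1\pm}=\frac12(\Lambda_\pm-\Lambda_\pm^{-1})$ on $\Sigma_1^r$, and, via the $(1,2)$ entry, $b_{2\pm}=\frac12(\Lambda_\pm-\Lambda_\pm^{-1})$ on $\Sigma_1^\ell$. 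On the bands, $h^s_+=ih^s_-$ gives $\Lambda_+=i\Lambda_-$ on $\Sigma_1^r\setminus\Sigma_1^\ell$, $\Lambda_+=-i\Lambda_-$ on $\Sigma_1^\ell\setminus\Sigma_1^r$, and $\Lambda_+=\Lambda_-$ on the overlap. This yields explicit densities; for instance on $\Sigma_1^r\setminus\Sigma_1^\ell$,
\[
-\frac{b_{1+}}{b_{1-}}=-\frac{i\Lambda_--(i\Lambda_-)^{-1}}{\Lambda_--\Lambda_-^{-1}}=-i\,\frac{\Lambda_-^2+1}{\Lambda_-^2-1}.
\]
The other arcs are analogous, and the lower bands follow by Schwarz symmetry.

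The second step is the local analysis at an endpoint, say $\zeta=\eta_1^r$ with $\zeta\notin\Sigma_1^\ell$. Since $\Lambda\sim c(z-\zeta)^{-1/4}$, we have $\Lambda_-^2\to\infty$, so the density tends to $-i$, with logarithm $-i\pi/2$, and a correction analytic in $(z-\zeta)^{1/2}$ that is $O(|z-\zeta|^{1/2})$. At $\eta_2^r$ we have instead $\Lambda^2\to0$, so the density tends to $+i$. The same dichotomy occurs on the $\ell$-bands and on the real axis, where $\log|a|^2$ stays bounded away from the band endpoints. Then I would use the standard endpoint expansion of a Cauchy integral with a H\"older density $g$: near an endpoint, $\frac{1}{2\pi i}\int g(s)(s-z)^{-1}ds=\mp\frac{g(\zeta)}{2\pi i}\log(z-\zeta)+\text{analytic in the slit disc}+O(|z-\zeta|^{1/2})$. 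This reproduces the exponent $\pm\frac14$ found in Proposition~\ref{prop:gamma-properties}. The sign of each exponent is fixed by the orientation of the arc (initial versus terminal endpoint) together with the limiting value $\pm i\pi/2$. I would check all eight endpoints against the two lists in the statement.

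For an endpoint lying inside the other background's band (configurations (d)--(f)), I would follow the splicing trick already used in the proof of Proposition~\ref{prop:gamma-properties}. Adding the constant $\mp i\pi/2$ on the non-overlap part makes the density continuous across $\zeta$. The resulting jump discontinuity produces an explicit $\frac14\log$ term, and the remaining continuous part contributes no logarithm.

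The main obstacle is to upgrade ``bounded'' to ``analytic and nonvanishing with a nonzero limit''. Away from the contour, $\gamma_\zeta=\exp(\cdots)$ is automatically nonvanishing. The finite nonzero limit requires the remainder Cauchy integral to have a finite limit, which holds because the remaining density is H\"older-$\frac12$ at $\zeta$: this follows from the explicit $(z-\zeta)^{1/2}$-analytic form of $\Lambda$. I also need the branch of the logarithm to be continuous along each arc, which means ruling out zeros of the densities, i.e.\ $\Lambda_-^2\neq\pm1$ on the bands. My plan is to argue that on the unit circle $\Lambda_-^2$ is purely imaginary on the non-overlap parts, since $|\Lambda|^4$ is real there, so $\Lambda_-^2=\pm1$ cannot occur. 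On the overlap I would instead invoke the no-zero hypothesis on $a$. Finally, the real-axis term $\log|a|^2$ is harmless near $\zeta$, since $\zeta\notin\mathbb R$.
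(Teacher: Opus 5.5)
Your route is the paper's own argument written out for the pure-step case. The paper gives no separate proof here; it defers to the endpoint analysis and splicing in the proof of Proposition~\ref{prop:gamma-properties}. The gap, in your plan and in that argument alike, is the branch bookkeeping.

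\textbf{What is actually forced.} A nonvanishing density only gives you \emph{some} continuous logarithm on each arc. The values at the two ends of an arc are then tied together, so the exponents are not fixed endpoint by endpoint by ``orientation plus limiting value $\pm i\pi/2$''. What is forced is a telescoping identity. On a connected component of $\Sigma_1^\ell\cup\Sigma_1^r$, the exponents of $\gamma$ at its endpoints sum to $\frac{1}{2\pi i}\sum[\log D]$, taken over the pieces on which the density $D$ is continuous. This sum does not depend on the branches chosen.

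\textbf{Computing it in the pure-step case.}
\begin{enumerate}
\item On the overlap, $\Lambda_+=\Lambda_-$, so $D=-b_{1+}/b_{1-}\equiv-1$. The no-zero hypothesis on $a$ plays no role there. What matters is whether you take $\log(-1)=i\pi$ or $-i\pi$.
\item On $\mathbb T$ one has $\Lambda^4=\frac{(k-\operatorname{Re}\eta_2^r)(k-\operatorname{Re}\eta_1^\ell)}{(k-\operatorname{Re}\eta_1^r)(k-\operatorname{Re}\eta_2^\ell)}$ with $k=\operatorname{Re}z$. This is negative on the non-overlap parts, and that sign is the fact you need; ``$|\Lambda|^4$ real'' is vacuous.
\item Using outer boundary values of the square roots, $\Lambda_-^2\in -i(0,\infty)$ on $\Sigma_1^r\setminus\Sigma_1^\ell$ and $\Lambda_-^2\in i(0,\infty)$ on $\Sigma_1^\ell\setminus\Sigma_1^r$.
\item Hence $|D|\equiv 1$ everywhere. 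On each non-overlap piece, $D$ stays in the open right half-plane and runs between $-i$ (where $\Lambda\to\infty$) and $i$ (where $\Lambda\to0$).
\end{enumerate}
With these facts your argument is correct in the two disjoint configurations and in the two interleaved ones. Use the principal logarithm off the overlap. On the overlap take $\log(-1)=-i\pi$ when $\theta_1^\ell<\theta_1^r<\theta_2^\ell<\theta_2^r$, and $\log(-1)=i\pi$ when $\theta_1^r<\theta_1^\ell<\theta_2^r<\theta_2^\ell$.

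\textbf{The nested configurations.} Here no branch choice works, and the statement itself is false. Take $\theta_1^r<\theta_1^\ell<\theta_2^\ell<\theta_2^r$.
\begin{enumerate}
\item Both pieces $(\eta_1^r,\eta_1^\ell)$ and $(\eta_2^\ell,\eta_2^r)$ of $\Sigma_1^r\setminus\Sigma_1^\ell$ run from $\Lambda=\infty$ to $\Lambda=0$. Each therefore contributes $\Delta\arg D=\pi$, and the overlap contributes $0$.
\item So the exponents at $\eta_1^r,\eta_1^\ell,\eta_2^\ell,\eta_2^r$ must sum to $1$. The claimed values $\frac14,-\frac14,-\frac14,\frac14$ sum to $0$.
\item In your splicing language, continuity at $\eta_1^\ell$ forces $\log(-1)=i\pi$ on the overlap, while continuity at $\eta_2^\ell$ forces $\log(-1)=-i\pi$. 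Either way, one of $\eta_1^\ell,\eta_2^\ell$ gets $\gamma\sim(z-\zeta)^{3/4}$, so $\gamma_\zeta\to0$ there.
\item Symmetrically, when $\Sigma_1^r\subset\Sigma_1^\ell$ the sum is $-1$, and an endpoint of $\Sigma_1^r$ gets exponent $-\frac34$.
\end{enumerate}
The pure-step datum satisfies the hypotheses of Proposition~\ref{prop:gamma-properties}. So the same computation shows that its step ``the other endpoints are treated in the same way'' fails for these two configurations.

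What you can prove is the claim for the four non-nested configurations. In the nested ones, the stated exponents hold only after modifying \eqref{eq:gamma}. For example, multiply $\gamma$ by the jump-free factor $\bigl(\eta\frac{z-\overline{\eta}}{z-\eta}\bigr)^{\pm1}$, where $\eta$ is the offending endpoint. This factor respects both symmetries in \eqref{eq:gammsy1} and shifts the exponents at $\eta$ and $\overline{\eta}$ by $\mp1$ and $\pm1$.
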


	\begin{pro}\label{pro:pure-step-r12-endpoints}
		$r_1(z)$ has square-root zeros at the endpoints $\eta_j^\ell$ of $\Sigma_1^\ell$, while $r_2(z)$ has square-root zeros at the endpoints $\eta_j^r$ of $\Sigma_1^r$, for $j=1,2$.
	\end{pro}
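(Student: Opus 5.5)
Since the eight endpoints are pairwise distinct, I will use the second part of Proposition~\ref{prop:new-scattering-coefficients}. Once its nondegeneracy hypothesis holds, it gives $r_1(z)=\kappa_j^\ell(z-\eta_j^\ell)^{1/2}(1+o(1))$ and $r_2(z)=\kappa_j^r(z-\eta_j^r)^{1/2}(1+o(1))$ with $\kappa\neq0$, which is exactly the claim. So the whole proof reduces to checking, in the explicit pure-step model, that
\[
d_j^\ell=\lim_{z\to\eta_j^\ell}(z-\eta_j^\ell)^{1/4}\bigl(a_{2+}a_{1-}-ib_{2-}\bigr)\neq0,
\qquad
d_j^r=\lim_{z\to\eta_j^r}(z-\eta_j^r)^{1/4}\bigl(a_{2-}a_{1+}+ib_{1-}\bigr)\neq0 .
\]
The endpoint behaviour of $\gamma$ from Proposition~\ref{pro:pure-step-gamma-a12-endpoints} (equivalently \eqref{eq:endpoint-gamma}) will be taken as given. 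It fixes the constants $e^{C_\zeta}$ entering $a_1/a_2$, and it shows that exactly one of $a_1,a_2$ is singular at each endpoint.

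First I localise. Near $\eta_j^\ell$, with $\eta_j^\ell\notin\overline{\Sigma_1^r}$, the band point is locally in $\Sigma_1^\ell\setminus\Sigma_1^r$. There \eqref{eq:r1r2setm} applies and gives $a_{2+}a_{1-}-ib_{2-}=2a_{2+}a_{1-}$. Then $d_j^\ell\neq0$ is equivalent to $\lim (z-\eta_j^\ell)^{1/4}a_{2+}(z)\neq0$, because $a_1$ is analytic, finite and nonvanishing there (Proposition~\ref{prop:new-scattering-coefficients}, item 1, together with Assumption~\ref{ass:no-discrete-spectrum} and the nonvanishing of $\gamma_\zeta$). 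Now $a_2^2=a\gamma e^{\frac{i}{2}(x_0^\ell-x_0^r)(z-z^{-1})}$ and $\gamma\sim e^{C_\zeta}(z-\zeta)^{-1/4}$. It therefore suffices that $a(z)$ stays bounded and nonzero at $\eta_j^\ell$.

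To check that, I use the explicit formula $a=\tfrac12(\Lambda+\Lambda^{-1})$. At $\eta_1^\ell$ the factor $\Lambda$ vanishes like $(z-\eta_1^\ell)^{1/4}$, and at $\eta_2^\ell$ it blows up like $(z-\eta_2^\ell)^{-1/4}$. So $a$ actually has a genuine $(z-\zeta)^{-1/4}$ singularity, and I need to recount the powers rather than rely on the heuristic above. The corrected count is as follows. Since $a=a_1a_2$ with $a_1$ regular, $a_2\sim c\,(z-\zeta)^{-1/4}$ with $c\neq0$ exactly when $(z-\zeta)^{1/4}a\to$ a nonzero limit. That limit equals $\tfrac12\lim(z-\zeta)^{1/4}\Lambda^{\mp1}$, which is a nonzero explicit product of the distances from $\zeta$ to the other seven endpoints. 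This is consistent with $a\gamma\sim(z-\zeta)^{-1/2}$, and it yields $d_j^\ell=2\lim(z-\zeta)^{1/4}a_{2+}a_{1-}\neq0$, with the boundary values taken on the appropriate side.

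The overlap case is when $\eta_j^\ell\in\Sigma_1^r$, which happens when the bands interlace. There the reduction \eqref{eq:r1r2setm} fails, and I must use the full expression together with the identity from the positivity proof,
\[
\Bigl(\tfrac{a_{2+}a_{1-}}{ib_{2-}}\Bigr)^2=1+|b_{1-}|^{-2}.
\]
Write $X=a_{2+}a_{1-}/(ib_{2-})$. Then $a_{2+}a_{1-}-ib_{2-}=ib_{2-}(X-1)$, where $X$ is real with $|X|>1$, and $X\to\pm1$ only if $b_{1-}\to\infty$. Near an $\ell$-endpoint that lies inside $\Sigma_1^r$, $b_{1-}$ carries the $\ell$-singularity, so $X\to\pm1$. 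The nondegeneracy then hinges on the sign: if $X\to-1$, then $X-1\to-2$ and we are fine. I will settle this sign from the explicit $\Lambda$, $b=\tfrac12(\Lambda-\Lambda^{-1})$ and the phases $h_+=ih_-$. This step, showing that the limit is $-1$ rather than $+1$ (equivalently, that $(z-\zeta)^{1/4}b_{2-}$ has a nonzero limit), is the main obstacle. If the sign check fails, I would instead compute $d_j^\ell$ directly from the Jost determinants at $x=0$, using $F^s(k;0,0)$. The $r_2$ statements follow by the symmetric argument with $\ell\leftrightarrow r$.
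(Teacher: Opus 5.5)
\textbf{The overlap case is left open, and it cannot be closed in every configuration.}

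Your treatment of endpoints in $\Sigma_1^\ell\setminus\overline{\Sigma_1^r}$ is correct once you drop the first heuristic. There $a_{2+}a_{1-}-ib_{2-}=2a_+$, and $(z-\zeta)^{1/4}a_+$ has an explicit nonzero limit coming from $\Lambda$, so $d_j^\ell\neq0$. This agrees with what the paper intends: its only justification is to insert the fourth-root singularities of $a$ and $b$ into Proposition~\ref{prop:new-scattering-coefficients}.

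Here is what happens on the overlap in the pure-step model.
\begin{itemize}
\item The functions $h^\ell$ and $h^r$ both jump by the factor $i$ on $\Sigma_1^\ell\cap\Sigma_1^r$. Hence $\Lambda$ is continuous and real (in fact positive) there.
\item Consequently $a_\pm=a$ and $b_{1\pm}=b_{2\pm}=b$ are real, $\gamma_+/\gamma_-=-1$, and $(a_{2+}/a_{2-})^2=-1$.
\item So $a_{2+}/a_{2-}\in\{\pm i\}$ is constant on each overlap component. It follows that $X=\pm a/b$ with one fixed sign, and $|r_1|=|\Lambda|^{\pm1}/|\gamma_-|$.
\end{itemize}

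\textbf{Interlaced configurations: the sign can be settled.} Here $|\Lambda|$ tends to $0$ at both ends of the overlap, or to $\infty$ at both ends. The densities in \eqref{eq:gamma} are purely imaginary on the arcs and real and inversion-symmetric on $\mathbb{R}$. Therefore $\log\gamma(z)=\overline{\log\gamma(\bar z^{-1})}$ holds exactly, and $\gamma>0$ on the gaps of $\mathbb{T}$. Now follow $\arg\gamma_-$ along the outer side of $\mathbb{T}$, through the endpoint exponents $\pm\frac14$. This gives $\gamma_-\in -i\mathbb{R}_+$ or $\gamma_-\in i\mathbb{R}_+$ on the overlap, depending on the configuration. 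In each case this is exactly what is needed for $X\to-1$. This phase-tracking step is what your proposal is missing.

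\textbf{Nested configuration $\Sigma_1^\ell\subset\Sigma_1^r$: the sign check must fail.} Here $\Lambda$ runs from $0$ at $\eta_1^\ell$ to $\infty$ at $\eta_2^\ell$, so $a/b\to-1$ at one end and $a/b\to+1$ at the other. Since the sign in $X=\pm a/b$ is fixed along the overlap, $X\to+1$ at one of the two endpoints, and $d_j^\ell=0$ there. Taking $|\gamma|\asymp|z-\zeta|^{-1/4}$ as given, $|r_1|=|\Lambda|^{\pm1}/|\gamma_-|$ then has a finite nonzero limit at that endpoint.

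No choice of logarithm branches in \eqref{eq:gamma} repairs this. The local jump values force the exponents of $|\gamma|$ at $\eta_1^\ell$ and $\eta_2^\ell$ to agree modulo $1$. Square-root vanishing of $r_1$ at both ends would instead require them to differ by $\frac12$. By the same argument, $r_2$ fails at one endpoint when $\Sigma_1^r\subset\Sigma_1^\ell$.

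Your fallback of computing $d_j^\ell$ from the Jost determinants at $x=0$ does not help, because $d_j^\ell$ depends on $\gamma$ through $a_1$ and $a_2$. The paper's one-line justification also passes over this point.

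Completed by the phase-tracking argument, your route proves the statement for the disjoint and interlaced configurations only. In the nested configurations the statement as written does not hold.
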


	\begin{rmk}
		The datum \eqref{eq:pure-step-initial-data} is generally discontinuous at $x=0$. Although the two half-line perturbations vanish identically, the condition $u_{\mathrm{ps}}\in W_{\mathrm{loc}}^{4,1}(\mathbb R)$ fails unless the left and right backgrounds match at the interface through the required derivatives. Thus the decay $b(z)=\mathcal O(z^{-1})$ as $z\to\infty$ in this example does not contradict the fourth-order decay obtained under the additional assumption $u\in W_{\mathrm{loc}}^{4,1}(\mathbb R)$.
	\end{rmk}

	\section*{Acknowledgments}
	The author would like to thank Xiaodong Zhu and Zechuan Zhang for their helpful discussions and valuable suggestions.

	\bibliographystyle{amsplain}

\end{document}